\numberwithin{equation}{section}
\newtheorem{thm}{Theorem}[section]
\newtheorem{prop}[thm]{Proposition}
\newtheorem{lemm}[thm]{Lemma}
\newtheorem{defn}[thm]{Definition}
\newtheorem{rem}[thm]{Remark}
\newcommand{\BBB}{\mathbb}
\newcommand{\R}{{\BBB R}}
\newcommand{\Z}{{\BBB Z}}
\newcommand{\N}{{\BBB N}}
\newcommand{\C}{{\BBB C}}
\newcommand{\ee}{\mbox{\boldmath $1$}}
\newcommand{\disp}{\displaystyle}
\newcommand{\bvec}[1]{\mbox{\boldmath $#1$}}
\newcommand{\htype}{{\mathbb H}^d_p}
\newcommand{\G}{{\mathbb G}}
\DeclareMathOperator{\esssup}{ess\sup}
\title{Strichartz estimates and its application to the well-posedness of the nonlinear Schr\"odinger equations on H-type groups}
\author{HIroyuki HIRAYAMA and Yasuyuki OKA
}
\date{}
\begin{document}
\maketitle
\footnote{2020 Mathematics Subject classification: 35R03, 35Q55\\
Keyword: H-type group, nonlinear Sch\"odinger equation, Strichartz estimate, well-posedness}
\begin{abstract}
The aim of this article is to give the well-posedness results
for the Cauchy problem of the nonlinear Schr\"odinger equation with power type nonlinearities on H-type groups. 
To do this, we prove the dispersive estimate and Strichartz estimate.  
Although these estimates are given by Hierro (2005), its complete proofs cannot be find. 
We correct the statement of these estimates, give the proofs, and apply to the nonlinear problem. 
Our well-posedness results are an improvement of the previous result by Bruno {\it et al}. 
\end{abstract}
\section{Introduction}
\noindent

We consider the Cauchy problem of the nonlinear Schr\"odinger equation (NLS for short)
\begin{equation}\label{HNLS}
\begin{cases}
i\partial_tu+\mathcal{L}u=\mu |u|^{\alpha -1}u,\ \ t>0,\ g\in \htype,\\
u(0,g)=u_0(g),\ \ g\in \htype,
\end{cases}
\end{equation}
where $\htype$ denotes the H-type group with the center dimension $p$ 
and the homogeneous dimension $N=2d+2p$,
$\mathcal{L}$ is a sub-Laplacian, and $\mu \in \C\setminus \{0\}$ is a constant. 
We give the initial data $u_0$ in 
the $L^2$-based Sobolev spaces $H^s(\htype)$. 
The definition of H-type group, sub-Laplacian, 
and Sobolev spaces on H-type group will be given in the next section. 
The typical example of H-type group is the Heisenberg group $\mathbb{H}^d_1$, 
which is H-type group with the center dimension $p=1$. 

On the Euclidean space $\R^d$, 
there are a lot of works for the well-posedness of NLS. 
In particular, the well-posedness on $H^s(\R^d)$ for $s\ge \max\{0,s_c\}$ 
was proved in \cite{Dinh17} (see also \cite{CW90}) under the assumption 
\begin{equation}\label{smooth_cond_nonl}
\alpha\ \text{is\ an\ odd\ integer},\ \text{or}\ \alpha\ge \lceil s\rceil+1,
\end{equation}
where $s_c=\frac{d}{2}-\frac{2}{p-1}$ is the scaling critical Sobolev exponent, 
and $\lceil s\rceil=\min\{n\in \Z\mid n\ge s\}$. 
The assumption {\rm (\ref{smooth_cond_nonl})} is 
a regularity condition for the nonlinear term $f(u)=\mu |u|^{\alpha-1}u$ 
to guarantee the continuous dependence of solution on initial data on Sobolev space. 
When $0\le s \le 2$ and $s<\frac{d}{2}$, 
the assumption {\rm (\ref{smooth_cond_nonl})} was removed 
(see, \cite{CFH11}, \cite{GV79}, \cite{Kato87}, \cite{Kato89}, \cite{Tsu87}, \cite{UW12}). 

The Strichartz estimate is a very useful tool to obtain the well-posedness 
of nonlinear dispersive equation such as NLS. 
On $\R^d$, it is known that the Strichartz estimate
\[
\|u\|_{L^q_t(\R;L^r_x(\R^d))}\le C\|u_0\|_{L^2_x(\R^d)}
\]
with admissible pair $(q,r)\in [2,\infty]^2$ such as
\[
\frac{2}{q}=d\left(\frac{1}{2}-\frac{1}{r}\right),\ \ (q,r,d)\ne (2,\infty,2)
\]
holds for the solution $u=u(t,x)$ to the linear Schr\"odinger equation 
(see, \cite{CW90}, \cite{GV92}, \cite{KT98}, and \cite{Ya87}). 
The dispersive estimate, which is the time decay estimate such as 
\begin{equation}\label{dip_est_intro}
\|u(t)\|_{L^p}\le C|t|^{-\alpha}\|u_0\|_{L^{p'}}
\end{equation}
with $\frac{1}{p}+\frac{1}{p'}=1$ for the solution $u$ to the linear Schr\"odinger equation, 
plays an important role to obtain the Strichartz estimates. 
However, in \cite{Bah}, Bahouri {\it et al.} showed that the existence 
of a function $u_0$ in Schwartz class on the Heisenberg group $\mathbb{H}^d_1$  such that the 
solution $u$ to the linear Schr\"odinger equation with initial data $u_0$ satisfies
\begin{equation}\label{heis_sol_trans}
u(t,z,\eta)=u_0(z,\eta +4td)\ (g=(z,\eta)\in \mathbb{H}^d_1).
\end{equation}
This means that the dispersive estimate such as {\rm (\ref{dip_est_intro})} 
does not hold generally on $\mathbb{H}^d_1$ because {\rm (\ref{heis_sol_trans})} implies 
$\|u(t)\|_{L^p}=\|u_0\|_{L^p}$ for any $t\in \R$.  
On the other hand, in the case $p>1$, the time decay estimate 
for the Schr\"odinger equation on $\htype$ 
is given by Hierro in \cite{Hi2005} (see, Proposition~\ref{disp_hierro} below). 
Therefore, it is expected that the Strichartz estimate 
for the linear Schr\"odinger equation can be obtained for 
H-type group $\htype$ with center dimension $p>1$. 
Actually, we will give the Strichartz estimate below ({\rm Theorem~\ref{H-Strichartz}). 
To prove the Strichartz estimate, we also give the dispersive estimate below ({\rm Proposition~\ref{disp_linf_est}}). 
For the Heisenberg group $\mathbb{H}^d_1$, 
in \cite{BBG21}, Bahouri {\it et al.} proved the the Strichartz estimate 
for the Schr\"odinger equation on $\mathbb{H}^d_1$ with radial initial data. 
To obtain the Strichartz estimate, 
the authors of \cite{BBG21} used the Fourier restriction theory instead of the dispersive estimate. 
The radial assumption for initial data is removed in \cite{BFpre}. 
Additionally in \cite{BFpre}, the authors proved the Strichartz estimate 
on $\htype$ with $p>1$ by using the Fourier restriction approach. 
We will compare the result in \cite{BFpre} with our Strichartz estimate
 (see, Remark~\ref{bari_compare_our} below). 

%
%
%
 
The aim of this paper is to give the well-posedness results for {\rm (\ref{HNLS})}. 
To guarantee the smoothness of nonlinear term, 
we assume the assumption {\rm (\ref{smooth_cond_nonl})}, 
which will be used in the fractional Leibniz rule (Proposition~\ref{FLR} below). 
Our first result is the following. 
\begin{thm}\label{th1}
Let $d, p\in \N$ with $p\ge 2$, 
\[
\max\left\{\frac{N-p+1}{2},\frac{N-2}{2}\right\}\le s<\frac{N}{2},\ \ 
s>\min\left\{\frac{N-p+1}{2},\frac{N-2}{2}\right\},
\]
and $1<\alpha <1+\frac{4}{N-2s}$.  
We also assume $\alpha \ge \lceil s\rceil$ if $\alpha$ is not an odd integer. 
Then, {\rm (\ref{HNLS})} is locally well-posed in $H^s(\htype)$ 
in the following sence:
\begin{itemize}
\item[{\rm (i)}] For any $u_0\in H^s(\htype)$, 
there exist $T=T(\|u_0\|_{H^s})>0$ 
and a solution $u\in C([0,T];H^s(\htype))$, 
which solution is unique in 
the suitable space $X^s_T$\ $($The definition of $X^s_T$ will be given in Section~{\rm \ref{sec_wp}}$)$. 
\item[{\rm (ii)}] For any $\eta>0$, the data-to-solution map 
$u_0\mapsto u$ is continuous from $H^s(\htype)$ to $C([0,T];H^{s-\eta}(\htype))$. 
Furthermore, if {\rm (\ref{smooth_cond_nonl})} is satisfied, then we can take $\eta=0$. 
Namely, the data-to-solution map 
$u_0\mapsto u$ is continuous from $H^s(\htype)$ to $C([0,T];H^{s}(\htype))$. 
\end{itemize}
\end{thm}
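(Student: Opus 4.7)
The plan is to solve the Cauchy problem \eqref{HNLS} by a standard contraction mapping argument applied to the Duhamel formulation
\[
u(t) = e^{it\mathcal{L}}u_0 - i\mu \int_0^t e^{i(t-t')\mathcal{L}} |u|^{\alpha-1}u(t')\, dt',
\]
in a complete metric subspace $X^s_T$ of $C([0,T];H^s(\htype))$ whose norm combines $L^\infty_t H^s_x$ with a Strichartz-type norm $L^q_t W^{s,r}_x$ adapted to the H-type group setting. First, I would select an admissible pair $(q,r)$ from Theorem~\ref{H-Strichartz} such that the Sobolev embedding $W^{s,r}(\htype) \hookrightarrow L^{\alpha+1}(\htype)$ holds with room to spare, which is precisely what the scaling condition $1<\alpha<1+\tfrac{4}{N-2s}$ and the lower bound $s\ge \tfrac{N-p+1}{2}$ arrange. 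The invariant ball would be of radius $2C\|u_0\|_{H^s}$, and the Strichartz estimate applied to the linear and Duhamel terms reduces everything to estimating $f(u)=\mu|u|^{\alpha-1}u$ in the dual Strichartz norm $L^{q'}_t W^{s,r'}_x$.

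The core nonlinear estimate uses the fractional Leibniz rule (Proposition~\ref{FLR}) and H\"older in space-time to obtain a bound of the form
\[
\||u|^{\alpha-1}u\|_{L^{q'}_t W^{s,r'}_x([0,T])}
\lesssim T^{\theta}\, \|u\|_{L^\infty_t H^s_x}^{\alpha-1}\, \|u\|_{L^q_t W^{s,r}_x},
\]
for some $\theta>0$; the positive power of $T$ is crucial and is exactly what the strict subcriticality $\alpha<1+\tfrac{4}{N-2s}$ delivers. This factor furnishes the contraction for $T$ small depending only on $\|u_0\|_{H^s}$ and yields assertion (i). The regularity condition on $\alpha$ (being an odd integer or $\alpha\ge \lceil s\rceil$) is precisely what Proposition~\ref{FLR} demands in order to apply the chain rule for fractional derivatives to $f(u)$.

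For part (ii), when $\alpha$ satisfies the smoothness assumption \eqref{smooth_cond_nonl}, one directly controls the difference $f(u)-f(v)$ in $L^{q'}_t W^{s,r'}_x$ by the same multilinear estimate applied to $u-v$, so the fixed-point map is Lipschitz in the $X^s_T$-norm and the data-to-solution map is continuous at the full regularity $s$. When \eqref{smooth_cond_nonl} is not imposed, the estimate for $f(u)-f(v)$ at level $s$ fails, but a Bona--Smith type argument recovers continuity in $H^{s-\eta}$ for any $\eta>0$: approximate $u_0$ by a sequence of smooth data $u_0^{(n)}$, use continuity at low regularity (where only $\alpha-1$ derivatives fall on $f$ and no extra smoothness is required), combined with uniform bounds at level $s$ and interpolation.

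The main obstacle will be the bookkeeping in the nonlinear estimate: the Strichartz admissibility on $\htype$ is governed by the reduced dispersive exponent coming from the center dimension $p$ rather than the homogeneous dimension $N$, so the admissible $(q,r)$ is more restrictive than on $\R^N$, and this forces the lower bounds $s\ge \tfrac{N-p+1}{2}$ and $s\ge \tfrac{N-2}{2}$ in order that H\"older combined with Sobolev embedding close correctly while still allowing a range of $\alpha$. Verifying that these thresholds are simultaneously compatible with the subcritical window $1<\alpha<1+\tfrac{4}{N-2s}$ and with the $\lceil s\rceil$-threshold required by the fractional Leibniz rule is the delicate part of the argument.
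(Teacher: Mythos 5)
Your overall architecture (Duhamel formulation, contraction in a mixed $L^\infty_tH^s\cap L^q_tW^{\cdot,r}$ space, fractional Leibniz rule, positive power of $T$ from subcriticality) matches the paper's, but the way you handle the derivative loss in the Strichartz estimates leaves a genuine gap. On $\htype$ the homogeneous estimate reads $\|e^{it\mathcal{L}}u_0\|_{L^q_tL^r_g}\lesssim\|u_0\|_{\dot H^{\sigma}}$ with $\sigma=N(\tfrac12-\tfrac1r)-\tfrac2q>0$, so the auxiliary norm must sit at regularity $s-\sigma$, not $s$: the paper takes $X^s_T=L^\infty_tH^s\cap L^q_tW^{s-s_*,r}$ with $\sigma=s_*$ and $s-s_*>\tfrac{N}{r}$ (Lemma~\ref{adm_exist}), which is exactly what produces the embedding into $L^\infty_g$ and the thresholds $s\ge\tfrac{N-p+1}{2}$, $s\ge\tfrac{N-2}{2}$. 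Your space $L^q_tW^{s,r}$ as written is not controlled by $\|u_0\|_{H^s}$. More seriously, your plan to estimate the nonlinearity in the dual norm $L^{q'}_tW^{s,r'}$ does not close: the inhomogeneous estimate {\rm(\ref{stri_inhom_est})} costs $\sigma_1+\sigma_2$ derivatives on $F$, so measuring both the solution and the nonlinearity with the same non-trivial pair $(q,r)$ would require $s+\sigma$ derivatives of $|u|^{\alpha-1}u$ in $L^{r'}_g$ --- $2\sigma$ more than the auxiliary norm carries. The paper sidesteps this by taking $(q_2,r_2)=(\infty,2)$, i.e.\ placing the nonlinearity in $L^1_tH^s_g$ only, and then using the fractional Leibniz rule in the form $\||u|^{\alpha-1}u\|_{H^s}\lesssim\|u\|_{L^\infty}^{\alpha-1}\|u\|_{H^s}$ together with H\"older in time and $W^{s-s_*,r}\hookrightarrow L^\infty$; your $W^{s,r}\hookrightarrow L^{\alpha+1}$ embedding is the Euclidean reflex and is not what is needed here.

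On part (ii), your Bona--Smith sketch is a different (and heavier) route than the paper's. Since without {\rm(\ref{smooth_cond_nonl})} the claim is only continuity into $H^{s-\eta}$, the paper runs the contraction on the ball of $X^s_T$ equipped with the \emph{weaker} metric $d_0(u,v)=\|u-v\|_{X^0_T}$, using the pointwise bound $||u|^{\alpha-1}u-|v|^{\alpha-1}v|\lesssim(|u|^{\alpha-1}+|v|^{\alpha-1})|u-v|$ in place of the Leibniz difference estimate, and then interpolates $\|u\|_{H^{s-\eta}}\lesssim\|u\|_{H^s}^{1-\eta/s}\|u\|_{L^2}^{\eta/s}$ (Remark~\ref{ex_uni_rem}). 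This is both simpler and is what permits the weaker hypothesis $\alpha\ge\lceil s\rceil$ for existence and uniqueness; if you insist on Bona--Smith you would still need a Lipschitz estimate at some positive regularity for the smoothed flows, which reintroduces the very smoothness assumption you are trying to avoid.
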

\begin{rem}
The condition $1<\alpha <1+\frac{4}{N-2s}$ is equivalent to $s>s_c$, 
where
\[
s_c:=\frac{N}{2}-\frac{2}{\alpha -1}
\]
is the scaling critical exponent for {\rm (\ref{HNLS})}. 
Therefore, if we put
\[
s_*:=\max\left\{\frac{N-p+1}{2},\frac{N-2}{2},s_c\right\},
\]
then Theorem~\ref{th1} says that {\rm (\ref{HNLS})} with $\alpha >1$ 
is locally well-posed in $H^s(\htype)$
for $s_*<s<\frac{N}{2}$ under the assumption {\rm (\ref{smooth_cond_nonl})}. We note that
\[
s_*=\begin{cases}
\frac{N-1}{2}&{\rm if}\ \ 1<\alpha <5\\
s_c&{\rm if}\ \ \alpha \ge 5
\end{cases}
\ \ \ \ {\rm when}\ p=2
\]
and 
\[
s_*=\begin{cases}
\frac{N-2}{2}&{\rm if}\ \ 1<\alpha <3\\
s_c&{\rm if}\ \ \alpha \ge 3
\end{cases}
\ \ \ \ {\rm when}\ p=3,4,5,\cdots. 
\]
In particular, the local well-posedness of {\rm (\ref{HNLS})} 
in $H^s(\mathbb{H}^d_p)$ for $s>s_c$ can be obtained 
when $p=2$, $\alpha \ge 5$ or $p\ge 3$, $\alpha \ge 3$. 
Note that the relation $\frac{N-2}{2}=\frac{N-p+1}{2}$ is equivalent to $p=3$. 
Therefore, Theorem~\ref{th1} also says that the local well-posedness 
of {\rm (\ref{HNLS})} with $\alpha >1$ in $H^s(\htype)$ 
can be obtained for $s=s_*=\frac{N-1}{2}$ when $p=2$, $1<\alpha <5$, 
and for $s=s_*=\frac{N-2}{2}$ when $p=4,5,\cdots$, $1<\alpha <3$. 
\end{rem}

We also treat the scaling critical cases. The second result is the following. 
\begin{thm}\label{th_cri}
Let $d, p\in \N$ with $p\ge 2$. 
We assume one of the following conditions. 
\begin{equation}\label{cri_cond}
{\rm (a)}\ p=2,\ \alpha \ge 5\ \ \ 
{\rm (b)}\ p=3,\ \alpha >3\ \ \ 
{\rm (c)}\ p=4,5,6,\cdots,\ \alpha \ge 3.
\end{equation}
We also assume $\alpha \ge \lceil s\rceil$ if $\alpha$ is not an odd integer. 
Then, {\rm (\ref{HNLS})} is globally well-posed in $H^{s_c}(\htype)$ for small initial data 
in the following sence:
\begin{itemize} 
\item[{\rm (i)}] There exists $\epsilon >0$, 
such that 
for any $u_0\in H^{s_c}(\htype)$ with $\|u_0\|_{H^{s_c}}<\epsilon$, 
there exist a solution $u\in C([0,\infty);H^{s_c}(\htype))$, 
which solution is unique in 
the suitable space $X^{s_c}$. $($The definition of $X^{s_c}$ will be given in Section~{\rm \ref{sec_wp}}.$)$
\item[{\rm (ii)}] For any $\eta>0$, the data-to-solution map 
$u_0\mapsto u$ is continuous  from $B_{\epsilon}(H^{s_c}(\htype))$ to $C([0,\infty);H^{s_c-\eta}(\htype))$, 
where $B_{\epsilon}(H^{s_c}(\htype))$ denotes the set of all $f\in H^{s_c}(\htype)$ satisfying $\|f\|_{H^{s_c}}<\epsilon$. 
Furthermore, if {\rm (\ref{smooth_cond_nonl})} is satisfied, then we can take $\epsilon=0$. 
Namely, the data-to-solution map 
$u_0\mapsto u$ is continuous from $B_{\epsilon}(H^{s_c}(\htype))$ to $C([0,\infty);H^{s_c}(\htype))$. 
\end{itemize}
\end{thm}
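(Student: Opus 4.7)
The plan is to run a contraction mapping on the Duhamel formulation at the critical regularity $s_c$. Since (\ref{HNLS}) is $H^{s_c}$-scaling invariant, no factor of $T$ can be gained and smallness must come from the initial datum, which is why the result is only for small data and gives global-in-time existence. In each of the cases (a), (b), (c) one has $s_c\ge\max\{(N-p+1)/2,(N-2)/2\}$, so the Strichartz estimate of Theorem~\ref{H-Strichartz} supplies a globally admissible space-time pair $(q,r)$ into which the linear evolution $e^{it\mathcal{L}}u_0$ maps boundedly from $H^{s_c}$.

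I would define
\[
X^{s_c}:=L^\infty_t\bigl([0,\infty);H^{s_c}(\htype)\bigr)\cap L^q_t\bigl([0,\infty);W^{s_c,r}(\htype)\bigr),
\]
with $(q,r)$ chosen so that H\"older in $(t,x)$ together with a Sobolev embedding $W^{s_c,r}(\htype)\hookrightarrow L^\rho(\htype)$ gives
\[
\bigl\||u|^{\alpha-1}u\bigr\|_{L^{q'}_tL^{r'}_x}\lesssim \|u\|_{L^\gamma_tL^\rho_x}^{\alpha-1}\|u\|_{L^q_tL^r_x}\lesssim \|u\|_{X^{s_c}}^{\alpha}
\]
for a suitable auxiliary pair $(\gamma,\rho)$. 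Combining this H\"older/Sobolev step with the fractional Leibniz rule (Proposition~\ref{FLR}) to place $s_c$ derivatives on one factor, and then applying Strichartz to the Duhamel integral, yields
\[
\|\Phi(u)\|_{X^{s_c}}\le C\|u_0\|_{H^{s_c}}+C\|u\|_{X^{s_c}}^{\alpha},
\]
where $\Phi(u)(t):=e^{it\mathcal{L}}u_0-i\mu\int_0^t e^{i(t-\tau)\mathcal{L}}|u|^{\alpha-1}u(\tau)\,d\tau$. For differences I would instead use only the pointwise bound $||u|^{\alpha-1}u-|v|^{\alpha-1}v|\lesssim (|u|^{\alpha-1}+|v|^{\alpha-1})|u-v|$, placing no derivatives on the difference, which produces a Lipschitz estimate in a weaker norm $Y$; together with a sufficiently small $\epsilon$ this yields a contraction on a small ball of $X^{s_c}$ and hence the unique global solution.

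Uniqueness in $X^{s_c}$ and the continuous dependence asserted in (ii) follow from the same Lipschitz-in-$Y$ estimate, upgraded by interpolation with the uniform $H^{s_c}$-bound on bounded sequences in $X^{s_c}$ to continuity into $C([0,\infty);H^{s_c-\eta})$ for every $\eta>0$. Under the smoothness condition (\ref{smooth_cond_nonl}), the fractional Leibniz rule Lipschitz-bounds $f(u)-f(v):=\mu(|u|^{\alpha-1}u-|v|^{\alpha-1}v)$ at the full $H^{s_c}$-level, so one can rerun the contraction on differences in $X^{s_c}$ itself and recover the $\eta=0$ conclusion. The main obstacle is, in each of the regimes (a)--(c), pinning down an admissible Strichartz pair $(q,r)$ and auxiliary exponent $\rho$ with a Sobolev embedding $W^{s_c,r}(\htype)\hookrightarrow L^\rho(\htype)$ available so that scaling, admissibility and the Leibniz/H\"older bookkeeping all match simultaneously; the conditions in (\ref{cri_cond}) are precisely the borderline where this system of exponent identities admits a solution.
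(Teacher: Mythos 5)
Your overall architecture (contraction mapping, smallness supplied by the data rather than by a power of $T$, a weaker metric for the difference estimate when $\alpha<\lceil s\rceil+1$, and interpolation with the uniform $H^{s_c}$ bound to get continuity into $H^{s_c-\eta}$) coincides with the paper's. However, there is a genuine gap at the heart of the argument: your solution space $X^{s_c}=L^\infty_tH^{s_c}\cap L^q_tW^{s_c,r}$ and the estimate of the nonlinearity in the dual norm $L^{q'}_tL^{r'}_g$ are set up as if the Strichartz estimate were loss-free, as on $\R^d$. On $\htype$ the homogeneous estimate reads $\|e^{it\mathcal{L}}u_0\|_{L^q_tL^r_g}\lesssim\|u_0\|_{\dot H^{\sigma}}$ with a \emph{positive} derivative loss $\sigma=N(\frac12-\frac1r)-\frac2q$ for every admissible pair other than $(\infty,2)$, so the linear flow maps $H^{s_c}$ into $L^q_tW^{s_c-\sigma,r}$, not into $L^q_tW^{s_c,r}$; your space is simply not reachable from $H^{s_c}$ data, and your claim that Theorem~\ref{H-Strichartz} "supplies a pair into which $e^{it\mathcal{L}}u_0$ maps boundedly from $H^{s_c}$" in this sense is false. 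Likewise the inhomogeneous estimate (\ref{stri_inhom_est}) demands $\|F\|_{L^{q_2'}_t\dot W^{\sigma_1+\sigma_2,r_2'}}$ on the right, i.e.\ the \emph{sum} of both losses, so the Euclidean bookkeeping $\||u|^{\alpha-1}u\|_{L^{q'}_tL^{r'}_g}\lesssim\|u\|_{X^{s_c}}^{\alpha}$ does not close. Moreover, at critical regularity the auxiliary component of the correct space is $W^{s_c-\sigma,r}$ with $s_c-\sigma=0$, so the Sobolev embedding $W^{s_c-\sigma,r}\hookrightarrow L^\infty$ you need for the H\"older/Leibniz step is unavailable unless $r=\infty$ outright.

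What actually makes the proof work — and what you defer as "the main obstacle" rather than resolve — is the specific choice $(q,r)=(\alpha-1,\infty)$, which under each of (a), (b), (c) is a non-endpoint admissible pair with loss exactly $\sigma=\frac N2-\frac2q=s_c$ (Lemma~\ref{adm_exist_2}~(iii)); the auxiliary norm then degenerates to $L^{\alpha-1}_tL^\infty_g$ with no derivatives at all. The nonlinearity is estimated not in a dual Strichartz norm but in $L^1_tH^{s_c}_g$ (taking $(q_2,r_2)=(\infty,2)$ so that $\sigma_2=0$), via the fractional Leibniz rule
\[
\||u|^{\alpha-1}u\|_{L^1_tH^{s_c}_g}\lesssim\|u\|_{L^{\alpha-1}_tL^\infty_g}^{\alpha-1}\|u\|_{L^\infty_tH^{s_c}_g},
\]
where the H\"older exponent in time is exactly saturated ($q=\alpha-1$), which is why $T=\infty$ is allowed and smallness must come from $\|u_0\|_{H^{s_c}}$. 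Conditions (a)--(c) are precisely what make $(\alpha-1,\infty)$ a non-endpoint admissible pair (note $(2,\infty)$ fails to be admissible when $p=3$, which is why $\alpha=3$, $p=3$ is excluded). Without identifying this pair and this $L^1_tH^{s_c}$ placement of the nonlinearity, the contraction estimate you assert is not established.
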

For reader's convenience, 
we give a table for the conditions of $p$, $\alpha$, and $s$, 
which arrows the well-posedness of {\rm (\ref{HNLS})} 
under the assumption {\rm (\ref{smooth_cond_nonl})}
by {\rm Theorems~\ref{th1}} and~{\ref{th_cri} 
(see, {\rm Table~\ref{wp_range_s}  below). 
\begin{table}[H]
\label{wp_range_s}
\begin{center}
\begingroup
\renewcommand{\arraystretch}{1.2}
\begin{tabular}{|c|c|c|c|c|c|c|}
\hline
\multicolumn{1}{|c|}{dimension of the center} & \multicolumn{2}{|c|}{$p=2$} & \multicolumn{2}{|c|}{$p=3$} & 
\multicolumn{2}{|c|}{$p=4,5,6,\cdots$}\\
\hline
\multicolumn{1}{|c|}{degree of the nonlinear term} & \multicolumn{1}{|c|}{$1<\alpha <5$} &\multicolumn{1}{|c|}{$\alpha \ge 5$} 
& \multicolumn{1}{|c|}{$1<\alpha \le 3$} &\multicolumn{1}{|c|}{$\alpha > 3$} 
&  \multicolumn{1}{|c|}{$1<\alpha <3$} &\multicolumn{1}{|c|}{$\alpha \ge 3$}\\
\hline
\multicolumn{1}{|c|}{range of the Sobolev index} & \multicolumn{1}{|c|}{$s\ge \frac{N-1}{2}$} &\multicolumn{1}{|c|}{$s\ge s_c$} 
& \multicolumn{1}{|c|}{$s>\frac{N-2}{2}$} &\multicolumn{1}{|c|}{$s\ge s_c$} 
&  \multicolumn{1}{|c|}{$s\ge \frac{N-2}{2}$} &\multicolumn{1}{|c|}{$s\ge s_c$}\\
\hline
\end{tabular}
\caption{Results in {\rm Theorems~\ref{th1}} and~{\ref{th_cri}}}
\endgroup
\end{center}
\end{table}

\begin{rem}
When $p=3$ and $1<\alpha \le 3$, it holds that
\[
\frac{N-p+1}{2}=\frac{N-2}{2}\le s_c. 
\]
Namely, $H^{s}(\mathbb{H}^d_3)$ for $s=\frac{N-2}{2}$ in this case is the intersection of two critical spaces. 
In particular, if $p=3$ and $\alpha =3$, then we have
\[
s_*=\frac{N-p+1}{2}=\frac{N-2}{2}=s_c. 
\]
and $H^{s_c}(\mathbb{H}^d_3)$ is the intersection of three critical spaces. 
Because of such reason, we cannot treat 
the critical cases for $p=3$ and $1<\alpha \le 3$\ $($see, also {\rm Remarks~{\ref{p3_rem_adm}} and~{\ref{critical_adm_rem}}} below$)$.
\end{rem}
\begin{rem}
In {\rm \cite{Bru}}, by using the property of Banach algebra and embedding theorem of Sobolev spaces on stratified Lie groups 
$($or a group more generalized than stratified Lie groups$)$ instead of Strichartz estimates, Bruno {\it et al.} 
showed the unique existence of solutions NLS on stratified Lie groups including Heisenberg groups and H-type groups. However, their results need the strong condition $s>\frac{N}{2}$ for the regularity of initial data. 
Our results, though restricted to H-type groups, are an improvement in that it holds for lower regularity than the condition of the regularity in {\rm \cite{Bru}}.
\end{rem}
\begin{rem}
The assumption $\alpha \ge \lceil s\rceil +1$ 
is for the purpose of obtaining 
the continuity of solution map $H^s(\htype)\ni u_0\mapsto u\in C([0,T);H^s(\htype))$. 
The existence and uniqueness of solution can be obtained 
under the weaker condition $\alpha \ge \lceil s\rceil$ $($see, also {\rm Remark~\ref{ex_uni_rem}}$)$. 
\end{rem}

The key tool for {\rm Theorems~\ref{th1}} and~{\ref{th_cri}  is the Strichartz estimate 
such as
\begin{equation}\label{stri_linsol}
\|u\|_{L^q_tL^r_g}\le C\|u(0)\|_{\dot{H}^{\sigma}}
\end{equation}
for solutions $u=u(t)\in C(\R;H^{\sigma}(\htype))$ to 
the linear Schr\"odinger equation
\[
i\partial_tu+\mathcal{L}u=0.
\]
Let $S_{\rm lin}$ denotes
the set of all solutions $u\in C(\R;H^{\sigma}(\htype))\cap L^q(\R;L^r(\htype))$ to 
the linear Schr\"odinger equation and put
\[
C_{q,r,\sigma}:=\sup\left\{\left.\frac{\|u\|_{L^q_tL^r_g}}{\|u(0)\|_{\dot{H}^{\sigma}}}\ \right|u\in S_{\rm lin}\right\}. 
\]
The Strichartz estimate {\rm (\ref{stri_linsol})} is equivalent to $C_{q,r,\sigma}<\infty$. 
For $\lambda >0$, we define the scaling transformation as
\[
u_{\lambda}(t,g)=u(\lambda^2t,\delta_{\lambda}(g)), 
\]
where $\delta_{\lambda}$ is the dilation function given by
\[
\delta_{\lambda}(g)=(\lambda z,\lambda^2\eta)\ \ (g=(z,\eta)\in \mathbb{H}_p^d). 
\]
We  will also define the dilation function in
{\rm Remark~\ref{charaHtype}} below. 
We note that if $u\in S_{\rm lin}$ holds, 
then $u_{\lambda}\in S_{\rm lin}$ also holds, 
and we have
\[
\begin{split}
\|u_{\lambda}\|_{L^q_tL^r_g}
&=\left\{
\int_{\R}\left(
\int_{\R^{2d}\times \R^p}|u(\lambda^2t,\lambda z,\lambda^2s)|^rdzds
\right)^{\frac{q}{r}}dt
\right\}^{\frac{1}{q}}
=\lambda^{-\frac{N}{r}-\frac{2}{q}}\|u\|_{L^q_tL^r_g}, \\
\|u_{\lambda}(0)\|_{\dot{H}^{\sigma}}
&=\left(
\int_{\R^{2d}\times \R^p}|(-\mathcal{L})^{\frac{\sigma}{2}}(u(0,\lambda z,\lambda^2s))|^2dzds
\right)^{\frac{1}{2}}
=\lambda^{\sigma-\frac{N}{2}}\|u(0)\|_{\dot{H}^{\sigma}}, 
\end{split}
\]
where we used $N=2d+2p$. Therefore, it holds that 
\[
C_{q,r,\sigma}\ge \sup_{\lambda >0}\frac{\|u_{\lambda}\|_{L^q_tL^r_g}}{\|u_{\lambda}(0)\|_{\dot{H}^{\sigma}}}
=\sup_{\lambda >0}\lambda^{-\frac{N}{r}-\frac{2}{q}+\frac{N}{2}-\sigma}\frac{\|u\|_{L^q_tL^r_g}}{\|u(0)\|_{\dot{H}^{\sigma}}}.
\]
This says that the equality
\begin{equation}\label{nece_cond_stri}
\sigma =N\left(\frac{1}{2}-\frac{1}{r}\right)-\frac{2}{q}
\end{equation}
is necessary condition for the Strichartz estimate {\rm (\ref{stri_linsol})}. 
\begin{rem}
In {\rm \cite{Hi2005}}, 
the Strichartz estimate is given under the condition 
\[
\sigma=N\left(\frac{1}{2}-\frac{1}{r}\right)-\frac{1}{q}.
\]
However, the Strichartz estimate {\rm (\ref{stri_linsol})} does not 
holds under this condition because {\rm (\ref{nece_cond_stri})}
is a necessary condition for {\rm (\ref{stri_linsol})}. 
To show the Strichartz estimate, 
the dispersive estimate is also given in {\rm \cite{Hi2005}}. 
But its proof is omitted $($see, {\rm Remark~\ref{disp_rem_hierro}} below$)$. 
We will modify the statement and give the proof of dispersive estimate.  
We will also prove the Strichartz estimate 
under the natural condition {\rm (\ref{nece_cond_stri})}. 
\end{rem}

To give the statement of our Strichartz estimate, 
we define admissible pair. 
\begin{defn}\label{s-admissible}
Let $\sigma \in \R$, $p\in \N$ with $p\ge 2$. 
We say that $(q,r)\in [2,\infty]\times [2,\infty]$ 
with $(q,r)\ne (\infty,\infty)$ be admissible pair 
if the following conditions hold$:$
\begin{equation}\label{sad_cond}
\frac{2}{q}\le (p-1)\left(\frac{1}{2}-\frac{1}{r}\right),\ \ 
(q,r,p)\ne (2,\infty, 3).
\end{equation}
\end{defn}
\begin{rem}\label{endpoint_rem}
If $p>3$, then $(q,r)=(2,\frac{2(p-1)}{p-3})$ becomes 
an admissible pair. 
We call this pair ``end point''. 
If admissible pair $(q,r)$ is not end point, 
then we call this  pair ``non-end point''. 
We note that non-end point $(q,r)$ satisfies 
at least one of
\[
q>2\ \ \ or\ \ \ 
\frac{2}{q}<(p-1)\left(\frac{1}{2}-\frac{1}{r}\right).
\]
\end{rem}
%
%
%
%

We denote the Schr\"odinger semigroup for the sub-Laplacian $\mathcal{L}$ by $\{e^{it\mathcal{L}}\}_{t\in \R}$. 
Namely, $u(t)=e^{it\mathcal{L}}u_0$ means the solution to linear Schr\"odinger equation 
with initial data $u_0$. 
We note that the operator $e^{it\mathcal{L}}$ is unitary on $L^2(\htype)$ (See, also subsection~\ref{SFT_subsec} below). 
Our Strichartz estimate is the following. 
\begin{thm}[Strichartz estimates]\label{H-Strichartz}
Let $d, p\in \N$ with $p\ge 2$. 
\begin{itemize}
\item[{\rm (i)}] Let $(q,r)$ is non-end point admissible pair. 
We put $\sigma =N\left(\frac{1}{2}-\frac{1}{r}\right)-\frac{2}{q}$. 
Then, there exists $C>0$ such that
\begin{equation}\label{stri_hom_est}
\|e^{it\mathcal{L}}u_0\|_{L^q_tL^r_g}
\le C\|u_0\|_{\dot{H}^{\sigma}}
\end{equation}
holds for any $u_0\in \dot{H}^{\sigma}(\htype)$. 
\item[{\rm (ii)}] Let $(q_1,r_1)$ and $(q_2,r_2)$ are non-end point admissible pairs. 
We put $\sigma_k=N\left(\frac{1}{2}-\frac{1}{r_k}\right)-\frac{2}{q_k}$\ $(k=1,2)$. 
Then, there exists $C>0$ such that
\begin{equation}\label{stri_inhom_est}
\left\|\int_0^te^{i(t-t')\mathcal{L}}F(t')dt'\right\|_{L^{q_1}_tL^{r_1}_g}
\le C\|F\|_{L^{q_2'}_t\dot{W}^{\sigma_1+\sigma_2,r_2'}_g}
\end{equation}
holds for any $F\in L^{q_2'}(\R;\dot{W}^{\sigma_1+\sigma_2,r_2'}(\htype))$, 
where $q_2'$ and $r_2'$ denote the conjugate index of $q_2$ and $r_2$ respectively
\end{itemize}
\end{thm}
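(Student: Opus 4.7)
The plan is to derive both estimates via the classical $TT^*$ duality argument combined with the one-dimensional Hardy--Littlewood--Sobolev inequality, using Proposition~\ref{disp_linf_est} as the sole new ingredient beyond the $L^2$-unitarity of $e^{it\mathcal{L}}$. Throughout, the free identity $\|e^{it\mathcal{L}}u_0\|_{L^2_g}=\|u_0\|_{L^2_g}$ will be interpolated by Riesz--Thorin with the dispersive decay from Proposition~\ref{disp_linf_est} to yield intermediate bounds of the form
\[
\|e^{it\mathcal{L}}f\|_{L^r_g}\le C|t|^{-(p-1)(\frac{1}{2}-\frac{1}{r})}\|f\|_{\dot W^{\tau(r),r'}_g}
\]
for $2<r<\infty$, where $\tau(r)$ is the Sobolev loss forced by the mismatch between the homogeneous dimension $N$ (which governs scaling) and the center dimension $p$ (which governs time decay) on $\htype$.

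For part (i), I would set $Tu_0:=e^{it\mathcal{L}}u_0$, note that $TT^*F(t)=\int_\R e^{i(t-s)\mathcal{L}}F(s)\,ds$ by the semigroup property, and observe that $\|Tu_0\|_{L^q_tL^r_g}\le C\|u_0\|_{\dot H^\sigma}$ is equivalent by duality to
\[
\|TT^*F\|_{L^q_tL^r_g}\le C\|F\|_{L^{q'}_t\dot W^{2\sigma,r'}_g}.
\]
Applying the interpolated dispersive bound to $e^{i(t-s)\mathcal{L}}F(s)$ pointwise in $s$, taking an $L^r_g$ Minkowski step, and then invoking the fractional integration inequality in the time variable gives the result. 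The admissibility relation $\tfrac{2}{q}\le (p-1)(\tfrac{1}{2}-\tfrac{1}{r})$ with $(q,r,p)\neq(2,\infty,3)$ guarantees that the time-decay exponent lies strictly in $(0,1)$ and that the relevant Hardy--Littlewood--Sobolev exponents fall in the non-endpoint range; the Sobolev index $\sigma=N(\tfrac{1}{2}-\tfrac{1}{r})-\tfrac{2}{q}$ appears automatically from tracking the scaling loss through the interpolation, consistent with the necessary condition~\eqref{nece_cond_stri}.

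For part (ii), I would combine part (i) for $(q_1,r_1)$ with its adjoint formulation (obtained by applying (i) to $(q_2,r_2)$ and dualizing), yielding
\[
\left\|\int_\R e^{i(t-t')\mathcal{L}}F(t')\,dt'\right\|_{L^{q_1}_tL^{r_1}_g}\le C\|F\|_{L^{q_2'}_t\dot W^{\sigma_1+\sigma_2,r_2'}_g};
\]
the asymmetric Sobolev gain $\sigma_1+\sigma_2$ (rather than a symmetric $2\sigma$) emerges from coupling the two admissible pairs through the spectral calculus of $\mathcal{L}$, where $(-\mathcal{L})^{s/2}$ commutes with $e^{it\mathcal{L}}$. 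The retarded truncation $\int_0^t$ in place of the full integral $\int_\R$ is then restored through the Christ--Kiselev lemma, whose hypothesis $q_1>q_2'$ is exactly what is afforded by excluding the endpoint admissible pair $(2,\tfrac{2(p-1)}{p-3})$ in the definition of non-end point (see Remark~\ref{endpoint_rem}).

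The principal obstacle, compared with the Euclidean setting, is bookkeeping the Sobolev correction on the source side of the dispersive estimate: the decay rate $(p-1)(\tfrac{1}{2}-\tfrac{1}{r})$ is controlled by the center dimension $p$, whereas the scaling index $\sigma$ is controlled by the homogeneous dimension $N=2d+2p$, so the interpolation must be performed with fractional $\dot W^{\tau,r'}$-norms rather than simple $L^{r'}$-norms. Commuting $(-\mathcal{L})^{s/2}$ through the propagator via spectral calculus and matching the resulting orders of derivatives between the two admissible pairs in (ii) is where the argument requires the most care; the non-endpoint exclusion avoids the need for any bilinear decomposition and keeps the entire scheme within the reach of Hardy--Littlewood--Sobolev and Christ--Kiselev.
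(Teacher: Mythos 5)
Your overall strategy ($TT^*$ duality, interpolation of the dispersive estimate with unitarity, fractional integration in time) is the same family of argument as the paper's, but as written it has a genuine gap: the interpolated bound $\|e^{it\mathcal{L}}f\|_{L^r}\le C|t|^{-(p-1)(\frac12-\frac1r)}\|f\|_{\dot W^{\tau(r),r'}}$ cannot be obtained by a direct Riesz--Thorin interpolation between the unitarity $L^2\to L^2$ bound and the dispersive bound of Proposition~\ref{disp_linf_est}, because the latter has source space $\dot B^{N-p+1}_{1,1}$ rather than $L^1$; interpolating two estimates whose source spaces carry different smoothness indices does not produce a clean $\dot W^{\tau,r'}\to L^r$ bound. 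The paper's way around this is the Littlewood--Paley localization: at each fixed frequency $j$ the dispersive estimate reads $\|\Delta_je^{it\mathcal{L}}u_0\|_{L^\infty}\lesssim 2^{(N-p+1)j}\min\{2^{(p-1)j},|t|^{-\frac{p-1}{2}}\}\|\Delta_ju_0\|_{L^1}$, which is a genuine $L^1\to L^\infty$ bound with a $j$-dependent constant, so Riesz--Thorin applies frequency by frequency, the $TT^*$ argument is run for each $j$, and the sum over $j$ is recovered via Minkowski and the Besov--Sobolev embeddings of Proposition~\ref{embeddings}. Without this localization you also cannot perform the trade of excess time decay for extra derivative loss that fixes the admissible pairs with strict inequality $\frac2q<(p-1)(\frac12-\frac1r)$.

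Two further points where your scheme would fail on concrete non-endpoint pairs. First, for $p>3$ the pairs $(q,r)=(2,r)$ with $r>\frac{2(p-1)}{p-3}$ are non-endpoint admissible, and there the Hardy--Littlewood--Sobolev inequality with kernel $|t|^{-2/q}=|t|^{-1}$ is exactly the forbidden endpoint $\beta=1$; the paper instead uses the frequency-localized, regularized kernel $w(t)=(2^{-2j}+|t|)^{-(p-1)(\frac12-\frac1r)}$, which lies in $L^{q/2}$ precisely because the admissibility inequality is strict, and applies Young's inequality (Case~1 of Lemma~\ref{stri_lemm_1}); HLS is reserved for the sharp case $\frac2q=(p-1)(\frac12-\frac1r)$ with $q>2$. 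Second, in part (ii) your appeal to Christ--Kiselev requires $q_1>q_2'$, which fails when $q_1=q_2=2$ --- again a possible non-endpoint configuration for $p>3$; excluding the endpoint pair $(2,\frac{2(p-1)}{p-3})$ does not force $q>2$. The paper avoids Christ--Kiselev altogether: the retarded integral is estimated directly at each frequency (the kernel bounds are pointwise in $t-t'$, so the truncation $t'<t$ is harmless for Young/HLS), giving the diagonal case $(q_1,r_1,\sigma_1)=(q_2,r_2,\sigma_2)$, and the general case follows by interpolating this with the two trivial cases $(q_k,r_k,\sigma_k)=(\infty,2,0)$.
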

We note that $\dot{W}^{s,r}(\htype)$ denotes the $L^r$-based homogeneous Sobolev space, 
which will be defined in {\rm Definition~\ref{3_1_1}}. 
We will also define the inhomogeneous Sobolev space $W^{s,r}$. 
\begin{rem}
On the Euclidean space $\R^d$, 
the Strichartz estimate such as {\rm (\ref{stri_hom_est})} without derivative loss $($namely, $\sigma=0)$ can be obtained. 
On the other hand, on the H-type group $\htype$, 
the Strichartz estimate {\rm (\ref{stri_hom_est})} contains derivative loss $\sigma>0$ except  the case $(q,r)=(\infty,2)$. 
This fact makes problem difficult to 
obtain the well-posedness of {\rm (\ref{HNLS})}, 
and we have to find suitable admissible pair. $($See, Lemma~{\rm \ref{adm_exist}}.$)$ 
\end{rem}
\begin{rem}
By the embedding $W^{\sigma,r}\hookrightarrow \dot{W}^{\sigma,r}$ $($and $H^{\sigma}\hookrightarrow \dot{H}^{\sigma})$ for $\sigma \ge 0$, 
We can also obtain
\begin{equation}\label{Stri_inhom_sob_lin}
\|e^{it\mathcal{L}}u_0\|_{L^q_tL^r_g}
\le C\|u_0\|_{H^{\sigma}}
\end{equation}
and 
\begin{equation}\label{Stri_inhom_sob_duam}
\left\|\int_0^te^{i(t-t')\mathcal{L}}F(t')dt'\right\|_{L^{q_1}_tL^{r_1}_g}
\le C\|F\|_{L^{q_2'}_tW^{\sigma_1+\sigma_2,r_2'}_g}
\end{equation}
under the same conditions in {\rm Theorem~\ref{H-Strichartz}}. 
\end{rem}
\begin{rem}\label{p3_rem_adm}
To obtain the well-posedness of {\rm (\ref{HNLS})} in $H^s(\htype)$, 
we will seek admissible pair $(q,r)\in [2,\infty]\times [2,\infty]$ and $\sigma \ge 0$
with $\sigma =N\left(\frac{1}{2}-\frac{1}{r}\right)-\frac{2}{q}$ 
satisfying $q\ge \alpha -1$ and $s-\sigma \ge \frac{N}{r}$. 
For the case $p=3$ and $s=\frac{N-2}{2}$, 
there are no admissible pair satisfying such condition. 
Indeed, $(q,r)=(2,\infty)$ is a necessary condition of
\[
\frac{2}{q}\le 2\left(\frac{1}{2}-\frac{1}{r}\right),\ \ 
\sigma =N\left(\frac{1}{2}-\frac{1}{r}\right)-\frac{2}{q},\ \ 
\frac{N-2}{2}-\sigma \ge \frac{N}{r}
\]
for $(q,r)\in[2,\infty]\times [2,\infty]$. 
But $(2,\infty)$ is not admissible pair when $p=3$. 
\end{rem}
\begin{rem}\label{critical_adm_rem}
$(2,\infty)$ becomes admissible pair 
$($with $\sigma =\frac{N-2}{2})$ when $p>3$, 
and $(4,\infty)$ becomes admissible pair 
$($with $\sigma=\frac{N-1}{2})$ when $p=2$. 
Moreover, if one of the conditions in  {\rm (\ref{cri_cond})} holds, 
then $(q,r)=(\alpha-1,\infty)$ becomes admissible pair 
$($with $\sigma =s_c)$. 
On the other hand when $p=3$ and $\alpha =3$, then $(q,r)=(\alpha -1,\infty)=(2,\infty)$ 
does not become admissible pair. 
\end{rem}
\begin{rem}
For the end point $(q.r)=(2,\frac{2(p-1)}{p-3})$ with $p>3$, 
we don't know whether the Strichartz estimates
such as in {\rm Theorem~\ref{H-Strichartz}} hold or not. 
To consider this problem, 
we need the complicated argument $($cf. {\rm \cite{KT98}}$)$.  
Because the end point Strichartz estimate will not be used
in our main well-posedness results, we don't say anything more about it.  
\end{rem}
\begin{rem}\label{bari_compare_our}
Recently in {\rm \cite{BFpre}}, 
Barilari and Flynn proved 
the Strichartz estimates such as
\[
\|e^{it\mathcal{L}}u_0\|_{L^{r_2}_{\eta}L^q_tL^{r_1}_z}
\le C\|u_0\|_{H^{\sigma}}
\]
for
$\sigma
=\frac{N}{2}-\frac{2d}{r_1}-\frac{2p}{r_2}-\frac{2}{q}$
under the condition
\begin{equation}\label{admissi_bari}
r_1\le \min\{r_2,q\},\ \ r_2\ge 2+\frac{4}{p-1},\ \ 
\frac{2}{q}\le \frac{N}{2}-\frac{2d}{r_1}-\frac{2p}{r_2}, 
\end{equation}
where $z$ and $\eta$ denote the horizonal variable and vertical (center) variable of $g=(z,\eta)\in \htype$ 
respectively. 
Because
\[
\begin{split}
\|e^{it\mathcal{L}}u_0\|_{L^{q}_tL^{r}_g}
&\lesssim \|\mathcal{L}^{\frac{1}{2}\max\left\{N\left(\frac{1}{q}-\frac{1}{r}\right),0\right\}}e^{it\mathcal{L}}u_0\|_{L^{q}_tL^{\min\{r,q\}}_g}\\
&\lesssim  \|\mathcal{L}^{\frac{1}{2}\max\left\{N\left(\frac{1}{q}-\frac{1}{r}\right),0\right\}}e^{it\mathcal{L}}u_0\|_{L_{\eta}^{\min\{r,q\}}L^{q}_tL^{\min\{r,q\}}_z}
\end{split}
\]
holds by the Sobolev inequality and Minkowski's integral inequality, 
our Strichartz estimate {\rm (\ref{stri_hom_est})}
can be obtained by using the Strichartz estimate in {\rm \cite{BFpre}} 
with $r_1=r_2=\min\{r,q\}$. 
When $r_1=r_2=\min\{r,q\}$, the condition {\rm (\ref{admissi_bari})} is equivalent to
\[
\min\{q,r\}\ge 2+\frac{4}{p-1},\ \ \frac{2}{q}\le N\left(\frac{1}{2}-\frac{1}{\min\{q,r\}}\right),
\]
and the relation
\[
\max\left\{N\left(\frac{1}{q}-\frac{1}{r}\right),0\right\}+N\left(\frac{1}{2}-\frac{1}{\min\{q,r\}}\right)
=N\left(\frac{1}{2}-\frac{1}{r}\right)
\]
holds for any $(q,r)\in [2,\infty]^2$. 
Therefore, the Strichartz estimate in {\rm \cite{BFpre}} covers 
our Strichartz estimate only under the condition
\begin{equation}\label{admissi_bari_qr}
\min\{q,r\}\ge 2+\frac{4}{p-1},\ \ \frac{2}{q}\le N\left(\frac{1}{2}-\frac{1}{\min\{q,r\}}\right). 
\end{equation}
The condition {\rm (\ref{admissi_bari_qr})}, which is equivalent to 
\[
\max\left\{\frac{1}{q},\frac{1}{r}\right\}\le \frac{1}{2}-\frac{1}{p+1}
\]
 is stronger than our condition {\rm (\ref{sad_cond})}.  
 Therefore, the result in {\rm \cite{BFpre}} does not completely 
 contain {\rm Theorem~\ref{H-Strichartz}}. 
\end{rem}

The content of the paper is as follows. In Section 2, we introduce the definition and properties of the H-type groups $\htype$. Moreover, we summarize Sobolev spaces and Besov spaces on $\htype$ and give the fractional Leibniz rule on $\htype$\ (Proposition \ref{FLR}). 
In Section 3, we give the proof of Strichartz estimates (Theorem~\ref{H-Strichartz}). 
In Section 4, we give the proof of well-posedness  (Thorems~\ref{th1} and ~\ref{th_cri}) 
by applying the Strichartz estimates. 

Throughout this paper, the letters $C$ and so on will be used to denote positive constants, which are independent of the
main variables involved and whose values may vary at every occurrence.
By writing $f\lesssim g$, we mean $f\leq Cg$ for some positive constant $C>0$. The notation $f\sim g$ will stand for $f\lesssim g$ and $g\lesssim f$.

\section{H-type groups and function spaces}\label{stLg}
\subsection{Definition and properties of H-type groups}
\noindent

H-type groups were first introduced by A. Kaplan \cite{Kap3}.  
We recall the definition of H-type groups
 (see \cite{Cow}, \cite{Hi2005}, \cite{Kap3}, \cite{Kap2}, \cite{Liu}, \cite{Stein}, and reference therein). 
 \begin{defn}\label{def_Htype_g}
 Let ${\mathcal G}$ be a two-step nilpotent Lie algebra endowed with an inner product $\disp\left<\cdot,\cdot\right>$ and we denote by ${\mathfrak z}$ its center. 
 Then ${\mathcal G}$ is said to be of H type if ${\mathcal G}$ satisfies the following two conditions:
\begin{itemize}
\item $[{\mathfrak z}^{\perp}, {\mathfrak z}^{\perp}]={\mathfrak z}$
\item For any $S\in{\mathfrak z}$, we define the mapping $J_S$ from ${\mathfrak z}^{\perp}$ 
to ${\mathfrak z}^{\perp}$ by $\disp\left<J_Su,w\right>=\left<S,[u,w]\right>\ (u,w\in{\mathfrak z}^{\perp})$. 
If $|S|=1$, $J_S$ is an orthogonal mapping.
\end{itemize} 
Let $G$ be a connected and simply connected Lie group. 
Then $G$ is said to be a group of H type if its Lie algebra ${\mathcal G}$ is of H type. 
Let ${\mathfrak z}^{\ast}$ be the dual of ${\mathfrak z}$. 
For a given $a (\neq 0) \in{\mathfrak z}^{\ast}$, 
a skew-symmetric mapping $B(a)$ on ${\mathfrak z}^{\perp}$ is defined by
\[
\disp B(a)\left(u,w\right):=a([u,w]),\ u,w\in{\mathfrak z}^{\perp}.
\]
We denote by  $z_a$ an element of ${\mathfrak z}$ determined by
\[
\disp B(a)\left(u,w\right)=a([u,w])=\disp\left<J_{z_a}u,w\right>,\ u,w\in{\mathfrak z}^{\perp}. 
\]
Since $B(a)$ is non-degenerate and a symplectic form, 
we can see that the dimension of ${\mathfrak z}^{\perp}=2d$. 
For a given $a(\neq 0)\in{\mathfrak z}^{\ast}$, 
we can choose an orthonormal basis of ${\mathfrak z}^{\perp}$
\[
\{E_1(a),E_2(a),\cdots,E_d(a),\bar{E}_1(a),\bar{E}_2(a),\cdots,\bar{E}_d(a)\}
\]
such that 
\[
B(a)E_i(a)=|z_a|J_{\frac{z_a}{|z_a|}}E_i(a)=\varepsilon_i|z_a|\bar{E}_i(a)
\]
and
\[
B(a)\bar{E}_i(a)=-\varepsilon_i|z_a|{E}_i(a),
\]
where $\varepsilon_i=\pm 1$. Set $p=\mathrm{dim}~{\mathfrak z}$. 
Then we can denote  the elements of ${\mathcal G}$ by
\[
(z,\eta)=(x,y,\eta)=\disp\sum_{i=1}^d(x_iE_i+y_i\bar{E}_i)+\disp\sum_{j=1}^{p}\eta_j\tilde{E}_j,
\] 
where $\{\tilde{E}_1,\cdots,\tilde{E}_p\}$ is an orthonormal basis such that $a(\tilde{E}_1)=|a|$, 
$a(\tilde{E}_j)=0$, $(j=2,3,\cdots,p)$. 
We identify the H-type Lie algebra ${\mathcal G}$ with the H-type Lie group $G$. 
Then the group law on the H-type group has the form    
\begin{align*}
(z,\eta)\circ(z^{\prime},\eta^{\prime})=\disp\left(z+z^{\prime}, \eta+\eta^{\prime}+\disp\frac{1}{2}[z,z^{\prime}]\right), 
\end{align*}
where $[z,z^{\prime}]_j=\disp\left<z,U^jz^{\prime}\right>$\ $(j=1,2,\cdots,p)$ 
and $U^j$ satisfies the following conditions:
\begin{enumerate}
\item[{\rm (a)}] $U^j$ is a $2d\times 2d$ skew-symmetric and orthogonal matrix, 
\item[{\rm (b)}] For any $i,j\in\{1,2,\cdots,p\}$, $i\neq j$, $U^iU^j+U^jU^i=0$.
\end{enumerate} 
In what follows, we denote by ${\mathbb H}^d_p(={\mathbb R}^{2d+p})$ H-type groups $\htype$ 
to emphasize the dimension $p$ of the center, instead of $G$.
\end{defn}
\begin{rem}\label{charaHtype}
\begin{enumerate}
\item[{\rm (1)}] H-type groups $\htype$ must satisfy $p+1\leq 2d$ $($see {\rm \cite{Kap2}}$)$. 
\item[{\rm (2)}] If the matrix $U^j$ is skew symmetric $($linearly independent$)$, then $\htype$ is called Carnot group.
\item[{\rm (3)}] \label{Hormander1}For any $\lambda >0$, the dilation $\delta_{\lambda}$ : ${\mathbb R}^{2d+p}\rightarrow{\mathbb R}^{2d+p}$ defined by 
\[
\delta_{\lambda}(z,\eta):=(\lambda z,\lambda^2\eta)
\]
for $z=(x,y)\in{\mathbb R}^{2d}$ and $\eta\in{\mathbb R}^p$, 
is an automorphism of H-type groups $\htype$.
\item[{\rm (4)}] If $p=1$, then $\mathbb{H}^d_1$ is called the Heisenberg group.
\end{enumerate}
\end{rem}

By Definition~\ref{def_Htype_g}, the unit element of H-type groups is $\bvec{e}=(0,0)$ and 
the inverse element is $(-z,-\eta)$. 
For $j=1,\cdots, d$ and $i=1,\cdots,p$, 
the left-invariant vector fields are given by 
\begin{align*}
X_j &:= \disp\frac{\partial}{\partial x_j}+\frac{1}{2}\disp\sum_{k=1}^p\left(\disp\sum_{l=1}^{2d}z_lU_{l,j}^k\right)\disp\frac{\partial}{\partial \eta_k},\ 
Y_j := \disp\frac{\partial}{\partial y_j}+\frac{1}{2}\disp\sum_{k=1}^p\left(\disp\sum_{l=1}^{2d}z_lU_{l,j+d}^k\right)\disp\frac{\partial}{\partial \eta_k},\ 
S_i :=\disp\frac{\partial}{\partial \eta_i},
\end{align*} 
where, $z_l=x_l$, $z_{l+d}=y_l$ $(l=1,2,\cdots,d)$ and 
$U_{i,j}^k$, $U_{i,j+d}^k$ are the $(i,j)$ and $(i,j+d)$ components of the matrix $U^k$, respectively. 
Let 
\[
{\mathcal B}_0=(X_1,\cdots,X_{d},Y_1,\cdots,Y_{d}),\ \ {\mathcal F}_0=(S_1,\cdots,S_p)
\]
 be an orthonormal basis of 
${\mathfrak z}^{\perp}$ and   an orthonomal basis of ${\mathfrak z}$, respectively. 
By using these basis, we identify ${\mathfrak z}^{\perp}$ with ${\mathbb R}^{2d}$ and 
${\mathfrak z}$ with ${\mathbb R}^p$, respectively. 
Then H${\rm\ddot{o}}$rmander condition
\begin{align}
\mathrm{rank}(\mathrm{Lie}\{X_1,\cdots,X_{d},Y_1,\cdots,Y_{d}\}(g))=2d+p \label{Hormander2}
\end{align}
holds for any $g\in{\mathbb R}^{2d+p}$, 
that is, 
the iterated commutators of $X_1,\cdots,X_{d}$, 
$Y_1,\cdots,Y_d$ span the Lie algebla ${\mathcal G}$ of ${\htype}$. 
Hence by Remark \ref{charaHtype} {\rm (3)} and \eqref{Hormander2}, 
H-type groups $\htype$ are 2-step stratified Lie groups 
(regarding the details of stratified Lie groups, we refer to
 \cite{Ugu}, \cite{Fis}, \cite{Folland2}, 
 \cite{Var2} and reference therein). 
 
The sublaplacian of $\htype$ is denoted by 
\[
{\mathcal L}:=-\sum_{i=1}^{d}(X_i^2+Y_i^2).
\] 
This essentially self-adjoint positive operator does not depend on the choice of ${\mathcal B}_0$. 
Thanks to H${\rm\ddot{o}}$rmander's result, the sublaplacian ${\mathcal L}$ is subelliptic.  
This does not depend on the choice of ${\mathcal B}_0$ and ${\mathcal F}_0$ (see \cite{Cor}).  
Furthermore, thanks to H${\rm\ddot{o}}$rmander's result, the Carnot-Carath${\rm \acute{e}}$odory distance 
$\rho_{{\mathcal B}_0}(g,g^{\prime})$ can also be defined (see \cite{Ugu} and \cite{Var2}  for details). 
We denote by $\rho(g)$ the distance from the origin, {\it i.e}. $\rho(g)=\rho_{{\mathcal B}_0}(\bvec{e},g)$.  
The homogeneous of degree of  $\rho$  is one, that is, 
\begin{align*}
\rho(\delta_{\lambda}(g))=\lambda \rho(g),\ g\in {\htype}
\end{align*}
for any $\lambda>0$ (see \cite{Ugu}, Proposition 5.2.6).
It also holds that
\begin{align*}
 \rho({g^{\prime}}^{-1}\cdot g)\leq \rho(g)+\rho({g^{\prime}}).
\end{align*}

Let
\[
N:=\mathrm{dim}~{\mathfrak z}^{\perp}+2\mathrm{dim}~{\mathfrak z}=2d+2p
\]
denotes the homogeneous dimension of $\htype$. 
H-type groups $\htype$ are locally compact Hausdorff spaces and Haar measure of  $\htype$ is 
the Lebesgue measure 
\[
dg=dx_1\cdots d{x_d}dy_1\cdots dy_{d}d\eta_1\cdots d\eta_p.
\] 
We can see that
\[
\disp\int_{{\htype}}f(\delta_{\lambda}(g))dg=\lambda^{-N}\disp\int_{\htype}f(g)dg.
\]
The convolution $f*h$ of $f$ with $h$ on $\htype$ is defined by
\[
(f*h)(g):=\disp\int_{\htype}f(g^{\prime})h({g^{\prime}}^{-1}\cdot g)dg^{\prime}
=\disp\int_{\htype}f(g\cdot {g^{\prime}}^{-1})h(g^{\prime})dg^{\prime}.
\] 
The convolution $*$ is non-commutative. 
The relationship between the left-invariant vector fields $X_i$ and 
the convolution $*$ is $X_i(f*h)(g)=(f* X_ih)(g)$.
For $1\le q\le \infty$, we set 
\[
L^q(\htype):=\disp\left\{f\ |\ \|f\|_{L^q}<\infty\right\}\\
\]
with the norm $\|\cdot\|_{L^q}$ defined by 
\[
\|f\|_{L^q}:=
\begin{cases}
\ \left(\disp\int_{\htype}|f(g)|^qdg\right)^{\frac{1}{q}} &{\rm if}\ 1\le q<\infty,\\ 
\ \displaystyle \esssup\displaylimits_{g\in \htype}f(g) &{\rm if}\ q=\infty. 
\end{cases}
\]

\subsection{Besov and Sobolev spaces on H-type groups}\label{bes_sob_def_sec}
\noindent

At first, we recall the definitions of 
the Besov spaces $B^s_{r,q}(\htype)$ and $\dot{B}^s_{r,q}(\htype)$. 
By the spectral theorem, the sublaplacian ${\mathcal L}$ on H-type groups $\htype$ 
satisfies a spectral resolution
\[
{\mathcal L}=\disp\int_0^{\infty}\lambda dE_{\lambda},
\]
where $dE_{\lambda}$ is the projection measure. 
If $\Theta$ is a bounded Borel measure function on ${\mathbb R}_+$, 
then the operator
\[
\Theta({\mathcal L})=\disp\int_0^{\infty}\Theta({\lambda})dE_{\lambda}
\]
is bounded on $L^2(\htype)$. 
Furthermore by the Schwartz kernel theorem, 
there exists a tempered distribution kernel $K_{\Theta({{\mathcal L}})}$ on $\htype$ 
such that 
\[
\Theta({\mathcal L})f=f*K_{\Theta({{\mathcal L}})}
\]
for any $f\in{\mathcal S}(\htype)$, 
where ${\mathcal S}$ denotes the Schwartz class. 
It is known that if $\Theta\in{\mathcal S}({\mathbb R}_+)$, 
then the distribution kernel $K_{\Theta({{\mathcal L}})}$ of 
the operator $\Theta({\mathcal L})$ belongs to ${\mathcal S}(\htype)$
(see, \cite{Fur}, \cite{Hu}, and \cite{Hul2}).
 
 Let $\varphi, \varphi_0\in C^{\infty}({\mathbb R}_+)$ such that 
$\mathrm{supp}~\varphi\subset [0,4]$, $|\varphi(\lambda)|\geq c>0$ for $\lambda\in[0, 2^{3/2}]$
and 
$\mathrm{supp}~\varphi_0\subset[1/4, 4]$, $|\varphi_0(\lambda)|\geq c>0$ for $\lambda\in[2^{-3/2}, 2^{3/2}]$. 
Set $\varphi_j(\lambda)=varphi_0(2^{-2j}\lambda)$ for $j\in \Z$.
We define the Besov spaces as follows (we refer to \cite{Fue}, \cite{Fur}, \cite{Hu2}, \cite{Hu}, and references therein). 
\begin{defn}\label{HomoBes}
Let $s\in{\mathbb R}$, $1\leq r<\infty$ and $1\leq q\leq \infty$. 
\begin{enumerate}
\item[{\rm (i)}] 
The inhomogeneous Besov space ${B}^s_{r,q}(\htype)$ is defined as the set of all $f\in {\mathcal S}^{\prime}(\htype)$ for which 
\begin{align}
\|f\|_{{B}^s_{r,q}}:=\disp\left(\left\|\varphi({{\mathcal L}})f\right\|_{L^r}^q
+\disp\sum_{j=1}^{\infty}2^{jsq}\left\|\varphi_j({{\mathcal L}})f\right\|_{L^r}^q\right)^{\frac{1}{q}}<\infty\label{241262}
\end{align}
with the usual modification for $q=\infty$.
\item[{\rm (ii)}] 
The homogeneous Besov space $\dot{B}^s_{r,q}(\htype)$ is defined as the set of all $f\in {\mathcal S}^{\prime}(\htype)/{\mathcal P}$ for which 
\begin{align}
\|f\|_{\dot{B}^s_{r,q}}:=\disp\left(\disp\sum_{j\in{\mathbb Z}}2^{jsq}\left\|\varphi_j({{\mathcal L}})f\right\|_{L^r}^q\right)^{\frac{1}{q}}<\infty\label{241262}
\end{align}
with the usual modification for $q=\infty$, where ${\mathcal P}$ denotes the space of all polynomials on $\htype$.
\end{enumerate}
\end{defn}
\noindent

\begin{rem}\label{rem_def_space}
\begin{enumerate}
\item[{\rm (i)}] By the general theory developed in {\rm \cite{Fue}, \cite{Hu2}}, and {\rm \cite{Hu}}, 
it is known that the definitions of the these spaces are independent of the choice of $\varphi$ and $\varphi_0$, as long as $\varphi_0$ and $\varphi$ satisfy all the conditions as above. 
\item[{\rm (ii)}] 
Suppose $\varphi$, $\varphi_0\in C^{\infty}({\mathbb R}_{\geq 0})$ such that $\mathrm{supp}$ $\varphi$  and $\mathrm{supp}$ $\varphi_0$ are compact, $0\not\in \mathrm{supp}$ $\varphi_0$, and 
\begin{align}\label{assump2}
\varphi (\lambda)+\disp\sum_{j=1}^{\infty}\varphi_j(\lambda)=1\ \ (\lambda\in{\mathbb R}_{\geq 0}),
\end{align}
then for all $f\in {\mathcal S}'(\htype)$,
\[
f=\varphi ({\mathcal L})f+\disp\sum_{j=1}^{\infty}\varphi_j({\mathcal L})f\ {\rm in}\ S'(\htype)
\]
holds $($see {\rm \cite{Hu2}}$)$.

Also note that if $\varphi_0\in C^{\infty}({\mathbb R}_+)$ with compact support, vanishing identically near the origin, and satisfying
\begin{align}\label{assump1}
\disp\sum_{j\in{\mathbb Z}}\varphi_j(\lambda)=1\ \ (\lambda\in{\mathbb R}_+), 
\end{align}
then for all $f\in {\mathcal S}^{\prime}(\htype)/{\mathcal P}$,
\[
f=\disp\sum_{j\in\mathbb{Z}}\varphi_j({{\mathcal L}})f\ {\rm in}\ {\mathcal S}^{\prime}(\htype)/{\mathcal P}
\] 
holds $($see {\rm \cite{Hu}}$)$. 

Therefore, in this paper, we also assume  \eqref{assump2} and \eqref{assump1} with 
the conditions of $\varphi$ and $\varphi_0$ as above. 
\item[{\rm (iii)}] 
By {\rm Corollary~3.16} in {\rm \cite{Hu2}}, for $s\in{\mathbb R}$, $1\leq r,q\leq\infty$, a nonnegative integer $m$ such that $m>s$, and $f\in{\mathcal S}'(\htype)$, it holds that  
\[
\|f\|_{B^s_{r,q}}\sim \|e^{-{\mathcal L}}f\|_{L^r}+\left(\disp\int_0^1\xi^{-sq/2}\|(\xi{\mathcal L})^{m/2}e^{-\xi{\mathcal L}}f\|_{L^r}^q\dfrac{d\xi}{\xi}\right)^{1/q}.
\]
Therefore, by {\rm Theorem~5.1.(iii)} in {\rm \cite{Bruno2}}, for $1\leq r\leq \infty$, we have the embedding  
\begin{align}
B^{{N}/{r}}_{r,1}(\htype)\hookrightarrow L^{\infty}(\htype).
\end{align}
Note that we consider only the case that the group $G$ in {\rm \cite{Bruno2}} is unimodular.
\item[{\rm (iv)}] If $s<N/r$, then 
$\dot{B}^s_{r,q}(\htype)$ can be defined as the set of tempered distributions $f\in S'(\htype)$ such that \eqref{241262} holds  $($see {\rm \cite{Bah}}, {\rm \cite{Saw}} and reference therein$)$.
\end{enumerate}
\end{rem}

Next, we recall the definition and basic properties of Sobolev spaces on $\htype$. 
We adopt the definition of the Sobolev spaces in \cite{Fis} to H-type groups $\htype$
(see also \cite{Folland2} and \cite{Sa1979}).
At first, we recall the definition of fractional powers of the sublaplacian ${\mathcal L}$.
\begin{defn}[\cite{Fis}, \cite{Folland2}, \cite{Sa1979}]
Assume that $1<r<\infty$, $s>0$ and $k=[s]+1$. 
Then the operator ${\mathcal L}_r^{s}$ is defined by 
\begin{align*}
{\mathcal L}_r^{s}f =\disp\lim_{\varepsilon\rightarrow 0}\disp\frac{1}{\Gamma(k-s)}\disp\int_{\varepsilon}^{\infty}\nu^{k-s-1}{\mathcal L}^ke^{\nu{\mathcal L}}f~d \nu
\end{align*}
on the domain of all $f\in L^r(\htype)$ such that the indicated limit exists in $L^r(\htype)$. 
The operator ${\mathcal L}_r^{-s}$ is defined by 
\begin{align*}
{\mathcal L}_r^{-s}f=\disp\lim_{\eta\rightarrow \infty}\disp\frac{1}{\Gamma(s)}\disp\int_0^{\eta}\nu^{s-1}e^{\nu{\mathcal L}}f~d\nu
\end{align*}
on the domain of all $f\in L^r(\htype)$ such that the indicated limit exists in $L^r(\htype)$. 
The operator $(\mathrm{Id}+{\mathcal L}_r)^{s}$ is defined by
\begin{align*}
(\mathrm{Id}+{\mathcal L}_r)^{s}f = \disp\lim_{\varepsilon\rightarrow 0}\disp\frac{1}{\Gamma(k-s)}\disp\int_{\varepsilon}^{\infty}\nu^{k-s-1}(\mathrm{Id}+{\mathcal L})^ke^{-\nu}e^{\nu{\mathcal L}}f~d\nu
\end{align*}
on the domain of all $f\in L^r(\htype)$ such that the indicated limit exists in $L^r(\htype)$.
Also, we define the operator $(\mathrm{Id}+{\mathcal L}_r)^{-s}$ by 
\begin{align*}
(\mathrm{Id}+{\mathcal L}_r)^{-s}f=\disp\frac{1}{\Gamma(s)}\disp\int_0^{\infty}\nu^{s-1}
e^{-\nu}e^{\nu{\mathcal L}}f~d\nu.
\end{align*}
The operator $(\mathrm{Id}+{\mathcal L}_r)^{-s}$ is a bounded operator on $L^r$. 
\end{defn}
\noindent
\begin{prop}[\cite{Fis}, \cite{Folland2}]\label{3_15_1}
Let $1<r<\infty$ and ${\mathcal M}_r$ denotes either ${\mathcal L}_r$ or $\mathrm{Id}+{\mathcal L}_r$. 
\begin{enumerate}
\item[\rm{(i)}] ${\mathcal M}_r^s$ is a closed operator on $L^r(\htype)$ for all  $s\in{\mathbb R}$ and 
injective with $({\mathcal M}^s_r)^{-1}={\mathcal M}^{-s}_r$.
\item[\rm{(ii)}] If $f\in \mathrm{Dom}({\mathcal M}^{\beta}_r)\cap\mathrm{Dom}({\mathcal M}^{\alpha+\beta}_r)$, then ${\mathcal M}^{\beta}_rf\in\mathrm{Dom}({\mathcal M}^{\alpha}_r)$ and ${\mathcal M}^{\alpha}_r{\mathcal M}^{\beta}_rf={\mathcal M}^{\alpha+\beta}_rf$. ${\mathcal M}^{\alpha+\beta}_r$ becomes the smallest closed extension of ${\mathcal M}^{\alpha}_r{\mathcal M}^{\beta}_r$. 
\item[\rm{(iii)}] When $s>0$, if $f\in \mathrm{Dom}({\mathcal M}^{s}_r)\cap L^q(\htype)$, 
then $f\in \mathrm{Dom}({\mathcal M}^{s}_q)$ if and only if ${\mathcal M}^{s}_rf\in L^q(\htype)$, 
in which case ${\mathcal M}^{s}_r={\mathcal M}^{s}_q$. 
\item[\rm{(iv)}] If $s>0$, 
then $\mathrm{Dom}({\mathcal L}_r^s)=\mathrm{Dom}((\mathrm{Id}+{\mathcal L}_r)^s)$ .
\end{enumerate}
\end{prop}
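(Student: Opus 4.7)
The plan is to recognise this proposition as the standard catalogue of properties of fractional powers of non-negative operators, applied to the specific operators $\mathcal{L}_r$ and $\mathrm{Id}+\mathcal{L}_r$ on $L^r(\htype)$, and therefore to invoke the abstract Komatsu--Balakrishnan theory (developed for sub-Laplacians on stratified groups by Folland and, more generally, by Fischer--Ruzhansky) after verifying its hypotheses. The essential input is that the heat semigroup $\{e^{-t\mathcal{L}}\}_{t\ge 0}$ is given by right-convolution with a positive Schwartz kernel $p_t$ of unit $L^1$-mass, so it extends to a strongly continuous contraction semigroup on every $L^r(\htype)$ with $1<r<\infty$. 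Hence $\mathcal{L}_r$ is a non-negative sectorial operator, and $\mathrm{Id}+\mathcal{L}_r$ is the same with spectrum shifted into $[1,\infty)$.

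For (i), closedness of $\mathcal{M}_r^s$ is standard from Balakrishnan's integral representation, being (on the appropriate domain) the inverse of the bounded operator $\mathcal{M}_r^{-s}$. The inverse relation $(\mathcal{M}_r^s)^{-1}=\mathcal{M}_r^{-s}$ is then a special case of (ii) with $\beta=-\alpha$. The only delicate point is injectivity of $\mathcal{L}_r^s$, since $0$ lies in the spectrum of $\mathcal{L}$; this is handled by observing that $L^r(\htype)$ admits no non-zero constant (as $\htype$ has infinite Haar measure), and more generally that the only $L^r$-harmonic function on $\htype$ is $0$, so the kernel of $\mathcal{L}_r$, hence of $\mathcal{L}_r^s$, is trivial.

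For (ii), I would verify the composition law directly from the two Balakrishnan integrals by a Fubini interchange, and then use the Beta-function identity to collapse the resulting double integral back into a single Balakrishnan integral representing $\mathcal{M}_r^{\alpha+\beta}f$; this is Komatsu's classical argument. Minimality of the closed extension follows from density of $\bigcup_{k}\mathrm{Dom}(\mathcal{M}_r^k)$ in $L^r(\htype)$, itself a consequence of strong continuity of the heat semigroup. Part (iii) asserts compatibility across the $L^r$- and $L^q$-realisations: since the heat kernel $p_t$ is a single Schwartz function independent of the target Lebesgue space, the Balakrishnan integrals defining $\mathcal{M}_r^s f$ and $\mathcal{M}_q^s f$ agree pointwise whenever both make sense, and the statement follows from uniqueness of the limit.

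For (iv), the key identity is, for $s>0$,
\[
(\mathrm{Id}+\mathcal{L}_r)^{-s}-\mathcal{L}_r^{-s}=\frac{1}{\Gamma(s)}\int_0^\infty \nu^{s-1}(e^{-\nu}-1)e^{-\nu\mathcal{L}}\,d\nu,
\]
whose integrand is operator-norm integrable since $e^{-\nu}-1=O(\nu)$ near $0$ and exponentially small at $\infty$. Hence this difference is a bounded operator on $L^r(\htype)$, so the ranges of $(\mathrm{Id}+\mathcal{L}_r)^{-s}$ and $\mathcal{L}_r^{-s}$ coincide, and by (i) those ranges are exactly $\mathrm{Dom}((\mathrm{Id}+\mathcal{L}_r)^s)=\mathrm{Dom}(\mathcal{L}_r^s)$. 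The main obstacle will be (ii): the Fubini interchange requires absolute convergence of a double integral containing high powers of $\mathcal{M}$ applied to $f$, and for the purely positive operator $\mathcal{L}_r$ one must work carefully at $\nu=\infty$ since there is no spectral gap, which is precisely the point at which one leans most heavily on Folland's original construction for stratified groups.
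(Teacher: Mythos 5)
The paper does not actually prove Proposition~\ref{3_15_1}: it is imported verbatim from Folland \cite{Folland2} (Theorem~3.15 there) and Fischer--Ruzhansky \cite{Fis}, so there is no in-paper argument to compare against. Your general framework --- Balakrishnan--Komatsu fractional powers of the generator of the heat contraction semigroup, with consistency of the $L^r$- and $L^q$-realisations coming from the single Schwartz heat kernel --- is the correct one and matches what those references do; your outlines of (ii) and (iii) are sound.

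There is, however, a genuine error that appears twice, in (i) and in (iv): you treat ${\mathcal L}_r^{-s}$ as a bounded operator on $L^r(\htype)$. It is not. Since $\htype$ is non-compact and $0\in\sigma({\mathcal L})$, the heat semigroup has no operator-norm decay as $\nu\to\infty$, and ${\mathcal L}_r^{-s}$ is a Riesz-potential-type operator, densely defined but unbounded on $L^r$ --- which is precisely why the paper defines it only as a limit on a domain. Consequently, in (i) the closedness of ${\mathcal L}_r^{s}$ cannot be obtained as ``the inverse of the bounded operator ${\mathcal L}_r^{-s}$'' (that reasoning is valid only for $\mathrm{Id}+{\mathcal L}_r$), and in (iv) your key identity
\[
(\mathrm{Id}+{\mathcal L}_r)^{-s}-{\mathcal L}_r^{-s}=\frac{1}{\Gamma(s)}\int_0^\infty \nu^{s-1}\bigl(e^{-\nu}-1\bigr)e^{-\nu{\mathcal L}}\,d\nu
\]
is \emph{not} operator-norm integrable at $\nu=\infty$: there $e^{-\nu}-1\to -1$ rather than being exponentially small, so the integrand is comparable to $-\nu^{s-1}e^{-\nu{\mathcal L}}$, whose norm is of order $\nu^{s-1}$ and is non-integrable for every $s>0$. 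Indeed, were the difference bounded, ${\mathcal L}_r^{-s}$ itself would be bounded, a contradiction; and even granting boundedness, equality of the ranges of two operators does not follow from boundedness of their difference. The standard repair for (iv) is a spectral multiplier argument: $\lambda^{s}(1+\lambda)^{-s}$ and $(1+\lambda)^{s}(1+\lambda^{s})^{-1}$ are Mihlin--H\"ormander multipliers for ${\mathcal L}$ on $L^r(\htype)$ (Hulanicki-type functional calculus), whence $f\in\mathrm{Dom}((\mathrm{Id}+{\mathcal L}_r)^{s})$ if and only if $f$ and ${\mathcal L}_r^{s}f$ both lie in $L^r$; this is the step your sketch needs and does not supply.
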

\noindent
By Proposition \ref{3_15_1} (iii), 
${\mathcal L}_r^s$ (resp. $(\mathrm{Id}+{\mathcal L}_r)^s$) agrees with 
${\mathcal L}_q^s$ (resp. $(\mathrm{Id}+{\mathcal L}_q)^s$) on their common domains 
for $s\in{\mathbb R}$ and $1<q,r<\infty$. 
So we omit the subscripts on these operators except when we wish to specify the domains.

We define the definition of Sobolev spaces $W^{s,r}(\htype)$ and $\dot{W}^{s,r}(\htype)$
as follows. 
\begin{defn}[\cite{Fis}]\label{3_1_1}Let $s\in{\mathbb R}$ and $1<r<\infty$. 
\begin{enumerate}
\item[{\rm (i)}]We denote by $W^{s,r}(\htype)$ the space of tempered distributions obtained by the completion of 
the Schwartz class ${\mathcal S}(\htype)$ with respect to the Sobolev norm 
\[
\|f\|_{W^{s,r}}:=\|(\mathrm{Id}+{\mathcal L})^{\frac{s}{2}}f\|_{L^r}.
\]
\item[{\rm (ii)}]We denote by $\dot{W}^{s,r}(\htype)$ the space of tempered distributions obtained by the completion of 
${\mathcal S}(\htype)\cap\mathrm{Dom}~({\mathcal L}^{\frac{s}{2}})$ with respect to the norm 
\[
\|f\|_{\dot{W}^{s,r}}:=\|{\mathcal L}^{\frac{s}{2}}f\|_{L^r}
\]
\end{enumerate}
\end{defn}
\noindent
The Sobolev spaces $W^{s,r}(\htype)$ and $\dot{W^{s,r}}(\htype)$ have the following basic properties.
\begin{prop}[\cite{Fis}]\label{3_31_1}
\begin{enumerate}
\item[{\rm (i)}] Let $s\in {\mathbb R}$ and $1<r<\infty$. 
Then $W^{s,r}(\htype)$ and $\dot{W^{s,r}}(\htype)$ are 
Banach space satisfying
\[
{\mathcal S}(\htype)\subsetneq W^{s,r}(\htype)\subset{\mathcal S}^{\prime}(\htype)
\]
and 
\[
({\mathcal S}(\htype)\cap\mathrm{Dom}~({\mathcal L}_r^{\frac{s}{2}}))\subsetneq \dot{W}^{s,r}(\htype)\subsetneq{\mathcal S}^{\prime}(\htype),
\]
 respectively. 
\item[{\rm (ii)}] If $s=0$ and $1<r<\infty$, then $\dot{W}^{0,r}(\htype)=W^{0,r}(\htype)=L^r(\htype)$ with 
$\|\cdot\|_{\dot{L}^r_0}=\|\cdot\|_{L^r_0}=\|\cdot\|_{L^r}$.
\item[{\rm (iii)}] If $s>0$ and $1<r<\infty$, then we have 
\[
W^{s,r}(\htype)=\dot{W}^{s,r}(\htype)\cap L^r(\htype)
\]
and 
\[
\|\cdot\|_{W^{s,r}}\sim \|\cdot\|_{L^r}+\|\cdot\|_{\dot{W}^{s,r}}.
\]
 \end{enumerate}
\end{prop}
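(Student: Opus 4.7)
The plan is to combine the functional-calculus apparatus for the sublaplacian $\mathcal{L}$ from Proposition~\ref{3_15_1} with a Mihlin--H\"ormander type spectral multiplier theorem for $\mathcal{L}$ on H-type groups, following \cite{Fis}.

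For (i), each of $W^{s,r}(\htype)$ and $\dot{W}^{s,r}(\htype)$ is by construction the completion of a normed subspace, hence automatically Banach. The embedding $W^{s,r}(\htype)\subset \mathcal{S}'(\htype)$ is obtained by checking that a Cauchy sequence $\{f_n\}\subset \mathcal{S}(\htype)$ in the $W^{s,r}$-norm has a well-defined limit in $\mathcal{S}'(\htype)$: for $\phi\in\mathcal{S}(\htype)$ one writes
\[
\langle f_n,\phi\rangle=\langle (\mathrm{Id}+\mathcal{L})^{s/2}f_n,\,(\mathrm{Id}+\mathcal{L})^{-s/2}\phi\rangle
\]
and applies H\"older's inequality, using that $(\mathrm{Id}+\mathcal{L})^{-s/2}\phi$ is in $L^{r'}(\htype)$ via Proposition~\ref{3_15_1}. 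The analogous argument for $\dot{W}^{s,r}(\htype)$ employs $\mathcal{L}^{-s/2}\phi$ and proceeds modulo $\mathcal{P}$ to handle the low-frequency part. The strictness of the listed inclusions is witnessed by explicit non-Schwartz elements (for instance suitable smoothed power-type functions whose homogeneous degree matches the Sobolev scaling but which fail the rapid decay required of $\mathcal{S}(\htype)$).

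For (ii) the claim is essentially tautological: when $s=0$ both $(\mathrm{Id}+\mathcal{L})^{0}$ and $\mathcal{L}^{0}$ reduce to the identity, so the two Sobolev norms coincide with $\|\cdot\|_{L^r}$; density of $\mathcal{S}(\htype)$ in $L^r(\htype)$ for $1<r<\infty$ then identifies each completion with $L^r(\htype)$.

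For (iii), the heart is the norm equivalence $\|f\|_{W^{s,r}}\sim \|f\|_{L^r}+\|f\|_{\dot{W}^{s,r}}$. One direction uses $L^r$-boundedness of the two spectral multipliers $m_{1}(\lambda)=(1+\lambda)^{-s/2}$ and $m_{2}(\lambda)=\lambda^{s/2}(1+\lambda)^{-s/2}$ of $\mathcal{L}$, which gives $\|f\|_{L^r}\lesssim \|f\|_{W^{s,r}}$ and $\|\mathcal{L}^{s/2}f\|_{L^r}\lesssim \|f\|_{W^{s,r}}$ respectively. The reverse inequality uses the pointwise bound $(1+\lambda)^{s/2}\le C_s(1+\lambda^{s/2})$ and the $L^r$-boundedness of the multiplier $\lambda\mapsto(1+\lambda)^{s/2}/(1+\lambda^{s/2})$, applied to $(\mathrm{Id}+\mathcal{L})^{s/2}f$. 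The set identity $W^{s,r}(\htype)=\dot{W}^{s,r}(\htype)\cap L^r(\htype)$ then drops out by density of $\mathcal{S}(\htype)$ in each side together with this equivalence of norms. The main obstacle is obtaining the $L^r$-boundedness of the above spectral multipliers of $\mathcal{L}$ for $1<r<\infty$; this is not elementary but is part of the well-developed multiplier theory for sublaplacians on stratified Lie groups (Hulanicki, Christ, Mauceri--Meda, Folland), and is precisely the tool exploited systematically in \cite{Fis}, so we can appeal to it directly.
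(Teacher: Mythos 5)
The paper does not actually prove this proposition --- it is imported verbatim from Fischer--Ruzhansky \cite{Fis} --- and your sketch follows the same route used there (completions plus a duality pairing for the embedding into $\mathcal{S}'(\htype)$, and $L^r$-boundedness of Mihlin--H\"ormander spectral multipliers of $\mathcal{L}$ for the norm equivalence in (iii)), so it is consistent with the source and essentially correct. The one thin spot is the embedding $\dot{W}^{s,r}(\htype)\subset\mathcal{S}'(\htype)$: for $s\ge N/r$ the function $\mathcal{L}^{-s/2}\phi$ need not lie in $L^{r'}(\htype)$ for $\phi\in\mathcal{S}(\htype)$, and ``proceeding modulo $\mathcal{P}$'' is not available because the statement asserts an inclusion into $\mathcal{S}'$ itself rather than into the quotient, so this step requires the more careful realization of the homogeneous space that is carried out in \cite{Fis}.
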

\begin{rem}
We denote the space $\dot{W}^{s,2}(\htype)$ by $\dot{H}^s(\htype)$ and 
the space ${W}^{s,2}(\htype)$ by ${H}^s(\htype)$, respectively.
\end{rem}
%
%
%

\begin{prop}\label{embeddings}If $1< r\leq \infty$ and $s\in{\mathbb R}$, then we have the estimates
\begin{equation}\label{besov_sobo_embed}
\begin{split}
&\|f\|_{\dot{B}^s_{r,2}}\le C\|f\|_{\dot{W}^{s,r}},\ \ 
1<r\le 2,\\
&\|f\|_{\dot{W}^{s,r}}\le C\|f\|_{\dot{B}^s_{r,2}},\ \ 2\le r<\infty. 
\end{split}
\end{equation}
\end{prop}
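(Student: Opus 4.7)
The plan is to derive both inequalities from the vector-valued Littlewood–Paley characterization of $\dot{W}^{s,r}(\htype)$ and then apply Minkowski's inequality in the appropriate direction to interchange the roles of the $L^r$-norm in the space variable and the $\ell^2$-norm in the Littlewood–Paley index.

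First I would invoke the square-function identification on the H-type group:
\[
\|f\|_{\dot{W}^{s,r}}\sim \left\|\left(\sum_{j\in\Z}\bigl|2^{js}\varphi_j(\mathcal{L})f\bigr|^2\right)^{1/2}\right\|_{L^r},
\qquad 1<r<\infty,\ s\in\R.
\]
This is the stratified Lie group analogue of the classical identification $\dot{W}^{s,r}=\dot{F}^{s}_{r,2}$ in the Euclidean setting, and is available in the references \cite{Fue}, \cite{Fur}, \cite{Hu}, \cite{Hu2}: it follows by combining Hulanicki's theorem (smooth compactly supported spectral multipliers of $\mathcal{L}$ have Schwartz-class convolution kernels) with a Hörmander-type multiplier theorem and the general Calderón--Zygmund theory for vector-valued singular integrals on the space of homogeneous type $(\htype,\rho,dg)$.

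Once the square-function characterization is in hand, the two inequalities reduce to a routine application of Minkowski's inequality applied to the mixed norms $L^{r}_g(\ell^2_j)$ and $\ell^{2}_j(L^{r}_g)$. For $2\le r<\infty$, viewing $L^r=L^{r/2}(L^2)^{1/2}$ and using Minkowski in $L^{r/2}$ yields
\[
\left\|\left(\sum_j|a_j|^2\right)^{1/2}\right\|_{L^r}
\le \left(\sum_j\|a_j\|_{L^r}^2\right)^{1/2},
\]
with $a_j=2^{js}\varphi_j(\mathcal{L})f$, so that $\|f\|_{\dot{W}^{s,r}}\lesssim \|f\|_{\dot{B}^s_{r,2}}$. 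For $1<r\le 2$ the inequality reverses, since Minkowski's integral inequality for $(L^r,\ell^2)$ with $r\le 2$ gives
\[
\left(\sum_j\|a_j\|_{L^r}^2\right)^{1/2}
\le \left\|\left(\sum_j|a_j|^2\right)^{1/2}\right\|_{L^r},
\]
which produces $\|f\|_{\dot{B}^s_{r,2}}\lesssim \|f\|_{\dot{W}^{s,r}}$.

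The principal obstacle is thus the first step: establishing the Littlewood--Paley equivalence for $\dot{W}^{s,r}(\htype)$. Once that is granted, the remainder of the argument is purely a consequence of the pointwise Minkowski inequality between $\ell^2$ and $L^r$, and requires no group structure. Since the Littlewood--Paley identification is already proved in the cited works on Besov and Sobolev spaces on stratified Lie groups, I would simply quote it and present the two Minkowski reductions as the proof proper.
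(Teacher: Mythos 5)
Your argument is correct, and for the range $2\le r<\infty$ it coincides exactly with the paper's proof: the authors also reduce to the identification $\|f\|_{\dot W^{s,r}}\sim\|f\|_{\dot F^s_{r,2}}$ (quoting Lemma~6.2 of \cite{Hir} for it) and then apply Minkowski's inequality to pass from $L^r(\ell^2)$ to $\ell^2(L^r)$. Where you diverge is the range $1<r\le 2$: the paper does not argue at all there, but simply cites Theorem~3.3~(4) of \cite{Car}, whereas you derive this half from the same square-function characterization together with the reverse Minkowski inequality $\ell^2(L^r)\ge L^r(\ell^2)$ for $r\le 2$. Your route is more self-contained and treats both halves symmetrically, but it silently requires the Littlewood--Paley identification $\dot W^{s,r}\sim\dot F^s_{r,2}$ to be available for $1<r<2$ as well; the paper only ever invokes that identification for $r\ge 2$, so if you take this route you should check (or cite explicitly) that the references establish the equivalence on the full range $1<r<\infty$ --- it does hold, by the vector-valued Calder\'on--Zygmund/H\"ormander multiplier machinery you sketch, but it is a genuinely stronger input than what the paper's own proof consumes. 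Both Minkowski applications are in the correct direction, so once that identification is granted your proof is complete. (Neither your proof nor the paper's addresses the endpoint $r=\infty$ appearing in the hypothesis of the proposition, but no displayed inequality actually claims anything there.)
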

\begin{proof}
For $1<r<2$, one can find it in Theorem 3.3 (4) in \cite{Car}.
For $r\geq 2$, it is immediately clear from the relation
\begin{equation}\label{sob_tri_rel}
\|f\|_{\dot{F}^{s}_{r,2}}
\sim \|f\|_{\dot{W}^{s,r}}
\end{equation}
and Minkowski's integral inequality, 
where $\dot{F}^{s}_{r,2} $ denotes the homogeneous Triebel-Lizorkin space defined by the norm
\[
\|f\|_{\dot{F}^s_{r,2}}=
\disp\left\|\left(\disp\sum_{j\in{\mathbb Z}}2^{2js}|\varphi_j({{\mathcal L}})f|^2\right)^{\frac{1}{2}}\right\|_{L^r}. 
\] 
The proof of $(${\rm \ref{sob_tri_rel}}$)$ is given in {\rm \cite{Hir}} $($see, {\rm Lemma~6.2} in {\rm \cite{Hir}}$)$. 
\end{proof}
\begin{rem}
The inhomogeneous version of
{\rm (\ref{besov_sobo_embed})} is obtained by Saka\ $(${\rm Theorem\ 20} in {\rm \cite{Sa1979}}$)$. 
\end{rem}
\begin{rem}
By {\rm Proposition~\ref{embeddings}}, we have
\[
 \|f\|_{\dot{B}^s_{r,2}}\sim \|f\|_{\dot{H}^{s}}= \|{\mathcal L}^{\frac{s}{2}}f\|_{L^2}.
\]
\end{rem}
%
%
Finally in this subsection, we give the fractional Leibniz rule on $\htype$.
\begin{lemm}[{\rm Lemma 5.1 in \cite{Hir}}]\label{frac_chain_ref}
Assume that $F\in C^l(\C,\C)$, $l\in{\mathbb N}$, and $\alpha\geq l$ with
\[
F(0)=0,\ |F^{(j)}(z)|\leq C|z|^{\alpha-j}\ (z\in \C)
\]
for $j=1,2,\cdots, l$. 
Let $0\leq s\leq l$ and $1<p,q<\infty$, $1<r\leq \infty$ satisfy $\frac{1}{p}=\frac{1}{q}+\frac{\alpha-1}{r}$. Then, there exists $C>0$ such that
\[
\|{\mathcal L}^{\frac{s}{2}}F(u)\|_{L^p} \leq C\|u\|_{L^r}^{\alpha-1}\|{\mathcal L}^{\frac{s}{2}}u\|_{L^q}.
\]
holds for any $u\in L^r(\G)\cap\dot{W}^{s,q}(\G)$. 
\end{lemm}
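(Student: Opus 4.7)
The plan is to reduce the bound to a Triebel--Lizorkin estimate and then run a Bony-type paraproduct argument adapted to the sub-Laplacian $\mathcal{L}$. By the equivalence $\|\mathcal{L}^{s/2}f\|_{L^{p}}\sim\|f\|_{\dot F^{s}_{p,2}}$ recorded in \eqref{sob_tri_rel}, it suffices to establish
\[
\|F(u)\|_{\dot F^{s}_{p,2}}\lesssim\|u\|_{L^{r}}^{\alpha-1}\,\|u\|_{\dot F^{s}_{q,2}}.
\]
The Littlewood--Paley projections $\varphi_{j}(\mathcal{L})$ are built from the heat semigroup and so satisfy Gaussian-type kernel bounds on $\htype$ (Folland--Stein / Hulanicki calculus); consequently all the tools used in the Euclidean proof---Bernstein's inequality, almost-orthogonality, and the Fefferman--Stein vector-valued maximal inequality on the space of homogeneous type $\htype$---are at our disposal.

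I would first treat the integer case $s=m\in\{0,1,\dots,l\}$. Applying horizontal vector fields $X_{j}, Y_{j}$ up to total order $m$ and iterating Fa\`a di Bruno expresses $\mathcal{L}^{m/2}F(u)$ as a finite linear combination of terms of the form $F^{(k)}(u)\prod_{i}Z^{\beta_{i}}u$ with $\sum_{i}|\beta_{i}|=m$ and $1\le k\le m$. Each such product is controlled in $L^{p}$ by H\"older's inequality using the pointwise bound $|F^{(k)}(u)|\lesssim|u|^{\alpha-k}$, placing $F^{(k)}(u)$ in $L^{r/(\alpha-k)}$ and the product of derivatives in the complementary exponent via Gagliardo--Nirenberg interpolation on $\htype$. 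The hypothesis $\alpha\ge m$ keeps every exponent non-negative.

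For non-integer $s$, I would decompose $F(u)$ \`a la Bony: with $S_{j}:=\sum_{k\le j}\varphi_{k}(\mathcal{L})$, telescope
\[
F(u)=F(0)+\sum_{j\in\mathbb{Z}}\bigl(F(S_{j}u)-F(S_{j-1}u)\bigr)
\]
and Taylor-expand each increment to order $l$. The leading terms reproduce the usual frequency-localized Leibniz contributions, while the Taylor remainder is pointwise bounded by $|u|^{\alpha-l}|(S_{j}-S_{j-1})u|^{l}$, a factorization legitimate precisely because $\alpha\ge l$. Substituting this into the square function defining $\dot F^{s}_{p,2}$, and combining the pointwise bound $|F'(\text{low-frequency part})|\lesssim M(|u|^{\alpha-1})$ (with $M$ the Hardy--Littlewood maximal function on $\htype$) with the Fefferman--Stein inequality in the pair $(L^{r/(\alpha-1)},L^{q})$, yields the paraproduct bound; H\"older's inequality with the stated exponent relation closes the estimate.

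The main obstacle is the endpoint $r=\infty$, where $M$ is not bounded on $L^{r/(\alpha-1)}=L^{\infty}$; in that regime one replaces $M(|u|^{\alpha-1})$ by the uniform bound $\|u\|_{L^{\infty}}^{\alpha-1}$ and the estimate becomes strictly simpler. A secondary technical point is confirming that the heat-kernel-based Littlewood--Paley pieces on $\htype$ admit enough gradient decay to support the paraproduct manipulations; this is classical in the Folland--Stein calculus already invoked in Section~\ref{stLg} and can be cited wholesale.
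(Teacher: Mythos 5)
First, a point of comparison: the paper does not prove this lemma at all. It is quoted verbatim as Lemma~5.1 of \cite{Hir} (the authors' earlier work on stratified Lie groups), so there is no in-paper argument to measure your proposal against; it has to stand on its own.

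Your overall strategy --- pass to the Triebel--Lizorkin characterization via \eqref{sob_tri_rel}, then run the Christ--Weinstein telescoping/paraproduct argument using Hulanicki's functional calculus for $\varphi_j(\mathcal{L})$, Bernstein-type inequalities, and the Fefferman--Stein vector-valued maximal inequality on the space of homogeneous type $\htype$ --- is the standard route and is very plausibly close to what \cite{Hir} does. The remark on $r=\infty$ is correct, and the integer case via the equivalence of $\|\mathcal{L}^{m/2}f\|_{L^p}$ with sums of $L^p$-norms of $m$-fold horizontal derivatives (Folland), Fa\`a di Bruno, H\"older, and Gagliardo--Nirenberg interpolation is a sound outline.

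The genuine gap is the non-integer case with $s>1$. The telescoping identity $F(u)=\sum_j\bigl(F(S_ju)-F(S_{j-1}u)\bigr)$ with the mean-value factorization $F(S_ju)-F(S_{j-1}u)=m_j\,\Delta_ju$, $|m_j|\lesssim\bigl(M|u|\bigr)^{\alpha-1}$, yields the chain rule only for $0<s<1$; for larger $s$ the frequency localization of $m_j\Delta_ju$ is too weak and the square-function bookkeeping fails --- this is precisely why the classical Euclidean fractional chain rule is restricted to $s\in(0,1)$. Your proposed repair, Taylor expanding each increment to order $l$, produces terms $F^{(k)}(S_{j-1}u)(\Delta_ju)^k$ whose contribution to the $\dot F^{s}_{p,2}$ square function carries $k$ copies of $\Delta_ju$; resumming these into a single factor of $\|u\|_{\dot F^{s}_{q,2}}$ times $\|u\|_{L^r}^{\alpha-1}$ requires a multilinear paraproduct analysis that the sketch does not supply. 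Moreover the stated remainder bound $|u|^{\alpha-l}|(S_j-S_{j-1})u|^{l}$ is not correct as written: the intermediate point in Taylor's theorem lies between $S_{j-1}u$ and $S_ju$, not at $u$, so the correct bound involves $\bigl(|S_{j-1}u|+|\Delta_ju|\bigr)^{\alpha-l}$ and must be routed through the maximal function. The standard way to close the argument for $1<s\le l$ is instead to write $s=m+\sigma$ with $m$ a nonnegative integer and $\sigma\in[0,1)$, use the integer chain rule to reduce to products $F^{(k)}(u)\prod_i Z^{\beta_i}u$, and then combine the fractional Leibniz rule with the $\sigma\in(0,1)$ chain rule and interpolation; the hypotheses $\alpha\ge l$ and $|F^{(j)}(z)|\le C|z|^{\alpha-j}$ are exactly what makes that reduction work. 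Without this reduction (or an equivalent substitute) the proposal does not yet cover the full range $0\le s\le l$.
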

{\rm Lemma~\ref{frac_chain_ref}} implies the following (see, Corollary~3.5 in \cite{Dinh17}). 
\begin{prop}\label{FLR}
Let $F(z)=|z|^{\alpha-1}z$ with $\alpha>1$, $s\geq 0$ and $1<p,q<\infty$, $1<r\leq\infty$ 
satisfy $\frac{1}{p}=\frac{1}{q}+\frac{\alpha-1}{r}$. 
Assume $\alpha \ge \lceil s\rceil$ if $\alpha$ is not an odd integer. 
Then, there exists $C>0$ such that
\begin{align}\label{FLR_ineq}
\|F(u)\|_{\dot{W}^{s,p}}\leq C\|u\|_{L^r}^{\alpha-1}\|u\|_{\dot{W}^{s,q}}.
\end{align}
holds for any $u\in L^r(\htype)\cap \dot{W}^{s,q}(\htype)$. 
Additionally assume 
$\alpha \ge \lceil s\rceil +1$ if $\alpha$ is not an odd integer. 
Then, there exists $C>0$ such that
\begin{equation}\label{FLR_diff_ineq}
\begin{split}
\|F(u)-F(v)\|_{\dot{W}^{s,p}}
&\leq C(\|u\|_{L^r}^{\alpha-1}+\|v\|_{L^{r}}^{\alpha-1})
\|u-v\|_{\dot{W}^{s,q}}\\
&\ \ \ \ +C(\|u\|_{L^r}^{\alpha-2}+\|v\|_{L^r}^{\alpha-2})(\|u\|_{\dot{W}^{s,q}}+\|v\|_{\dot{W}^{s,q}})
\|u-v\|_{L^r}
\end{split}
\end{equation}
holds for any $u,v\in L^r(\htype)\cap \dot{W}^{s,q}(\htype)$. 
\end{prop}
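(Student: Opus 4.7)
The plan for (\ref{FLR_ineq}) is to apply Lemma \ref{frac_chain_ref} directly. I take $l = \lceil s\rceil$ so that $s \le l$, and verify the hypotheses for $F(z) = |z|^{\alpha-1}z$: namely $F(0) = 0$, $F \in C^l(\C,\C)$, and $|F^{(j)}(z)| \lesssim |z|^{\alpha-j}$ for $1 \le j \le l$. When $\alpha$ is an odd integer, $F(z) = (z\bar z)^{(\alpha-1)/2}z$ is a polynomial in $(z,\bar z)$, hence $C^\infty$, and the derivative bounds are immediate. Otherwise, the assumption $\alpha \ge \lceil s\rceil = l$ gives both the required regularity of $F$ and the pointwise derivative bound by direct computation in polar coordinates. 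Lemma \ref{frac_chain_ref} with the H\"older split $\frac{1}{p} = \frac{1}{q} + \frac{\alpha-1}{r}$ then yields (\ref{FLR_ineq}).

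For (\ref{FLR_diff_ineq}), the strategy is to combine a Wirtinger-type integral representation with a fractional Leibniz rule on $\htype$ and a further application of Lemma \ref{frac_chain_ref}. First I write
\[
F(u) - F(v) = (u-v)\int_0^1 F_z(v+\tau(u-v))\,d\tau + (\overline{u}-\overline{v})\int_0^1 F_{\bar z}(v+\tau(u-v))\,d\tau,
\]
where the Wirtinger derivatives $F_z(z) = \tfrac{\alpha+1}{2}|z|^{\alpha-1}$ and $F_{\bar z}(z) = \tfrac{\alpha-1}{2}|z|^{\alpha-3}z^2$ each satisfy $|F_z^{(k)}(z)|, |F_{\bar z}^{(k)}(z)| \lesssim |z|^{\alpha-1-k}$. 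Next I apply a fractional Kato--Ponce product rule on $\htype$ (an $\mathcal{L}^{s/2}$-Leibniz inequality, obtained by Littlewood--Paley arguments analogous to those behind Lemma \ref{frac_chain_ref}) to distribute $\mathcal{L}^{s/2}$ across each product. When the derivative falls on $(u-v)$ or $(\overline{u}-\overline{v})$, the remaining factor is controlled pointwise by $(|u|+|v|)^{\alpha-1}$, and a H\"older split yields the first line of (\ref{FLR_diff_ineq}). When the derivative falls on the $\tau$-integral, I apply Lemma \ref{frac_chain_ref} to $F_z$ and $F_{\bar z}$ under the integral sign, followed by Minkowski's inequality in $\tau$, which produces the second line.

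The main obstacle is two-fold. First, the fractional Leibniz product rule on $\htype$ with H\"older exponents adapted to the scaling $\frac{1}{p} = \frac{1}{q} + \frac{\alpha-1}{r}$ must be available; on $\R^d$ this is the classical Kato--Ponce inequality, and the stratified-group analogue follows from the same Littlewood--Paley toolkit that underlies Lemma \ref{frac_chain_ref}. Second, and more essentially, applying the chain-rule estimate to $F_z$ and $F_{\bar z}$ demands that these functions be $C^{\lceil s\rceil}$ with derivative bounds of order $|z|^{\alpha-1-\lceil s\rceil}$; this is precisely where the strengthened hypothesis $\alpha \ge \lceil s\rceil + 1$ enters, and it explains the one-unit regularity gap between (\ref{FLR_ineq}) and (\ref{FLR_diff_ineq}).
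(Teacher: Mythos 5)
Your proposal is correct and follows essentially the same route as the paper, which derives the proposition from Lemma \ref{frac_chain_ref} by invoking the argument of Corollary 3.5 in \cite{Dinh17}: the first estimate is a direct application of the chain rule with $l=\lceil s\rceil$, and the difference estimate comes from the Wirtinger-derivative integral representation combined with a fractional Leibniz rule and the chain rule applied to $F_z$, $F_{\bar z}$, which is exactly where the extra hypothesis $\alpha\ge\lceil s\rceil+1$ is consumed. The only ingredient you use that the paper does not state explicitly is the Kato--Ponce product rule on $\htype$, but this is part of the standard toolkit in the cited references and the paper itself offers no more detail than the citation.
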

\begin{rem}\label{IFLR}
By H\"older inequality and {\rm Proposition \ref{3_31_1}\ (iii)}, 
the same estimates as in {\rm Proposition~\ref{FLR}} hold even if $\dot{W}^{s,p}$ and $\dot{W}^{s,q}$-norms are replaced by ${W}^{s,p}$ and ${W}^{s,q}$-norms respectively.
\end{rem}

\subsection{Spherical Fourier transform}\label{SFT_subsec}
\noindent 

A function $f$ on $\htype$ is said to be radial if the value of $f(z,s)$ depends only on $|z|$ and $s$.
We denote by ${\mathcal S}_{\mathrm{rad}}(\htype)$ and by $L^p_{\mathrm{rad}}(\htype)$, $1\leq p\leq\infty$, 
the space of radial functions in ${\mathcal S}(\htype)$ and in $L^p(\htype)$, respectively. Note that the set of $L^1_{\mathrm{rad}}(\htype)$ endowed with the convolution product $*$ is a commutative.

For $f\in L^1_{\mathrm{rad}}(\htype)$, we define the spherical Fourier transform
\begin{equation}\label{sp_ft_def}
\hat{f}(\lambda,m)=
\begin{pmatrix}
m+d-1\\
m
\end{pmatrix}^{-1}\disp\int_{\mathbb{R}^{2d+p}}e^{i\lambda s} f(z,s)L^{(d-1)}_m\left(\dfrac{|\lambda|}{2}|z|^2\right)dzds
\end{equation}
for $m\in{\mathbb N}$ and $\lambda\in{\mathbb R}^p$, where $L_m^{(d-1)}(\tau)$ is the Laguerre functions (see \cite{Hi2005} and \cite{Liu}). Note that $\widehat{f_1*f_2}=\hat{f_1}\hat{f_2}$ holds for $f_1,f_2\in L^1_{\mathrm{rad}}(\htype)$. The following proposition is Plancherel theorem on H-type groups.
\begin{prop}[\cite{Hi2005}, \cite{Liu}]\label{inverse_formula}
For all $f\in {\mathcal S}_{\mathrm{rad}}(\htype)$ satisfying
\[
\disp\sum_{m\in{\mathbb N}}
\begin{pmatrix}
m+d-1\\
m
\end{pmatrix}\disp\int_{{\mathbb R}^p}\left|\hat{f}(\lambda,m)\right||\lambda|^dd\lambda<\infty,
\]
we have
\[
f(z,s)=\left(\dfrac{1}{2\pi}\right)^{d+p}\sum_{m\in{\mathbb N}}\disp\int_{{\mathbb R}^p}e^{-i\lambda s}\hat{f}(\lambda, m)
L^{(d-1)}_m\left(\dfrac{|\lambda|}{2}|z|^2\right)|\lambda|^dd\lambda,
\]
where the sum being convergent in $L^{\infty}$ norm.
\end{prop}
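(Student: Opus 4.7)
The inversion formula is the Plancherel theorem for the spherical Fourier transform on $\htype$, and it factors as the tensor product of two classical inversions: Euclidean Fourier inversion on $\R^p$ in the center variable $s$, and the Laguerre--Plancherel expansion on $\R^{2d}$ in the radial horizontal variable $|z|^2$ at spectral parameter $|\lambda|$. The starting point is to perform the partial Fourier transform in the center variable,
\[
\tilde f(z,\lambda) := \int_{\R^p} e^{i\lambda s} f(z,s)\,ds,
\]
so that $\tilde f \in \mathcal S(\R^{2d}\times\R^p)$ is radial in $z$, and Euclidean inversion on $\R^p$ gives $f(z,s) = (2\pi)^{-p}\int_{\R^p} e^{-i\lambda s}\tilde f(z,\lambda)\,d\lambda$. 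By Fubini, the defining formula \eqref{sp_ft_def} factors through $\tilde f$ as
\[
\hat f(\lambda,m) = \binom{m+d-1}{m}^{-1}\int_{\R^{2d}}\tilde f(z,\lambda)\,L_m^{(d-1)}\!\left(\tfrac{|\lambda|}{2}|z|^2\right)dz,
\]
reducing the problem to the reconstruction of $\tilde f(\cdot,\lambda)$ from its Laguerre coefficients for each fixed $\lambda\ne 0$.

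For that Laguerre step, I would invoke the completeness on $L^2_{\mathrm{rad}}(\R^{2d})$ of the scaled Laguerre functions $L_m^{(d-1)}(\tfrac{|\lambda|}{2}|z|^2)$ (in the convention of Hierro and Liu, where the twisted-Hermite Gaussian damping $e^{-|\lambda||z|^2/4}$ is absorbed into the definition), which form an orthogonal system with squared norms proportional to $\binom{m+d-1}{m}|\lambda|^{-d}$. Under the change of variables $\tau=\tfrac{|\lambda|}{2}|z|^2$ combined with the polar decomposition $dz=\omega_{2d-1}\,r^{2d-1}\,dr$, the standard Laguerre orthogonality against the measure $\tau^{d-1}e^{-\tau}\,d\tau$ yields
\[
\tilde f(z,\lambda) = (2\pi)^{-d}\sum_{m\in\N}\hat f(\lambda,m)\,L_m^{(d-1)}\!\left(\tfrac{|\lambda|}{2}|z|^2\right)|\lambda|^d.
\]
Substituting this back into the Euclidean inversion and interchanging the sum with the $\lambda$ integral---which is legitimate by the absolute-convergence hypothesis---produces the stated identity with overall prefactor $(2\pi)^{-(d+p)}$. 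The same hypothesis also delivers the asserted $L^\infty$-convergence of the sum.

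\textbf{Main obstacle.} Analytically everything above is classical, so the work is in the bookkeeping of normalization constants in the Laguerre step: one must verify that the binomial coefficient hidden in \eqref{sp_ft_def} cancels precisely against the Plancherel weight $|\lambda|^d$ produced by the polar Jacobian, while keeping careful track of the convention in which $L_m^{(d-1)}$ is read. Concretely, this amounts to the bilinear completeness identity
\[
(2\pi)^{-d}|\lambda|^d\sum_{m\in\N}\binom{m+d-1}{m}^{-1}L_m^{(d-1)}\!\left(\tfrac{|\lambda|}{2}|z|^2\right)L_m^{(d-1)}\!\left(\tfrac{|\lambda|}{2}|z'|^2\right)=\delta_{\mathrm{rad}}(z-z'),
\]
which is the Mehler--Hardy--Hille bilinear kernel for Laguerre functions at $t\to 1^-$ and encodes the completeness of the twisted Hermite eigenbasis of the sub-Laplacian $\mathcal L$ at parameter $\lambda$.
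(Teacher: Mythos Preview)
The paper does not supply its own proof of this proposition: it is quoted from \cite{Hi2005} and \cite{Liu} and used as a black box. There is therefore nothing in the paper to compare your argument against.

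That said, your sketch is the standard and correct route to this inversion formula, and it is essentially what one finds in the cited sources: perform the Euclidean Fourier transform in the central variable $s\in\R^p$, then for each fixed $\lambda\neq 0$ expand the resulting radial function of $z\in\R^{2d}$ in the orthogonal basis of (rescaled) Laguerre functions $L_m^{(d-1)}(\tfrac{|\lambda|}{2}|z|^2)$, and finally combine the two inversions. The absolute-convergence hypothesis is exactly what is needed to justify the interchange of sum and integral and to upgrade to $L^\infty$-convergence. Your identification of the only delicate point---tracking the normalization so that the binomial coefficient in \eqref{sp_ft_def} and the Jacobian factor $|\lambda|^d$ combine to give the prefactor $(2\pi)^{-(d+p)}$---is accurate; this is bookkeeping rather than a conceptual obstacle.
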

Furthermore, if $f\in{\mathcal S}_{\mathrm{rad}}(\htype)$, the function ${\mathcal L}f$ are also in  ${\mathcal S}_{\mathrm{rad}}(\htype)$. So its spherical transform is given by
\begin{align}\label{EV1}
\widehat{{\mathcal L}f}(\lambda,m)=(2m+d)|\lambda|\hat{f}(\lambda,m).
\end{align}
Let $A=i{\mathcal L}$. Since ${\mathcal L}$ is a self-adjoint operator in $L^2(\htype)$, $iA$ is also a self-adjoint operator in $L^2(\htype)$. Then by Stone's theorem, the family of the multiplier operators $e^{it{\mathcal L}}$ for $t\in{\mathbb R}$ is a group of 
unitary operators (for example, see \cite{Pazy}). Thus it holds that $(e^{it{\mathcal L}})^{*}=e^{-it{\mathcal L}}$ and $\|e^{it{\mathcal L}}f\|_{L^2}=\|f\|_{L^2}$ for all $f\in L^2(\htype)$. Furthermore, if $f\in L_{\rm rad}^2(\htype)$, by \eqref{EV1}, we have
\begin{align}\label{EV2}
\widehat{e^{it{\mathcal L}}f}(\lambda,m)=e^{it (2m+d)|\lambda|}\hat{f}(\lambda,m).
\end{align}
For any $j\in{\mathbb Z}$, we define by $\Phi_j$ the kernel of the operator $\varphi_j({\mathcal L})$ which has already appeared in subsection~\ref{bes_sob_def_sec}
(also be careful of Remark \ref{rem_def_space}). Especially, $\Phi_0$ is the kernel of the operator $\varphi_0({\mathcal L})$. Furthermore, by Proposition \ref{inverse_formula}, we have the homogeneous property
\begin{align}\label{phi_j_oomoji_def}
\Phi_j(z,s)=2^{Nj}\Phi_0(2^jz,2^{2j}s).
\end{align}
Since $\Phi_j\in{\mathcal S}_{\mathrm{rad}}(\htype)$, we also have
\begin{align}\label{EV3}
\widehat{\Phi_j}(\lambda,m)=\varphi_0(2^{-2j}(2m+d)|\lambda|).
\end{align}
We set
\[
\widetilde{\Phi}_j=\Phi_{j-1}+\Phi_j+\Phi_{j+1},\ j\in{\mathbb Z}.
\]
Then it holds that
\[
\widehat{\Phi_j}(\lambda,m)=\widehat{\Phi_j}(\lambda,m)\widehat{\widetilde{\Phi}_j}(\lambda,m).
\]
Hence we can find that 
\begin{align}
\Phi_j=\Phi_j*\widetilde{\Phi}_j,\ j\in{\mathbb Z}.
\end{align}
For convenience, we  also define the operator $\Delta_j$ by
\[
\Delta_jf:=f*\Phi_j.
\]
%
\section{Proof of the Strichartz estimate}
\noindent

In this section, we prove the Strichartz estimate (Theorem~\ref{H-Strichartz}). 
The proof is based on the duality argument (see, \cite{CW90} and \cite{GV}) 
with dispersive estimate. 
First, we introduce the time decay estimate which is given by Hierro. 
\begin{prop}[{\rm Lemma~5.1 in \cite{Hi2005}}]\label{disp_hierro}
Let $d,p\in \N$ with $p\ge 2$. 
Let $\varphi_0$ is given in {\rm Subsection~\ref{bes_sob_def_sec}} and 
$\Phi_0$ is the kernel of $\varphi_0(\mathcal{L})$ on $\htype$ as in {\rm (\ref{EV3})}. Then, 
there exists $C>0$ such that 
\[
\|e^{it\mathcal{L}}\Phi_0\|_{L^{\infty}}\le C\min\{1,|t|^{-\frac{p-1}{2}}\}\ (\lesssim\ (1+|t|))^{-\frac{p-1}{2}})
\]
holds for any $t\in \R\setminus \{0\}$. 
\end{prop}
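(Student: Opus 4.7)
The plan is to write $e^{it\mathcal{L}}\Phi_0$ explicitly via the spherical Fourier inversion formula (Proposition~\ref{inverse_formula}) together with the identities \eqref{EV2}--\eqref{EV3}, and then analyze the resulting oscillatory integral separately in the short-time regime $|t|\le 1$ and the long-time regime $|t|>1$. Since $\Phi_0\in\mathcal{S}_{\mathrm{rad}}(\htype)$, the inversion formula gives, up to a universal constant,
\[
e^{it\mathcal{L}}\Phi_0(z,s) = C\sum_{m=0}^{\infty}\int_{\mathbb{R}^p} e^{-i\lambda\cdot s}\,e^{it(2m+d)|\lambda|}\,\varphi_0((2m+d)|\lambda|)\, L_m^{(d-1)}\!\left(\tfrac{|\lambda|}{2}|z|^2\right)|\lambda|^d\,d\lambda.
\]
The dilation $u=(2m+d)\lambda$ followed by polar coordinates $u=r\omega$ enables closed-form angular integration via
\[
\int_{S^{p-1}}e^{-ir\omega\cdot\xi}\,d\omega=(2\pi)^{p/2}(r|\xi|)^{1-p/2}J_{p/2-1}(r|\xi|),
\]
reducing each $m$-term to a one-dimensional integral in $r$ with oscillatory factor $e^{itr}$, amplitude supported on $r\sim 1$ by $\varphi_0$, and Laguerre/Bessel coefficients.

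For the short-time bound $\|e^{it\mathcal{L}}\Phi_0\|_{L^\infty}\le C$ (which suffices for $|t|\le 1$), I would view $e^{it\mathcal{L}}\Phi_0$ as the convolution kernel of the spectral multiplier $\psi_t(\mathcal{L})$ with $\psi_t(\lambda):=e^{it\lambda}\varphi_0(\lambda)$, a smooth compactly supported function on $\mathbb{R}_+$ whose Schwartz seminorms are uniformly bounded for $|t|\le 1$. The standard theory of Schwartz multipliers of the sub-Laplacian (\cite{Hul2}, \cite{Fur}) then yields a kernel in $\mathcal{S}_{\mathrm{rad}}(\htype)$ with an $L^\infty$-bound uniform for $|t|\le 1$, giving the trivial bound.

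The heart of the proof is the decay $\|e^{it\mathcal{L}}\Phi_0\|_{L^\infty}\lesssim|t|^{-(p-1)/2}$ for $|t|>1$. After the reductions above, each $m$-summand has the wave-type phase $\phi_m(u):=t|u|-u\cdot s/(2m+d)$ that appears in the $\mathbb{R}^p$-wave kernel at unit frequency. Combining the Bessel asymptotic $|J_{p/2-1}(y)|\lesssim\min\{y^{p/2-1},y^{-1/2}\}$ with integration by parts against $e^{itr}$ on $r\sim 1$ produces the classical wave-equation rate $|t|^{-(p-1)/2}$ for each term, uniformly in $(z,s)$; the assumption $p\ge 2$ is what ensures a strictly positive exponent here.

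The main obstacle is controlling the Laguerre factor $L_m^{(d-1)}\!\bigl(r|z|^2/(2(2m+d))\bigr)$, together with its $r$-derivatives, uniformly in $(r,m,z)$, so that the $m$-series can be summed with enough decay in $m$ without losing the rate in $t$. I would rely on the sharp pointwise estimates for Laguerre polynomials and their derivatives of Szeg\H{o}--Erd\'elyi type (treating the bounded, transition, and exponentially decaying regimes of their argument separately), combined with the $(2m+d)^{-(p+d)}$ prefactor coming from the dilation $u=(2m+d)\lambda$, to obtain a termwise bound summable in $m$ and of total size $|t|^{-(p-1)/2}$ uniformly in $(z,s)$, completing the estimate.
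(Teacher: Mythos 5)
First, be aware that the paper does not prove this proposition at all: it is imported verbatim as Lemma~5.1 of \cite{Hi2005} and used as a black box, the authors' own work beginning with Proposition~\ref{disp_linf_est}, which they deduce from it by scaling. So there is no in-paper proof to compare yours against; what you have written is, in outline, a reconstruction of Del Hierro's original argument: spherical Fourier inversion, reduction of each Laguerre mode (after the dilation $u=(2m+d)\lambda$) to a wave-type oscillatory integral with phase $t|u|-u\cdot s/(2m+d)$ on the unit annulus, the rate $|t|^{-(p-1)/2}$ coming from the rank-$(p-1)$ Hessian of $|u|$, and summation over $m$ using the $(2m+d)^{-(d+p)}$ weight against Laguerre bounds. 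Your short-time bound via Hulanicki-type multiplier theory is fine; even more simply, the crude bound $|L_m^{(d-1)}(\tau)e^{-\tau/2}|\le\binom{m+d-1}{m}\sim m^{d-1}$ together with $|\lambda|\sim(2m+d)^{-1}$ on $\operatorname{supp}\varphi_0((2m+d)|\cdot|)$ gives $\sum_m m^{d-1}(2m+d)^{-d-p}<\infty$ uniformly in $t$.

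Two points in the long-time part need repair or completion. (i) As stated, ``integration by parts against $e^{itr}$'' cannot work by itself: each $r$-derivative of $J_{p/2-1}(r|\xi|)$ costs a factor $|\xi|=|s|/(2m+d)$, which is unbounded over $(z,s)\in\htype$, so the gain of $|t|^{-1}$ per step is no gain at all when $|\xi|\gtrsim|t|$. You must exploit the oscillation of the Bessel factor, writing $J_\nu(y)=y^{-1/2}\bigl(e^{iy}a_+(y)+e^{-iy}a_-(y)\bigr)$ with symbol-class amplitudes, performing non-stationary phase on the combined phase $r(t\pm|\xi|)$ when $|\xi|$ is separated from $|t|$, and observing that in the near-stationary regime $|\xi|\sim|t|$ the prefactor $(r|\xi|)^{1-p/2}(r|\xi|)^{-1/2}\sim|t|^{-(p-1)/2}$ already supplies the claimed decay (the small-argument regime $r|\xi|\lesssim1$ being handled by plain integration by parts). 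Equivalently, run stationary phase directly on $u\mapsto t|u|-u\cdot\xi$ in $\R^p$. (ii) The uniform-in-$m$ control of the Laguerre amplitude and of its derivatives --- each derivative brings a chain-rule factor $\tau=r|z|^2/(2(2m+d))$ times a Laguerre function of shifted index, hence roughly an extra power of $m$ --- is where essentially all the labour of the original proof lies; you name the right tools (Szeg\H{o}--Erd\'elyi regime estimates, the $(2m+d)^{-(d+p)}$ weight) but do not carry out the count showing that the powers of $m$ lost to the $O(p/2)$ integrations by parts are absorbed by the $m^{-p-1}$ termwise size. Until that bookkeeping is done, what you have is a credible plan rather than a proof; citing Lemma~5.1 of \cite{Hi2005}, as the paper does, is the honest alternative.
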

For $t\in \R \setminus\{0\}$, we define the operators $\Delta_{\rm L}(t)$ and $\Delta_{\rm H}(t)$ as
\[
\Delta_{\rm L}(t)f=
\sum_{j<\log_2(1/\sqrt{|t|})}f*\Phi_j,\ \ \ 
\Delta_{\rm H}(t)f=
\sum_{j\ge \log_2(1/\sqrt{|t|})}f*\Phi_j. 
\]

By using Proposition~\ref{disp_hierro}, we obtain the following. 
\begin{prop}\label{disp_linf_est}
Let $d,p\in \N$ with $p\ge 2$. 
There exists $C>0$ such that
\[
\|e^{it\mathcal{L}}u_0\|_{L^{\infty}}\le 
C\left\{\|\Delta_{\rm L}(t)u_0\|_{\dot{B}^{N}_{1,1}}+|t|^{-\frac{p-1}{2}}\|\Delta_{\rm H}(t)u_0\|_{\dot{B}^{N-p+1}_{1,1}}\right\}
\]
holds
for any $t\in \R\setminus \{0\}$ and $u_0\in \dot{B}^{N-p+1}_{1,1}(\htype)$. 
\end{prop}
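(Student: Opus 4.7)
The plan is to combine a Littlewood--Paley decomposition with Proposition~\ref{disp_hierro} rescaled to dyadic frequency $2^j$. First, I decompose $e^{it\mathcal{L}} u_0 = \sum_{j \in \Z} e^{it\mathcal{L}} \Delta_j u_0$. Since the left-invariant vector fields obey $X_i(f * g) = f * X_i g$, both $\mathcal{L}$ and $e^{it\mathcal{L}}$ commute with right-convolution, so using the reproducing identity $\Phi_j = \Phi_j * \widetilde{\Phi}_j$ together with commutativity of convolution of radial functions on $\htype$ one obtains
\[
e^{it\mathcal{L}} \Delta_j u_0 = (\Delta_j u_0) * \bigl(e^{it\mathcal{L}} \widetilde{\Phi}_j\bigr),
\]
and Young's inequality yields $\|e^{it\mathcal{L}} \Delta_j u_0\|_{L^\infty} \le \|\Delta_j u_0\|_{L^1}\, \|e^{it\mathcal{L}} \widetilde{\Phi}_j\|_{L^\infty}$.

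Next, I bound $\|e^{it\mathcal{L}} \widetilde{\Phi}_j\|_{L^\infty}$ by a scaling argument. Writing $\widetilde{\Phi}_j = \Phi_{j-1} + \Phi_j + \Phi_{j+1}$ and using \eqref{phi_j_oomoji_def} in the form $\Phi_{j+k}(g) = 2^{N(j+k)} \Phi_0(\delta_{2^{j+k}} g)$, the homogeneity of $\mathcal{L}$ of degree $2$ under the dilation $\delta_{2^{\ell}}$ implies
\[
e^{it\mathcal{L}}(f \circ \delta_{2^{\ell}})(g) = \bigl(e^{i 2^{2\ell} t \mathcal{L}} f\bigr)(\delta_{2^{\ell}} g),
\]
hence $\|e^{it\mathcal{L}} \Phi_{j+k}\|_{L^\infty} = 2^{N(j+k)} \|e^{i 2^{2(j+k)} t \mathcal{L}} \Phi_0\|_{L^\infty}$. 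Applying Proposition~\ref{disp_hierro} to the right-hand side and summing over $k \in \{-1, 0, 1\}$ (which only alters constants) produces
\[
\|e^{it\mathcal{L}} \widetilde{\Phi}_j\|_{L^\infty} \le C\, 2^{Nj} \min\bigl\{1,\, (2^{2j}|t|)^{-\frac{p-1}{2}}\bigr\}.
\]

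Finally, setting $j_0 := \log_2(1/\sqrt{|t|})$, the minimum equals $1$ for $j < j_0$ and equals $2^{-(p-1)j}|t|^{-(p-1)/2}$ for $j \ge j_0$, so summing over $j$ and splitting at $j_0$ gives
\[
\|e^{it\mathcal{L}} u_0\|_{L^\infty} \le C \sum_{j < j_0} 2^{Nj} \|\Delta_j u_0\|_{L^1} + C |t|^{-\frac{p-1}{2}} \sum_{j \ge j_0} 2^{(N-p+1)j} \|\Delta_j u_0\|_{L^1}.
\]
By Littlewood--Paley almost-orthogonality ($\Phi_k * \Phi_j$ vanishes once $|k-j|$ is sufficiently large, thanks to $\mathrm{supp}\,\varphi_0 \subset [1/4, 4]$) together with the identity $\Delta_j u_0 = \Delta_j \widetilde{\Delta}_j u_0$, the two partial sums are equivalent to $\|\Delta_L(t) u_0\|_{\dot{B}^N_{1,1}}$ and $\|\Delta_H(t) u_0\|_{\dot{B}^{N-p+1}_{1,1}}$ respectively, after absorbing the $O(1)$ boundary blocks near $j = j_0$ into either side. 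The main technical point is correctly handling the anisotropic dilation $\delta_{2^j}(z, \eta) = (2^j z, 2^{2j} \eta)$ so that both the factor $2^{Nj}$ (coming from the homogeneous dimension $N = 2d + 2p$) and the rescaled time $2^{2j} t$ emerge correctly in the scaling step; once this is secured, the remainder of the argument is a routine dyadic bookkeeping.
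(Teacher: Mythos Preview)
Your proof is correct and follows essentially the same route as the paper: reduce to a single dyadic block via the reproducing identity $\Phi_j=\Phi_j*\widetilde{\Phi}_j$ and Young's inequality, then rescale $e^{it\mathcal{L}}\widetilde{\Phi}_j$ to $e^{i2^{2j}t\mathcal{L}}\widetilde{\Phi}_0$ and apply Proposition~\ref{disp_hierro}. The only cosmetic difference is that you invoke the dilation homogeneity $e^{it\mathcal{L}}(f\circ\delta_{2^{\ell}})=(e^{i2^{2\ell}t\mathcal{L}}f)\circ\delta_{2^{\ell}}$ directly, whereas the paper verifies the same scaling identity by an explicit computation with the spherical Fourier transform; also, the paper simply states that the block estimate \eqref{disp_est_decom} suffices, while you spell out the almost-orthogonality argument identifying the partial sums with $\|\Delta_{\rm L}(t)u_0\|_{\dot{B}^N_{1,1}}$ and $\|\Delta_{\rm H}(t)u_0\|_{\dot{B}^{N-p+1}_{1,1}}$.
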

\begin{rem}\label{disp_rem_hierro}
In {\rm \cite{Hi2005}}, 
the similar estimate 
\[
\|e^{it\mathcal{L}}u_0\|_{L^{\infty}}\le C(1+|t|)^{-\frac{p-1}{2}}\|u_0\|_{B^{N-\frac{p-1}{2}}_{1,1}}
\]
is claimed. 
But the proof of this estimate cannot be found in {\rm \cite{Hi2005}}. 
It seems that the regularity loss $N-\frac{p-1}{2}$ 
is not suitable from viewpoint of scaling argument. 
We prove {\rm Proposition~\ref{disp_linf_est}} 
based on scaling argument. 
\end{rem}
\begin{rem}
We note that if $u_0\in \dot{B}^{N-p+1}_{1,1}(\htype)$, then $\Delta_{\rm L}(t)u_0\in \dot{B}^{N}_{1,1}(\htype)$ since
\begin{equation}\label{low_freq_dec_est1}
\|\Delta_{\rm L}(t)u_0\|_{\dot{B}^{N}_{1,1}}\le |t|^{-\frac{p-1}{2}}\|\Delta_{\rm L}(t)u_0\|_{\dot{B}^{N-p+1}_{1,1}}
\end{equation}
holds. Because we can see that
\[
\|\Delta_{\rm L}(t)u_0\|_{\dot{B}^{N-p+1}_{1,1}}+ \|\Delta_{\rm H}(t)u_0\|_{\dot{B}^{N-p+1}_{1,1}}
\sim \|u_0\|_{\dot{B}^{N-p+1}_{1,1}}
\]
by the definition of the Besov norm $($see, {\rm (\ref{241262})}$)$, 
we also obtain
\[
\|e^{it\mathcal{L}}u_0\|_{L^{\infty}}\le 
C|t|^{-\frac{p-1}{2}}\|u_0\|_{\dot{B}^{N-p+1}_{1,1}}
\]
for any $u_0\in \dot{B}^{N-p+1}_{1,1}(\htype)$ from {\rm Proposition~\ref{disp_linf_est} and (\ref{low_freq_dec_est1})}.
\end{rem}
\begin{proof}[Proof of Proposition~\ref{disp_linf_est}]
We note that $j<\log_2(1/\sqrt{|t|})$ is equivalent to $2^{(p-1)j}\le |t|^{-\frac{p-1}{2}}$. 
Therefore, it suffices to show that
\begin{equation}\label{disp_est_decom}
\|\Delta_je^{it\mathcal{L}}u_0\|_{L^{\infty}}
\le C 2^{(N-p+1)j}\min\{2^{(p-1)j},|t|^{-\frac{p-1}{2}}\}\|\Delta_ju_0\|_{L^1}
\end{equation}
for some constant $C>0$ which does not depend on $t$ and $j$. 
Because $\Phi_j=\Phi_j*\widetilde{\Phi}_j$ and $e^{it\mathcal{L}}(f*g)=(e^{it\mathcal{L}}f)*g=f*(e^{it\mathcal{L}}g)$ hold, 
we have 
\[
\begin{split}
\|\Delta_je^{it\mathcal{L}}u_0\|_{L^{\infty}}
=\|(e^{it\mathcal{L}}u_0)*\Phi_j*\widetilde{\Phi}_j)\|_{L^{\infty}}
=\|(u_0*\Phi_j)*(e^{it\mathcal{L}}\widetilde{\Phi}_j)\|_{L^{\infty}}. 
\end{split}
\]
Therefore, by the Young inequality, we get
\[
\|\Delta_je^{it\mathcal{L}}u_0\|_{L^{\infty}}
\lesssim \|u_0*\Phi_j\|_{L^1}\|e^{it\mathcal{L}}\widetilde{\Phi}_j\|_{L^{\infty}}
=\|e^{it\mathcal{L}}\widetilde{\Phi}_j\|_{L^{\infty}}\|\Delta_ju_0\|_{L^1}. 
\]
To obtain (\ref{disp_est_decom}), it suffices to show that
\begin{equation}\label{disp_est_decom_2}
\|\Delta_je^{it\mathcal{L}}\widetilde{\Phi}_j\|_{L^{\infty}}
\le C2^{(N-p+1)j}\min\{2^{(p-1)j},|t|^{-\frac{p-1}{2}}\}. 
\end{equation}
By using {\rm Proposition~\ref{inverse_formula}}, we obtain
\begin{equation}\label{st_linsol_fom}
\left(e^{it\mathcal{L}}\widetilde{\Phi}_j\right)(z,s)=\left(\frac{1}{2\pi}\right)^{d+p}\sum_{m\in \N}
\int_{\R^p}e^{-i\lambda s}\widehat{e^{it\mathcal{L}}\widetilde{\Phi}_j}(\lambda,m)L_m^{(d-1)}\left(\frac{|\lambda|}{2}|z|^2\right)|\lambda|^dd\lambda. 
\end{equation}
By the definition of the spherical Fourier transform {\rm (\ref{sp_ft_def})} 
with scaling argument, it holds
\[
\begin{split}
\widehat{\widetilde{\Phi}_j}(\lambda,m)
&=
\begin{pmatrix}
m+d-1\\
m
\end{pmatrix}^{-1}\disp\int_{\mathbb{R}^{2d+p}}e^{i\lambda s} 2^{Nj}\widetilde{\Phi}_0(2^jz,2^{2j}s)L^{(d-1)}_m\left(\dfrac{|\lambda|}{2}|z|^2\right)dzds\\
&=\begin{pmatrix}
m+d-1\\
m
\end{pmatrix}^{-1}\disp\int_{\mathbb{R}^{2d+p}}e^{i\lambda s} 2^{Nj}\widetilde{\Phi}_0(z,s)L^{(d-1)}_m\left(\dfrac{|2^{-2j}\lambda|}{2}|z|^2\right)2^{-2dj}dz2^{-2pj}ds\\
&=\widehat{\widetilde{\Phi}}_0(2^{-2j}\lambda,m). 
\end{split}
\]
Therefore, by (\ref{EV2}) and (\ref{st_linsol_fom}), we obtain
\[
\begin{split}
&\left(e^{it\mathcal{L}}\widetilde{\Phi}_j\right)(z,s)\\
&=\left(\frac{1}{2\pi}\right)^{d+p}\sum_{m\in \N}
\int_{\R^p}e^{-i\lambda 2^{2j}s}e^{i2^{2j}t(2m+d)|\lambda|}\widehat{\widetilde{\Phi}}_0(\lambda,m)L_m^{(d-1)}\left(\frac{|\lambda|}{2}|2^jz|^2\right)2^{2dj}|\lambda|^d2^{2pj}d\lambda\\
&=2^{Nj}\left(e^{i2^{2j}t\mathcal{L}}\widetilde{\Phi_0}\right)(2^jz,2^{2j}s). 
\end{split}
\]
Hence by Proposition~\ref{disp_hierro}, it holds
\[
\|e^{it\mathcal{L}}\widetilde{\Phi}_j\|_{L^{\infty}}
=2^{Nj}\|e^{i2^{2j}t\mathcal{L}}\widetilde{\Phi_0}\|_{L^{\infty}}
\lesssim 2^{Nj}\min\left\{1,|2^{2j}t|^{-\frac{p-1}{2}}\right\}, 
\]
where the implicit constant does not depend on $t$ and $j$. 
This implies (\ref{disp_est_decom_2}) and proof is complete. 
\end{proof}
To prove the Strichartz estimate, 
we first give the following lemma. 
\begin{lemm}\label{stri_lemm_1}
Let $d,p\in \N$ with $p\ge 2$ 
and $(q,r)$ is non-end point admissible pair. 
We put $\sigma =N(\frac{1}{2}-\frac{1}{r})-\frac{2}{q}$. 
Then, there exists $C>0$ such that
\begin{equation}\label{int_duam_est_stri}
\left\|\int_Ie^{-it\mathcal{L}}F(t)dt\right\|_{L^2}
\le C\|F\|_{L^{q'}_t\dot{B}^{\sigma}_{r',2}}
\end{equation}
holds for any interval $I\subset \R$ and 
$F\in L^{q}(\R;\dot{B}^{\sigma}_{r',2}(\htype))$, 
where
$q'$ and $r'$ denote the conjugate index of $q$ and $r$ respectively. 
\end{lemm}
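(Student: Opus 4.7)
The plan is to prove this estimate via the $TT^*$ duality argument combined with a Littlewood-Paley decomposition, using the dispersive estimate of Proposition~\ref{disp_linf_est} interpolated with the $L^2$-isometry of $e^{it\mathcal{L}}$. Squaring the left-hand side and unfolding the inner product, I would write
\[
\left\|\int_I e^{-it\mathcal{L}} F(t)\,dt\right\|_{L^2}^2 = \int_I \int_I \langle F(t),\, e^{i(t-s)\mathcal{L}} F(s)\rangle \, dt\, ds.
\]
Expanding $F = \sum_j \Delta_j F$ and using that each $\Delta_j$ commutes with $e^{i(t-s)\mathcal{L}}$ (both are spectral multipliers of $\mathcal{L}$) together with the almost-orthogonality $\langle \Delta_j u, \Delta_k v\rangle_{L^2} = 0$ for $|j-k|$ sufficiently large, I would reduce the right-hand side to a sum over $j$ of nearly-diagonal terms, and then majorize each of them via H\"older's inequality in $g$ by $\int_I\int_I \|\Delta_j F(t)\|_{L^{r'}}\|e^{i(t-s)\mathcal{L}}\Delta_j F(s)\|_{L^r}\, dt\, ds$.

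Next, I would interpolate Proposition~\ref{disp_linf_est} with the $L^2$-conservation of $e^{it\mathcal{L}}$ via Riesz--Thorin to obtain the frequency-localized dispersive estimate
\[
\|e^{it\mathcal{L}} \Delta_j u_0\|_{L^r} \le C \min\bigl\{2^{Nj(1-2/r)},\, 2^{(N-p+1)j(1-2/r)}|t|^{-\alpha}\bigr\} \|\Delta_j u_0\|_{L^{r'}},
\]
with $\alpha = (p-1)(\tfrac{1}{2}-\tfrac{1}{r})$. Using the elementary bound $\min\{A,B\} \le A^{1-\beta}B^{\beta}$ with $\beta = (2/q)/\alpha$ (which lies in $[0,1]$ by admissibility), and the algebraic identity $N(1-2/r) = 2\sigma + 4/q$, this majorant collapses pointwise to $C\, 2^{2\sigma j}|t|^{-2/q}$. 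For $q>2$, I would then apply the Hardy--Littlewood--Sobolev inequality in time to bound each $j$-summand by $C \cdot 2^{2\sigma j}\|\Delta_j F\|_{L^{q'}_t L^{r'}_g}^2$. Summing in $j$ and applying Minkowski's integral inequality in the direction $\ell^2_j L^{q'}_t \hookrightarrow L^{q'}_t \ell^2_j$ (valid because $q' \le 2$ whenever $q \ge 2$) then produces $C\|F\|_{L^{q'}_t \dot{B}^{\sigma}_{r',2}}^2$, as desired.

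The main obstacle is the case $q=2$, which is admissible as a non-endpoint pair only when $p>3$ and under the strict condition $\alpha > 1/2$. Here, the Hardy--Littlewood--Sobolev exponent reaches the threshold $2/q=1$, where the sharp inequality fails. To handle this I would not pass through the pointwise majorization $|t|^{-2/q}$; instead, I would keep the full kernel $K_j(t) := \min\bigl\{2^{Nj(1-2/r)},\,2^{(N-p+1)j(1-2/r)}|t|^{-\alpha}\bigr\}$ and apply Young's convolution inequality $L^{q/2} * L^{q'} \hookrightarrow L^{q}$ directly. A scaling argument based on the homogeneity $K_j(t) = 2^{2\sigma j + 4j/q}\widetilde{K}(2^{2j}t)$ for $\widetilde{K}(\tau)=\min\{1,|\tau|^{-\alpha}\}$ shows $\|K_j\|_{L^{q/2}} \le C\, 2^{2\sigma j}$ precisely when $\alpha > 2/q$, which is the strict non-endpoint condition; this yields the same per-frequency bound and closes the argument uniformly across all non-endpoint admissible pairs.
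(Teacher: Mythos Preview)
Your argument is correct and follows the same strategy as the paper: both proceed by the $TT^*$ expansion, Littlewood--Paley localization, the interpolated frequency-localized dispersive bound, and then either Hardy--Littlewood--Sobolev or Young's inequality in time, with Minkowski in $j$ to reassemble the Besov norm. The only difference is cosmetic: the paper splits into the case $\frac{2}{q}<(p-1)\delta(r)$ (handled by Young, since the kernel $w(t)=(2^{-2j}+|t|)^{-(p-1)\delta(r)}$ lies in $L^{q/2}_t$) and the case $\frac{2}{q}=(p-1)\delta(r)$ with $q>2$ (handled by HLS via $w(t)\le |t|^{-2/q}$), whereas you split by $q>2$ versus $q=2$; the underlying estimates coincide. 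One small slip: the strict non-endpoint condition for $q=2$ is $\alpha>1$ (i.e.\ $\alpha>2/q$), not $\alpha>1/2$ --- your subsequent Young-inequality computation uses the correct threshold, so this does not affect the argument.
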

\begin{proof}
Thanks to Mikowski's integral inequality, 
the desired estimate {\rm (\ref{int_duam_est_stri})} follows from 
\begin{equation}\label{int_duam_est_stri_j}
\left\|\int_I\Delta_je^{-it\mathcal{L}}F(t)dt\right\|_{L^2}
\le C2^{\sigma j}\|\Delta_jF\|_{L^{q'}_tL^{r'}_g}. 
\end{equation}
We put $\delta (r):=\frac{1}{2}-\frac{1}{r}$. 
For each $j\in \Z$, 
by the interpolation between {\rm (\ref{disp_est_decom})} 
and the unitarity of $e^{it\mathcal{L}}$, we have
\begin{equation}\label{intep_lr_est}
\|\Delta_je^{it\mathcal{L}}F\|_{L^r}
\lesssim 2^{2(N-p+1)\delta (r)j}\min\left\{2^{2(p-1)\delta (r)j},|t|^{-(p-1)\delta (r)}\right\}\|\Delta_jF\|_{L^{r'}},
\end{equation}
for $r\ge 2$, where implicit constant does not depend on $j$ and $t$. 
We note that
\[
\min\left\{2^{2(p-1)\delta (r)j},|t|^{-(p-1)\delta (r)}\right\}
\lesssim (2^{-2j}+|t|)^{-(p-1)\delta (r)}=:w(t). 
\]
Therefore, by the unitarity of $e^{it\mathcal{L}}$,  the H\"older inequality, and {\rm (\ref{intep_lr_est})} with $t=t_1-t_2$, 
we have 
\[
\begin{split}
\left\|\int_I\Delta_je^{-it\mathcal{L}}F(t)dt\right\|_{L^2}^2
&=\int_I\left(\Delta_jF(t_1),\int_I\Delta_je^{i(t_1-t_2)\mathcal{L}}F(t_2)dt_2\right)_{L^2}dt_1\\
&\le \int_I\|\Delta_jF(t_1)\|_{L^{r'}_g}\left(\int_I\left\|\Delta_je^{i(t_1-t_2)\mathcal{L}}F(t_2)\right\|_{L^r_g}dt_2\right)dt_1\\
&\lesssim 2^{2(N-p+1)\delta (r)j}\int_I\|\Delta_jF(t_1)\|_{L^{r'}_g}\left(\int_Iw(t_1-t_2)\left\|\Delta_jF(t_2)\right\|_{L^{r'}_g}dt_2\right)dt_1\\
&\lesssim 2^{2(N-p+1)\delta (r)j}\|\Delta_jF\|_{L^{q'}_tL^{r'}_g}\left\|\int_Iw(t-t_2)\left\|\Delta_jF(t_2)\right\|_{L^{r'}_g}dt_2\right\|_{L^q_{t}}. 
\end{split}
\]
To obtain {\rm (\ref{int_duam_est_stri_j})}, it suffices to show that
\begin{equation}\label{convol_t_est_y_HLS}
\left\|\int_Iw(t-t_2)\left\|\Delta_jF(t_2)\right\|_{L^{r'}_g}dt_2\right\|_{L^q_{t}}
\lesssim 2^{2\left\{(p-1)\delta (r)-\frac{2}{q}\right\}j}\|\Delta_jF\|_{L^{q'}_tL^{r'}_g}
\end{equation}
because
\[
2(N-p+1)\delta (r)+2\left\{(p-1)\delta (r)-\frac{2}{q}\right\}=2\sigma
\]
holds for any admissible pair $(q,r)$. \\

Now, we prove {\rm (\ref{convol_t_est_y_HLS})}. \\
\\
\noindent \underline{Case\ 1\ :\ $\frac{2}{q}<(p-1)\delta (r)$}\\

We put $\theta:=\frac{q}{2}$ and $a:=\theta (p-1)\delta (r)$. 
Because $|w(t)|^{\theta}=(2^{-2j}+|t|)^{-a}$ and
$a>1$, we have $w\in L^{\theta}(\R)$ and
\begin{equation}\label{w_theta_norm_cal}
\|w\|_{L^{\theta}}=\left(\int_{-\infty}^{\infty}(2^{-2j}+|t|)^{-a}dt\right)^{\frac{1}{\theta}}\lesssim 2^{\frac{2(a-1)}{\theta}j}.
\end{equation}
Furthermore, note that $1+\frac{1}{q}=\frac{1}{\theta}+\frac{1}{q'}$ holds. 
Therefore, by the Young inequality, we obtain
\[
\left\|w*\|\Delta_jf(\cdot)\|_{L^{r'}}\right\|_{L^q}\lesssim \|w\|_{L^{\theta}}\left\|\|\Delta_jf\|_{L^{r'}_g}\right\|_{L^{q'}_t}. 
\]
This estimate and {\rm (\ref{w_theta_norm_cal})} imply {\rm (\ref{convol_t_est_y_HLS})}. \\

\noindent \underline{Case\ 2\ :\ $\frac{2}{q}=(p-1)\delta (r)$ and $q>2$}\\

In this case, we cannot use the Young inequality as in Case\ 1
because $w\in L^{\frac{q}{2}}(\R)$ does not hold. 
However, we note that $w(t)\le |t|^{-\frac{2}{q}}$. 
Therefore, the Hardy-Littlewood-Sobolev inequality yields that
\begin{equation}\label{est_HLS_app}
\left\|w*\|\Delta_jf(\cdot)\|_{L^{r'}}\right\|_{L^q}
\le \left\||\cdot|^{-\frac{2}{q}}*\|\Delta_jf(\cdot)\|_{L^{r'}}\right\|_{L^q}
\lesssim \left\|\|\Delta_jf\|_{L^{r'}_g}\right\|_{L^{q'}_t}
\end{equation}
for $q>2$ and we get {\rm (\ref{convol_t_est_y_HLS})}. 
\end{proof}
\begin{rem}
For the case $\frac{2}{q}=(p-1)\delta (r)$ and $q=2$ 
$($namely, $(q,r)$ is end point$)$, 
we cannot obtain {\rm (\ref{est_HLS_app})} 
because the Hardy-Littlewood-Sobolev inequality 
fails in this case. 
\end{rem}
Here, we prove the Strichartz estimate. 
\begin{proof}[Proof of Theorem~\ref{H-Strichartz}]

We first prove
\begin{equation}\label{stri_besov_lins}
\|e^{it\mathcal{L}}u_0\|_{L^q_t\dot{B}^{-\sigma}_{r,2}}
\le C\|u_0\|_{L^2}
\end{equation}
for non-end point admissible pair $(q,r)$ and $\sigma =N(\frac{1}{2}-\frac{1}{r})-\frac{2}{q}$. 
By {\rm Lemma~\ref{stri_lemm_1}}, it holds that
\[
\left\|\int_{-\infty}^{\infty}e^{-it\mathcal{L}}F(t)dt\right\|_{L^2}\lesssim \|F\|_{L^{q'}_t\dot{B}^{\sigma}_{r',2}}. 
\]
Therefore, if we put $X=L^{q'}_t\dot{B}^{\sigma}_{r',2}$, then we have
\[
\begin{split}
\left|\langle F,e^{it\mathcal{L}}u_0\rangle_{X\times X^*}\right|
&=\left|\int_{-\infty}^{\infty}(F(t),e^{it\mathcal{L}}u_0)_{L^2}dt\right|\\
&=\left| \left(\int_{-\infty}^{\infty}e^{-it\mathcal{L}}F(t)dt,u_0\right)_{L^2}\right|\\
&\le \left\|\int_{-\infty}^{\infty}e^{-it\mathcal{L}}F(t)dt\right\|_{L^2}\|u_0\|_{L^2}
\lesssim \|F\|_{X}\|u_0\|_{L^2}. 
\end{split}
\]
This is equivalent to {\rm (\ref{stri_besov_lins})} 
because $X^*=L^q_t\dot{B}^{-\sigma}_{r,2}$ for $(q,r)\in [2,\infty]\times [2,\infty]$ and $\sigma\in \R$. 

Next, we prove
\begin{equation}\label{stri_besov_duam}
\left\|\int_0^te^{i(t-t')\mathcal{L}}F(t')dt'\right\|_{L^{q_1}_t\dot{B}^{-\sigma_1}_{r_1,2}}
\le C\|F\|_{L^{q_2'}_t\dot{B}^{\sigma_2}_{r_2',2}}
\end{equation}
for non-end point admissible pairs $(q_1,r_1)$ and $(q_2,r_2)$, and $\sigma_k =N(\frac{1}{2}-\frac{1}{r_k})-\frac{2}{q_k}$ $(k=1,2)$. 
By the similar argument as above, we obtain
\[
\left|\left\langle G,\int_0^te^{i(t-t')\mathcal{L}}F(t')dt'\right\rangle_{X\times X^*} \right|
\lesssim \|G\|_{X}\|F\|_{L^1_tL^2_g}
\]
for $X=L^{q_1'}_t\dot{B}^{\sigma_1}_{r_1',2}$. 
This is equivalent to {\rm (\ref{stri_besov_duam})} with $(q_2,r_2,\sigma_2)=(\infty,2,0)$. 
We also obtain
\[
\begin{split}
\left\|\int_0^te^{i(t-t')\mathcal{L}}F(t')dt'\right\|_{L^2_g}
&=\left\|\int_0^te^{-it'\mathcal{L}}F(t')dt'\right\|_{L^2_g}
\lesssim \|F\|_{L^{q_2'}_r\dot{B}^{\sigma_2}_{r_2',2}}
\end{split}
\]
for any $t\in \R$ by {\rm (\ref{stri_lemm_1})} 
and the unitarity of $e^{it\mathcal {L}}$. 
This implies {\rm (\ref{stri_besov_duam})} with $(q_1,r_1,\sigma_1)=(\infty,2,0)$. 
By multiplying {\rm (\ref{intep_lr_est})} by $2^{-\sigma j}$, taking the $l^2$-summation, 
and similar argument as in the proof of {\rm (\ref{convol_t_est_y_HLS})}, we have
\[
\left\|\int_0^te^{i(t-t')\mathcal{L}}F(t')dt'\right\|_{L^q_t\dot{B}^{-\sigma}_{r,2}}
\lesssim \|F\|_{L^{q'}_t\dot{B}^{\sigma}_{r',2}}. 
\]
This is {\rm (\ref{stri_besov_duam})} with $(q_1,r_1,\sigma_1)=(q_2,r_2,\sigma_2)\ (=(q,r,\sigma))$. 
By interpolation between the cases $(q_2,r_2,\sigma_2)=(\infty,2,0)$ and  $(q_2,r_2,\sigma_2)=(q_1,r_1,\sigma_1)$, 
and interpolation between the cases $(q_1,r_1,\sigma_1)=(\infty,2,0)$ and $(q_1,r_1,\sigma_1)=(q_2,r_2,\sigma_2)$, 
we obtain {\rm (\ref{stri_besov_duam})} for general cases
(For the interpolation of Besov spaces on $\htype$, see \cite{Liu}).

Finally, we prove the Strichartz estimates {\rm (\ref{stri_hom_est})} and {\rm (\ref{stri_inhom_est})} 
by using {\rm (\ref{stri_besov_lins})} and {\rm (\ref{stri_besov_duam})}. 
We prove only {\rm (\ref{stri_inhom_est})} since the proof of {\rm (\ref{stri_hom_est})} is similar and more simpler. 
By putting $G(t)=(-\mathcal{L})^{\frac{\sigma_1}{2}}F(t)$, 
we have
\[
\begin{split}
\left\|\int_0^te^{-i(t-t')\mathcal{L}}F(t')dt'\right\|_{L^{q_1}_tL^{r_1}_g}
&=\left\|\int_0^te^{-i(t-t')\mathcal{L}}G(t')dt'\right\|_{L^{q_1}_t\dot{W}^{-\sigma_1,r_1}_g}. 
\end{split}
\]
Therefore, by the second inequality in (\ref{besov_sobo_embed}) 
and {\rm (\ref{stri_besov_duam})}, we obtain
\[
\begin{split}
\left\|\int_0^te^{-i(t-t')\mathcal{L}}F(t')dt'\right\|_{L^{q_1}_tL^{r_1}_g}
&\lesssim \left\|\int_0^te^{-i(t-t')\mathcal{L}}G(t')dt'\right\|_{L^{q_1}_t\dot{B}^{-\sigma_1}_{r_1,2}}
\lesssim \left\|G\right\|_{L^{q_2'}_t\dot{B}^{\sigma_2}_{r_2',2}}. 
\end{split}
\]
We note that
\[
 \left\|G\right\|_{L^{q_2'}_t\dot{B}^{\sigma_2}_{r_2',2}}
 = \left\|F\right\|_{L^{q_2'}_t\dot{B}^{\sigma_1+\sigma_2}_{r_2',2}}. 
\]
Thus, by the first inequality in (\ref{besov_sobo_embed}), we get {\rm (\ref{stri_inhom_est})}. 
\end{proof}
\section{The proof of well-posedness}\label{sec_wp}
\noindent

In this section, we prove the well-posedness of 
the nonlinear Schr\"odinger equation (Theorem~\ref{th1} and~\ref{th_cri}). 
We construct the solution to (\ref{HNLS}) by using the iteration argument. 
For $0<T\le \infty$, 
we define the solution space $X_T^s$ by
\[
X_T^s:=L^{\infty}([0,T);H^s(\htype))\cap L^q((0,T);W^{s-s_*,r}(\htype))
\]
with the norm
\[
\begin{split}
\|u\|_{X^s_T}
&:=
\|u\|_{L^{\infty}([0,T);H^s(\htype))}+\|u\|_{L^q((0,T);W^{s-s_*,r}(\htype))}\\
&=\|\ee_{[0,T)}(t)({\rm Id}+\mathcal{L})^{\frac{s}{2}}u\|_{L^{\infty}_tL^2_g}
+\|\ee_{(0,T)}(t)({\rm Id}+\mathcal{L})^{\frac{s-s_*}{2}}u\|_{L^q_tL^r_g}, 
\end{split}
\]
where $(q,r)$ is a admissible pair, which will be chosen later. 
For $u_0\in H^s(\htype)$, 
we define the functional $\Phi_{u_0}$ on $X^s_T$ by
\[
\Phi_{u_0}[u](t):=e^{it\mathcal{L}}u_0-i\mu \int_0^te^{i(t-t')\mathcal{L}}\left(|u(t')|^{\alpha -1}u(t')\right)dt'.
\]
We remind that $s_c=\frac{N}{2}-\frac{2}{\alpha -1}$ and $s_*=\max\{\frac{N-p+1}{2},\frac{N-2}{2},s_c\}$. 
To get the estimate for $\Phi_{u_0}[u]$, we first prove the following. 
\begin{lemm}\label{adm_exist}
Let $d,p\in \N$ with $p\ge 2$ and $s_*< s<\frac{N}{2}$. 
Then, there exists 
non-end point admissible pair $(q,r)\in (2,\infty)\times (2,\infty)$ such that 
$s_*=N\left(\frac{1}{2}-\frac{1}{r}\right)-\frac{2}{q}$, 
$q>\alpha -1$, and $s-s_* > \frac{N}{r}$ hold. 
\end{lemm}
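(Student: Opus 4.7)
The plan is to parameterize the admissible pairs by the auxiliary variable $\theta := N/r$ and then determine $q$ from the scaling identity $s_* = N(\frac{1}{2} - \frac{1}{r}) - \frac{2}{q}$, which gives $\frac{2}{q} = \frac{N}{2} - \theta - s_*$. With this substitution, every constraint in the statement becomes an elementary linear inequality in $\theta$, so the task reduces to showing that the intersection of the resulting intervals is non-empty.

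Carrying this out, $r \in (2, \infty)$ is equivalent to $\theta \in (0, N/2)$, while $q \in (2, \infty)$ becomes $\theta \in (N/2 - s_* - 1,\, N/2 - s_*)$. The condition $q > \alpha - 1$ is equivalent to $\frac{2}{q} < \frac{2}{\alpha - 1} = \frac{N}{2} - s_c$ and translates to $\theta > s_c - s_*$, which holds automatically because $s_* \ge s_c$ by the definition of $s_*$. The admissibility inequality $\frac{2}{q} \le (p-1)(\frac{1}{2} - \frac{1}{r})$ simplifies to $\frac{N - p + 1}{2} - s_* \le \theta \cdot \frac{N - p + 1}{N}$, whose left-hand side is non-positive since $s_* \ge \frac{N - p + 1}{2}$, so this is also automatic. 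The requirement $s - s_* > N/r$ reads $\theta < s - s_*$ directly, and the non-end point property follows from $q > 2$ by Remark~\ref{endpoint_rem}.

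Intersecting all these constraints, the effective window for $\theta$ is $(\max\{0,\, N/2 - s_* - 1\},\, \min\{N/2 - s_*,\, s - s_*\})$. Since $s < N/2$, the upper bound collapses to $s - s_*$. The crucial observation for the lower bound is that $(N-2)/2$ appears in the very definition of $s_*$, so $s_* \ge (N-2)/2 = N/2 - 1$; hence $N/2 - s_* - 1 \le 0$ and the effective lower bound is $0$. The admissible window reduces to $(0, s - s_*)$, which is non-empty by the hypothesis $s > s_*$, and any $\theta$ from it produces a pair $(q, r)$ with all the required properties.

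I do not anticipate any substantive obstacle; the argument is essentially bookkeeping once the scaling identity is used to eliminate $q$. The one nontrivial input is the inequality $s_* \ge N/2 - 1$, which is guaranteed by the $(N-2)/2$ term in the definition of $s_*$ and which is precisely what allows $q > 2$ to be arranged simultaneously with the admissibility, scaling, and Sobolev-embedding constraints.
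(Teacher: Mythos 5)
Your argument is correct and is essentially the paper's proof in a slightly different packaging: the paper fixes the concrete choice $q=\frac{4}{N-2s+2\delta}$, $r=\frac{N}{s-s_*-\delta}$ for small $\delta>0$, which in your parameterization is just the point $\theta=N/r=s-s_*-\delta$ near the top of your feasible window $(0,s-s_*)$. Both arguments rest on exactly the same inputs — the scaling identity to eliminate one parameter, $s_*\ge\frac{N-p+1}{2}$ for admissibility, $s_*\ge s_c$ for $q>\alpha-1$, and $s>s_*\ge\frac{N-2}{2}$ to get $q>2$ — so no further comment is needed.
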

\begin{proof}
We choose $\delta > 0$ small enough
satisfying $\delta< s-s_*$, 
and put
\begin{equation}\label{adm_q_r_def}
q=\frac{4}{N-2s+2\delta},\ \ \ \ r=\frac{N}{s-s_*-\delta}. 
\end{equation}
Then, we have $q> 2$ by $s> \frac{N-2}{2}$.  
We also note that $r<\infty$ and $s-s_* >\frac{N}{r}$. 
Furthermore, 
direct calculation shows that
\[
s_*=N\left(\frac{1}{2}-\frac{1}{r}\right)-\frac{2}{q}
\]
and therefore we have
\[
\begin{split}
(p-1)\left(\frac{1}{2}-\frac{1}{r}\right)-\frac{2}{q}
&=(p-1)\left(\frac{1}{2}-\frac{1}{r}\right)-\left\{N\left(\frac{1}{2}-\frac{1}{r}\right)-s_*\right\}\\
&> \frac{N-p+1}{r}>0
\end{split}
\]
by $s_*>\frac{N-p+1}{2}$. 
This says that $(q,r)$ is non-end point admissible pair. 
The inequality $q>\alpha -1$ follows from $s>s_c$. 
\end{proof}
The following lemma will be used to treat critical cases. 
\begin{lemm}\label{adm_exist_2}
Let $d,p\in \N$ with $p\ge 2$. 
\begin{itemize}
\item[{\rm (i)}]Assume $p>3$ and $1<\alpha <3$, 
Then $(q,r)=(2,\infty)$ is 
non-end point admissible pair such that $s_*=\frac{N}{2}-\frac{2}{q}$ and $q>\alpha -1$ hold. 
\item[{\rm (ii)}]Assume $p=2$ and $1<\alpha <5$.  
Then $(q,r)=(4,\infty)$ is 
non-end point admissible pair such that $s_*=\frac{N}{2}-\frac{2}{q}$ and $q>\alpha -1$ hold. 
\item[{\rm (iii)}] Assume one of 
\[
{\rm (a)}\ p=2,\ \alpha \ge 5\ \ \ {\rm (b)}\ p=3,\ \alpha>3\ \ \ {\rm (c)}\ p=4,5,6,\cdots,\ \alpha \ge 3
\]
be satisfied. Then $(q,r)=(\alpha-1,\infty)$ is 
non-end point admissible pair such that $s_*=\frac{N}{2}-\frac{2}{q}$ holds. 
\end{itemize}
\end{lemm}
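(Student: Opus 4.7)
The plan is a direct case-by-case verification against Definition~\ref{s-admissible} and the non-end-point characterization in Remark~\ref{endpoint_rem}. In every case the candidate pair has $r=\infty$, so the admissibility condition $\frac{2}{q}\le (p-1)\bigl(\frac{1}{2}-\frac{1}{r}\bigr)$ collapses to $\frac{2}{q}\le \frac{p-1}{2}$, and the scaling identity $s_*=\frac{N}{2}-\frac{2}{q}$ is an arithmetic check once I identify which of $\frac{N-p+1}{2}$, $\frac{N-2}{2}$, $s_c$ realizes the max defining $s_*$ in the given parameter regime.

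For (i), with $(q,r)=(2,\infty)$ and $p>3$, admissibility reads $1\le\frac{p-1}{2}$, which is strict; hence the pair is non-end point, and $(q,r,p)\ne(2,\infty,3)$ since $p>3$. Since $p\ge 4$, one has $\frac{N-p+1}{2}\le\frac{N-2}{2}$, and for $1<\alpha<3$ the exponent $s_c=\frac{N}{2}-\frac{2}{\alpha-1}$ satisfies $s_c<\frac{N-2}{2}$, so $s_*=\frac{N-2}{2}=\frac{N}{2}-\frac{2}{q}$; finally $q=2>\alpha-1$ is immediate from $\alpha<3$. For (ii), with $(q,r)=(4,\infty)$ and $p=2$, admissibility $\frac{1}{2}\le\frac{1}{2}$ holds (with equality), but non-end-point is secured by $q=4>2$. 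Because $p=2$, one has $\frac{N-p+1}{2}=\frac{N-1}{2}>\frac{N-2}{2}$, and $1<\alpha<5$ gives $s_c<\frac{N-1}{2}$; thus $s_*=\frac{N-1}{2}=\frac{N}{2}-\frac{2}{q}$, and $q=4>\alpha-1$.

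For (iii), $(q,r)=(\alpha-1,\infty)$ forces the target identity to read $s_*=s_c$, so I must check in each subcase that $s_c$ dominates the other two candidates. In (a) $p=2$, $\alpha\ge 5$ yields $s_c\ge\frac{N-1}{2}=\frac{N-p+1}{2}$; in (b) $p=3$, $\alpha>3$ gives $s_c>\frac{N-2}{2}=\frac{N-p+1}{2}$; in (c) $p\ge 4$, $\alpha\ge 3$ gives $s_c\ge\frac{N-2}{2}\ge\frac{N-p+1}{2}$. Admissibility $\frac{2}{\alpha-1}\le\frac{p-1}{2}$ follows in (a) from $\alpha\ge 5$, in (b) from $\alpha>3$ (strict), and in (c) from $\frac{p-1}{2}\ge\frac{3}{2}>1\ge\frac{2}{\alpha-1}$. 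The non-end-point condition is verified by noting $q\ge 4>2$ in (a), $q>2$ in (b) (which also rules out $(q,r,p)=(2,\infty,3)$), and in (c) either $q>2$ when $\alpha>3$, or else $q=2$ with $\frac{2}{q}=1<\frac{p-1}{2}$ strict since $p\ge 4$.

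There is no substantive obstacle; this is bookkeeping, and the only mildly delicate point is subcase (iii)(c) with $\alpha=3$, where $q=2$ and one must rely on the strict inequality $\frac{2}{q}<\frac{p-1}{2}$ (rather than on $q>2$) to fall into the non-end-point class, and on the borderline identity $s_c=\frac{N-2}{2}$ to conclude $s_*=s_c$.
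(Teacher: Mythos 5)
Your proof is correct and is essentially the paper's own argument: both identify which of $\frac{N-p+1}{2}$, $\frac{N-2}{2}$, $s_c$ realizes $s_*$ in each parameter regime, read off $q$ from $s_*=\frac{N}{2}-\frac{2}{q}$ with $r=\infty$, and check the admissibility and non-end-point conditions case by case. Your explicit handling of the borderline subcase $p\ge 4$, $\alpha=3$ (where $q=2$ and one must invoke the strict inequality $\frac{2}{q}<\frac{p-1}{2}$) is exactly the point the paper also relies on.
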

\begin{proof}
We choose $(q,r)$ as in {\rm (\ref{adm_q_r_def})} with $s=s_*$ and $\delta =0$. 
Then, $r=\infty$ and we can check that 
\[
s_*=\frac{N}{2}-\frac{2}{q}. 
\]
Furthermore, we have
\[
q=
\begin{cases}
2&{\rm if}\ p>3,\ 1<\alpha <3,\\
4&{\rm if}\ p=2,\ 1<\alpha<5,\\
\alpha -1&{\rm otherwise}.
\end{cases}
\]
because
\[
s_*=
\begin{cases}
\frac{N-2}{2}&{\rm if}\ p>3,\ 1<\alpha <3,\\
\frac{N-1}{2}&{\rm if}\ p=2,\ 1<\alpha<5,\\
s_c&{\rm otherwise}.
\end{cases}
\]
Clearly, it hold
\[
q>\alpha -1,\ \ 
\frac{2}{q}<(p-1)\left(\frac{1}{2}-\frac{1}{r}\right)
\]
for $p>3$, $1<\alpha <3$, and $(q,r)=(2,\infty)$. 
We also have 
\[
q>\max\{2,\alpha-1\},\ \ 
\frac{2}{q}=(p-1)\left(\frac{1}{2}-\frac{1}{r}\right)
\]
for $p=2$, $1<\alpha <5$, and $(q,r)=(4,\infty)$. 
Thus, we obtain the conclusion in {\rm (i)} and {\rm (ii)}. 

Finally, we assume one of {\rm (a)}, {\rm (b)}, and {\rm (c)} in {\rm (iii)} be satisfied.  
Then, we can check that
\[
q>2\ \ {\rm or}\ \ \frac{2}{q}<(p-1)\left(\frac{1}{2}-\frac{1}{r}\right)
\]
for $(q,r)=(\alpha-1,\infty)$. 
Therefore, we obtain the conclusion in {\rm (iii)}. 
\end{proof}
\begin{prop}\label{duam_est_1_prop}
Let $d,p\in \N$ with $p\ge 2$, $T>0$, $\alpha >1$, and $s_*\le s<\frac{N}{2}$. 
Assume that $(q,r)$ is the admissible pair as in 
{\rm Lemma~\ref{adm_exist}} when $s>s_*$, and as in {\rm Lemma~\ref{adm_exist_2}} when $s=s_*$. 
We also assume $\alpha \ge \lceil s\rceil +1$ if $\alpha$ is not an odd integer. 
Then, 
\begin{equation}\label{duam_single_est}
\left\|\int_0^te^{i(t-t')\mathcal{L}}\left(|u(t')|^{\alpha -1}u(t')\right)dt'\right\|_{X^s_T}
\le CT^{1-\frac{\alpha-1}{q}}\|u\|_{X^s_T}
\end{equation}
holds for any $u\in X^s_T$ and
\begin{equation}\label{duam_diff_est}
\begin{split}
&\left\|\int_0^te^{i(t-t')\mathcal{L}}\left(|u(t')|^{\alpha -1}u(t')-|v(t')|^{\alpha -1}v(t')\right)dt'\right\|_{X^s_T}\\
&\ \ \ \ \ \ \ \ \le CT^{1-\frac{\alpha-1}{q}}(\|u\|_{X^s_T}^{\alpha -1}+\|v\|_{X^s_T}^{\alpha -1})\|u-v\|_{X^s_T}
\end{split}
\end{equation}
holds for any $u,v\in X^s_T$. 
In particular, we can choose $T=\infty$ when $s=s_*=s_c$. 
\end{prop}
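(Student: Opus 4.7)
The plan is to apply the inhomogeneous Strichartz estimate (Theorem~\ref{H-Strichartz}(ii)) with two different choices of admissible pairs, one for each of the two components of the $X^s_T$ norm, and reduce both bounds to a single quantity $\||u|^{\alpha-1}u\|_{L^1_T H^s}$ which is then controlled by the fractional Leibniz rule (Proposition~\ref{FLR}) together with Sobolev embedding into $L^\infty$.

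For the $L^\infty_T H^s$-component, I would take $(q_1,r_1)=(q_2,r_2)=(\infty,2)$ in Theorem~\ref{H-Strichartz}(ii), which gives $\sigma_1=\sigma_2=0$; since $(\mathrm{Id}+\mathcal{L})^{s/2}$ commutes with $e^{it\mathcal{L}}$, we obtain
\[
\|\Phi[u]\|_{L^\infty_T H^s}\lesssim \||u|^{\alpha-1}u\|_{L^1_T H^s}.
\]
For the $L^q_T W^{s-s_*,r}$-component, I would take $(q_1,r_1)=(q,r)$ (so that $\sigma_1=s_*$ by Lemma~\ref{adm_exist}) together with $(q_2,r_2)=(\infty,2)$ (so $\sigma_2=0$), which, after commuting $(\mathrm{Id}+\mathcal{L})^{(s-s_*)/2}$ through the Duhamel integral, yields
\[
\|\Phi[u]\|_{L^q_T W^{s-s_*,r}}\lesssim \||u|^{\alpha-1}u\|_{L^1_T H^s}.
\]

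Next I would apply Proposition~\ref{FLR} with the $L^r$-slot taken to be $L^\infty$ (permitted since the hypothesis $\alpha\ge \lceil s\rceil$ is built into the proposition), obtaining the pointwise-in-time bound
\[
\||u(t)|^{\alpha-1}u(t)\|_{H^s}\lesssim \|u(t)\|_{L^\infty}^{\alpha-1}\|u(t)\|_{H^s}.
\]
The Sobolev embedding $W^{s-s_*,r}(\htype)\hookrightarrow L^\infty(\htype)$ — which follows from Remark~\ref{rem_def_space}(iii) combined with Proposition~\ref{embeddings} since the admissible pair of Lemma~\ref{adm_exist} satisfies $s-s_*>N/r$, and is trivial when $r=\infty$ as in Lemma~\ref{adm_exist_2} — then controls $\|u(t)\|_{L^\infty}$ by $\|u(t)\|_{W^{s-s_*,r}}$. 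Integrating in time and applying Hölder's inequality with exponents $\bigl(\tfrac{q}{\alpha-1},\tfrac{q}{q-\alpha+1}\bigr)$ (valid since $q>\alpha-1$) gives
\[
\int_0^T\|u\|_{L^\infty}^{\alpha-1}\,dt\le T^{1-\frac{\alpha-1}{q}}\|u\|_{L^q_T L^\infty}^{\alpha-1}\lesssim T^{1-\frac{\alpha-1}{q}}\|u\|_{X^s_T}^{\alpha-1},
\]
and combining everything produces (\ref{duam_single_est}). The difference estimate (\ref{duam_diff_est}) is handled in exactly the same way, replacing (\ref{FLR_ineq}) by the difference version (\ref{FLR_diff_ineq}), which is precisely why the stronger assumption $\alpha\ge\lceil s\rceil+1$ is imposed.

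In the scaling-critical situation covered by Lemma~\ref{adm_exist_2}(iii), we have $s=s_*=s_c$ and $q=\alpha-1$, whence $T^{1-(\alpha-1)/q}=1$; the estimate becomes scale invariant and extends to $T=\infty$, at the price that the contraction argument in Section~\ref{sec_wp} will then require smallness of initial data. The only nontrivial obstacle I anticipate is verifying that the Sobolev embedding $W^{s-s_*,r}\hookrightarrow L^\infty$ is available uniformly across the sub-cases of Lemmas~\ref{adm_exist} and~\ref{adm_exist_2}; this is really just bookkeeping, since the choice of admissible pair in those lemmas was engineered to make either $s-s_*>N/r$ strict (subcritical) or $r=\infty$ with $s-s_*=0$ (critical).
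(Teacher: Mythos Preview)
Your proposal is correct and follows essentially the same route as the paper: apply the inhomogeneous Strichartz estimate \eqref{Stri_inhom_sob_duam} with $(q_1,r_1)=(\infty,2)$ and $(q_1,r_1)=(q,r)$ against $(q_2,r_2)=(\infty,2)$ to reduce both components of the $X^s_T$ norm to $\||u|^{\alpha-1}u\|_{L^1_TH^s}$, then use Proposition~\ref{FLR} with the $L^\infty$ slot, the embedding $W^{s-s_*,r}\hookrightarrow L^\infty$ (trivial when $r=\infty$), and H\"older in time. The only cosmetic difference is that the paper writes the time H\"older step as $\|u\|_{L^{\alpha-1}_TL^\infty}\le T^{\frac{1}{\alpha-1}-\frac{1}{q}}\|u\|_{L^q_TL^\infty}$, which also covers the endpoint $q=\alpha-1$ directly rather than treating it separately.
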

\begin{proof}
We first prove {\rm (\ref{duam_single_est}) }. 
By The Strichartz estimate {\rm (\ref{Stri_inhom_sob_duam})}
with $(\sigma_1,q_1,r_1)=(\sigma_2,q_2,r_2)=(0,\infty,2)$, 
we have
\begin{equation}\label{duam_hsnorm_est}
\left\|\int_0^te^{i(t-t')\mathcal{L}}\left(|u(t')|^{\alpha -1}u(t')\right)dt'\right\|_{L^{\infty}([0,T);H^s(\htype))}
\lesssim \|\ee_{[0,T)}(t)|u|^{\alpha-1}u\|_{L^1_tH^s_g}. 
\end{equation}
On the other hand, 
by {\rm (\ref{Stri_inhom_sob_duam})} 
with $(\sigma_1,q_1,r_1)=(s_*,q,r)$, $(\sigma_2,q_2,r_2)=(0,\infty,2)$, 
we have
\[
\left\|\int_0^te^{i(t-t')\mathcal{L}}\left(|u(t')|^{\alpha -1}u(t')\right)dt'\right\|_{L^{q}((0,T);W^{s-s_*,r}(\htype))}
\lesssim \|\ee_{[0,T)}(t)|u|^{\alpha-1}u\|_{L^1_tH^s_g}. 
\]
Therefore, it suffices to show that
\begin{equation}\label{duam_est_des}
\|\ee_{[0,T)}(t)|u|^{\alpha-1}u\|_{L^1_tH^s_g}
\lesssim T^{1-\frac{\alpha-1}{q}}\|u\|_{X^s_T}.
\end{equation}

Now, we show (\ref{duam_est_des}).
By using the generalized chain rule ({\rm(\ref{FLR_ineq})} and Remark~\ref{IFLR} ), we obtain
\[
\begin{split}
\|\ee_{[0,T)}(t)|u|^{\alpha-1}u\|_{L^1_tH^s_g}
&\lesssim \|\ee_{[0,T)}(t)|u|^{\alpha-1}\|_{L^1_tL^{\infty}_g}\|\ee_{[0,T)}(t)u\|_{L^{\infty}_tH^s_g}\\
&\lesssim \|\ee_{[0,T)}(t)|u|^{\alpha-1}\|_{L^1_tL^{\infty}_g}\|u\|_{X^s_T}.
\end{split}
\]
Furthermore, by the H\"older inequality, we can see that
\[
\begin{split}
\|\ee_{[0,T)}(t)|u|^{\alpha-1}\|_{L^1_tL^{\infty}_g}
&\le \|\ee_{[0,T)}(t)u\|_{L^{\alpha-1}_tL^{\infty}_g}^{\alpha-1}\\
&\le \left(\|\ee_{[0,T)}(t)\|_{L_t^{\frac{(\alpha -1)q}{q-(\alpha -1)}}}
\|\ee_{[0,T)}(t)u\|_{L^q_tL^{\infty}_g}\right)^{\alpha-1}\\
&=T^{1-\frac{\alpha-1}{q}}\|\ee_{[0,T)}(t)u\|_{L^q_tL^{\infty}_g}^{\alpha-1},
\end{split}
\]
where, we used $q\ge \alpha -1$. 
If $r=\infty$ (namely, $s=s_*$ and $(q,r)$ is as in {\rm Lemma~\ref{adm_exist_2}}), 
then proof is completed 
because $X^{s_*}_T\hookrightarrow L^{q}((0,T);L^{\infty}(\htype))$ holds. 
If $r<\infty$  (namely, $s>s_*$ and $(q,r)$ is as in {\rm Lemma~\ref{adm_exist}}), 
by using the Sobolev embedding $W^{s-s_*,r}(\htype)\hookrightarrow L^{\infty}(\htype)$ 
with $s-s_*>\frac{N}{r}$, 
we get
\[
\|\ee_{[0,T]}(t)u\|_{L^q_tL^{\infty}_g}
\lesssim \|\ee_{[0,T]}(t)u\|_{L^q_tW^{s-s_*,r}_g}
\le \|u\|_{X^s_T}.
\]
As a result, we have (\ref{duam_est_des}). 
We note that $q=\alpha-1$ holds when $s=s_*=s_c$. 
Therefore, we can choose $T=\infty$ when $s=s_*=s_c$.

Similarly as the proof of {\rm (\ref{duam_single_est}) }, 
by applying {\rm (\ref{FLR_diff_ineq})}, we obtain
\[
\begin{split}
&\left\|\int_0^te^{i(t-t')\mathcal{L}}\left(|u(t')|^{\alpha -1}u(t')-|v(t')|^{\alpha -1}v(t')\right)dt'\right\|_{X^s_T}\\
&\lesssim \left(\|\ee_{[0,T)}(t)u\|_{L_t^{\alpha-1}L_g^{\infty}}^{\alpha-1}+\|\ee_{[0,T)}(t)v\|_{L_t^{\alpha-1}L_g^{\infty}}^{\alpha-1}\right)\|\ee_{[0,T)}(t)(u-v)\|_{L^{\infty}_tH^s_g}\\
&\ \ \ \ \ \ +\left(\|\ee_{[0,T)}(t)u\|_{L_t^{\alpha-1}L_g^{\infty}}^{\alpha-2}+\|\ee_{[0,T)}(t)v\|_{L_t^{\alpha-1}L_g^{\infty}}^{\alpha-1}\right)\\
&\ \ \ \ \ \ \ \ \ \ \ \ \times \left(\|\ee_{[0,T)}(t)u\|_{L^{\infty}_tH^s_g}+\|\ee_{[0,T)}(t)v\|_{L^{\infty}_tH^s_g}\right)\|\ee_{[0,T)}(t)(u-v)\|_{L^{\alpha-1}_tL^{\infty}_g}. 
\end{split}
\]
Therefore, we have {\rm (\ref{duam_diff_est})} by the same argument as above. 
\end{proof}
\begin{rem}
We can also obtain
\begin{equation}\label{duam_diff_est_reg0}
\begin{split}
&\left\|\int_0^te^{i(t-t')\mathcal{L}}\left(|u(t')|^{\alpha -1}u(t')-|v(t')|^{\alpha -1}v(t')\right)dt'\right\|_{X^0_T}\\
&\ \ \ \ \ \ \ \ \le CT^{1-\frac{\alpha-1}{q}}(\|u\|_{X^s_T}^{\alpha -1}+\|v\|_{X^s_T}^{\alpha -1})\|u-v\|_{X^0_T}
\end{split}
\end{equation}
by using $||u|^{\alpha-1}u-|v|^{\alpha-1}v|\lesssim (|u|^{\alpha-1}+|v|^{\alpha-1})|u-v|$ 
instead of {\rm (\ref{FLR_diff_ineq})} in the proof of {\rm Proposition~\ref{duam_est_1_prop}}. 
\end{rem}
Now, we give the proof of the local well-posedness. 
\begin{proof}[Proof of Theorems~\ref{th1} and ~\ref{th_cri}]
We assume the assumption (\ref{smooth_cond_nonl}). 
Under the weaker condition $\alpha \ge \lceil s\rceil$, see Remark~\ref{ex_uni_rem}.
By {\rm (\ref{Stri_inhom_sob_lin})}, there exists $C_1>0$ such that
 \[
 \|e^{it\mathcal{L}}u_0\|_{X^s_T}
 \le \|e^{it\mathcal{L}}u_0\|_{L^{\infty}_tH^s_g}+ \|e^{it\mathcal{L}}u_0\|_{L^{q}_tW^{s-s_*,r}_g}
 \le C_1\|u_0\|_{H^s}
 \]
holds for  any $u_0\in H^s(\htype)$.  
Let $\rho>0$ and $u_0\in H^s(\htype)$ with $\|u_0\|_{H^s}\le \rho$. 
We set 
\[
X^s_T(\rho):=\{u\in X^s_T|\ \|u\|_{X^s_T}\le 2C_1\rho\}
\]
and define the metric $d$ on $X^s_T(\rho)$ by
\[
d(u,v):=\|u-v\|_{X^s_T}. 
\] 
Then, $(X^s_T(\rho), d)$ becomes a complete metric space. 
By Proposition~\ref{duam_est_1_prop}, 
we have
\[
\begin{split}
\|\Phi_{u_0}[u]\|_{X^s_T}
&\le \|e^{it\mathcal{L}}u_0\|_{X^s_T}+|\mu|\left\|\int_0^te^{i(t-t')\mathcal{L}}\left(|u(t')|^{\alpha -1}u(t')\right)dt'\right\|_{X^s_T}\\
&\le C_1\|u_0\|_{H^{s}}+CT^{1-\frac{\alpha-1}{q}}\|u\|_{X^s_T}^{\alpha}
\le (1+CT^{1-\frac{\alpha-1}{q}}2^{\alpha}C_1^{\alpha -1}\rho^{\alpha -1})C_1\rho. 
\end{split}
\]
for any $u\in X^s_T(\rho)$ and
\[
\begin{split}
d(\Phi_{u_0}[u],\Phi_{u_0}[v])
&\le CT^{1-\frac{\alpha-1}{q}}(\|u\|_{X^s_T}^{\alpha-1}+\|v\|_{X^s_T}^{\alpha-1})\|u-v\|_{X^s_T}\\
&\le CT^{1-\frac{\alpha-1}{q}}2^{\alpha}C_1^{\alpha-1}\rho^{\alpha-1}d(u,v)
\end{split}
\]
for any $u,v\in X^s_T(\rho)$. 
Therefore, if we choose $T>0$ as 
\[
T^{1-\frac{\alpha-1}{q}}<\frac{1}{C2^{\alpha}C_1^{\alpha-1}\rho^{\alpha-1}}, 
\]
then $\Phi_{u_0}$ is a contraction map on $X^s_T(\rho)$ and 
we can get the unique solution $u\in X^s_T(\rho)$ to $u=\Phi_{u_0}[u]$ on $[0,T]$. 
If $s=s_c$ (then $q=\alpha-1$), we choose $\rho>0$ as 
\[
\rho^{\alpha-1}
<\frac{1}{C2^{\alpha}C_1^{\alpha-1}}. 
\]
Then, we have that $\Phi_{u_0}$ is a contraction map on $X^s_{\infty}(\rho)$. 

The uniqueness in $X^s_T$ and the continuous dependence of the solution map on initial data 
follow from the estimate
\[
\begin{split}
d(u,v)&=d(\Phi_{u_0}[u],\Phi_{v_0}[v])\\
&\le C\|u_0-v_0\|_{H^s}+CT^{1-\frac{\alpha-1}{q}}\left(\|u\|_{X^s_T}^{\alpha-1}+\|v\|_{X^s_T}^{\alpha-1}\right)d(u,v)
\end{split}
\]
for solutions $u$ and $v$ with initial data $u_0$ and $v_0$ respectively. 
\end{proof}
\begin{rem}\label{ex_uni_rem}
If we use the metric
\[
d_0(u,v):=\|u-v\|_{X^0_T}
=\|u-v\|_{L^{\infty}([0,T);L^2(\mathbb{H}^d_p))}
+\|u-v\|_{L^{q}([0,T);W^{-s_*,r}(\mathbb{H}^d_p))}
\]
and the estimate {\rm (\ref{duam_diff_est_reg0})} 
in the Proof of Theorema~\ref{th1} and ~\ref{th_cri}, then
we can apply the contraction mapping principle 
for the complete metric space $(X^s_T,d_0)$ 
without using {\rm (\ref{FLR_diff_ineq})}. 
As consequence, 
we can obtain the existence of unique solution 
to {\rm (\ref{HNLS})} under the weaker assumption 
$\alpha \ge \lceil s\rceil$ than {\rm (\ref{smooth_cond_nonl})} 
if $\alpha$ is not an odd integer. 
The continuity of the data-to-solution map 
from $H^s(\htype)$ to $C([0,T];H^{s-\eta}(\htype))$ 
is also obtained by using the estimate
\[
\|u\|_{H^{s-\eta}(\htype)}\le C\|u\|_{H^s(\htype)}^{1-\frac{\eta}{s}}\|u\|_{L^2(\htype)}^{\frac{\eta}{s}}.
\]
But we omit the details $($see, {\rm \cite{Dinh17}}$)$. 
The above interpolation estimate can be found in {\rm \cite{CRN01} Proposition~31}. 
\end{rem}
\section*{Acknowledgements}
This work was supported by
JSPS KAKENHI Grant Number JP 21K03333.

\noindent
Hiroyuki HIRAYAMA\\
Faculty of Education, University of Miyazaki,\\
1-1, Gakuenkibanadai-nishi, Miyazaki, 889-2192 Japan\\
e-mail: h.hirayama@cc.miyazaki-u.ac.jp\\
\\
\noindent
Yasuyuki OKA\\
School of Liberal Arts and Sciences,  Daido university, \\
10-3 Takiharu-cho, Minami-ku, Nagoya 457-8530 Japan \\
e-mail: y-oka@daido-it.ac.jp


\begin{thebibliography}{99}
%
\bibitem{Bah}
H. Bahouri, P. G$\rm\acute{e}$rard et C. -J. Xu, 
{\it Espaces de Besov et estimations de Strichartz g$\rm\acute{e}$n$\rm\acute{e}$ralis$\rm\acute{e}$es sur le groupe de Heisenberg,} 
J. Anal. Math., {\bf 82} (2000), 93--118.
%
\bibitem{BBG21}
H. Bahouri, D. Barilari, and I. Gallagher, 
{\it Strichartz estimates and Fourier restriction theorems on the Heisenberg group,} 
J. Fourier Anal. Appl. {\bf 27} (2021), no.2,  Paper No. 21, 41pp. 
%
%
\bibitem{BFpre} 
D. Barilari and S. Flynn, 
{\it Refined Strichartz estimates for sub-Laplacians in Heisenberg and $H$-type groups}, 
preprint (arXiv.2501.04415 [math.AP]).
%
\bibitem{Ugu} 
A. Bonfiglioli, E. Lanconelli and F. Uguzzoni, 
{\it Stratified Lie groups and potential theory for their sub-laplacians,} 
Springer-Verlag,\ Berlin,\ Heidelberg, (2007).
%
\bibitem{Bru} 
T. Bruno, M. Peloso, A. Tabacco and M. Vallarino, 
{\it Sobolev spaces on Lie groups: embedding theorems and algebra properties}, 
J. Funct. Anal. {\bf 276} (2019), no. 10, 3014--3050.
%
\bibitem{Bruno2}
T. Bruno, M. Peloso, M. Vallarino,
{\it Besov and Triebel-Lizorkin spaces on Lie groups.}
Math. Ann. {\bf 377} (2020), no. 1-2, 335--377. 
%
\bibitem{Car}D. Cardona and M. Ruzhansky, {\it Multipliers for Besov spaces on graded Lie groups}, C. R. Math. Acad. Sci. Paris {\bf 355} (2017), no. 4, 400--405.
%
\bibitem{CFH11}T. Cazenave, D. Fang, and Z. Han, {\it Continuous dependence for NLS in fractional order spaces}, Ann. Inst. Henri Poincar\'e, C Anal. non Lin\'eaire {\bf 28} (2011), 135--147.
%
%
\bibitem{CW90}T. Cazenave and F. B. Weissler, {\it The Cauchy problem for the critical nonlinear Schr\"odinger equation in $H^s$}, Nonlinear Anal. {\bf 14} (1990), no. 10, 807--836.
%
\bibitem{Cor} L. Corwin and F. P. Greenleaf, 
{\it Representations of Nilpotent Lie Groups and Their Applications: Volume 1, Part 1, Basic Theory and Examples}, Cambridge University Press, Cambridge, (1990).
%
\bibitem{CRN01} T. Coulhon, E. Russ, and V. T. Nachef, 
{\it Sobolev algebras on Lie groups and Riemannian manifolds}, 
Amer. J. Math. {\bf 123} (2001), no.2, 283--342.
%
\bibitem{Cow} 
M. Cowling, A H. Dooley, A Kor{\rm$\dot{a}$}nyi and F. Ricci, 
{\it H-type groups and Iwasawa decompositions,} 
Adv. Math. {\bf {87}} (1991), no. 1, 1--41.
%
\bibitem{Dinh17} 
V. D. Dinh, A, 
{\it Well-posedness of nonlinear fractional Schr\"odinger and wave equations in Sobolev spaces,} 
(2017), hal-01426761v1.
%
%
\bibitem{Fis} 
V. Fischer and M. Ruzhansky, {\it Quantization on nilpotent Lie groups}, 
Progress in Mathematics, Vol. {\bf 314}, Birkhauser, (2016). 
%
\bibitem{Folland2}
G. B. Folland, {\it Subelliptic estimates and function spaces on nilpotent Lie group}, 
Ark. Mat. {\bf 13} (1975), 161--207.
%
%
\bibitem{Fue}
H. F\"uhr and  A. Mayeli, {\it Homogeneous Besov Spaces on Stratified Lie Groups and Their Wavelet Characterization,} 
 J. Funct. Spaces Appl. (2012), Art. ID 523586, 41 pp.
 %
\bibitem{Fur}
G. Furioli, C. Melzi and A. Veneruso,
{\it Littlewood-Paley decompositions and Besov spaces on Lie groups of polynomial growth,}
Math. Nachr. {\bf 279} (2006), no. 9-10, 1028--1040.
%
\bibitem{GV79}
J. Ginibre and G. Velo,
{\it On a class of nonlinear Schr\"odinger equations. I. The Cauchy probrem, general case,}
J. Functional. Analysis. {\bf 32} (1979), no. 1, 33--71.
%
\bibitem{GV92}
J. Ginibre and G. Velo,
{\it Smoothing properties and retarded estimates for some dispersive evolution equations,}
Commun. Math. Phys. {\bf 144} (1992), 163--188.
%
\bibitem{GV}
J. Ginibre and G. Velo,
{\it Generalized Strichartz inequalities for the wave equation,}
J. Funct. Anal. {\bf 133} (1995), no. 1, 50--68.
%
\bibitem{Hi2005}
M. Del Hierro, {\it Dispersive and Strichartz estimates on H-type groups}, 
Studia Math, {\bf 169} (2005), 1--20.
%
\bibitem{Hir}
H. Hirayama and Y. Oka, 
{\it Results of existence and uniqueness for  the Cauchy problem of semilinear heat equations on stratified Lie groups,} 
 J. Differential Equations {\bf 412} (2024), 214--249.
 %
\bibitem{Hu2}G. Hu, 
{\it Besov and Triebel-Lizorkin spaces associated to non-negative self-adjoint operators,}
university of Tokyo, (2015), Ph. D. thesis.  
%
\bibitem{Hu}G. Hu, 
{\it Littlewood-Paley characterization of H$\ddot{o}$lder-Zygmund spaces on stratified Lie groups,}
Czechoslovak Mathematical Journal, {\bf{69}} (144) (2019), 131--159.
%
\bibitem{Hul2} 
A. Hulanicki, {\it A functional calculus for Rockland operators on nilpotent Lie groups,} 
Studia Math. {\bf{78}} (1984), 253--266.
%
%
\bibitem{Kap3} 
A. Kaplan, 
{\it{Fundamental solutions for a class of hypoelliptic PDE generated by composition of quadratic forms,}} 
Trans. Amer. Math. Soc., {\bf 258}, (1980), 147--153.
%
\bibitem{Kap2} 
A. Kaplan and F. Ricci, 
{\it Harmonic Analysis on Groups of Heisenberg Type,} 
Harmonic analysis, Lecture Nothes in Math., {\bf 992}, Springer, Berlin (1983), 416--435. 
%
\bibitem{Kato87} 
T. Kato, 
{\it On nonlinear Schr\"odinger equations,} 
Ann. Inst. H. Poincar\'e Phys. Th\'eor. {\bf 46} (1987), 113--129. 
%
\bibitem{Kato89} 
T. Kato, 
{\it Nonlinear Schr\"odinger equations. Schr\"odinger operators (S\o nderborg, 1988)}, 
Lecture Notes in Phys., {\bf 345} (1989), Springer, Berlin, 218--263. 
%
%
\bibitem{KT98} 
M. Keel and T. Tao, 
{\it Endpont Strichartz estimates,} 
Amer. J. Math. {\bf 120} (1998), no. 5, 955--980. 
%
\bibitem{Liu}
H. Liu and M. Song, 
{\it Strichatz inequalities for the wave equation with the full Laplacian on H-type groups,} 
Abstr. Appl. Anal. (2014), Art. ID 219375, 10 pp.
%
%
\bibitem{Pazy} 
A. Pazy, {\it Semigroups of Linear Operators and Applications to Partial Differential Equations}, 
Applied Mathematical Sciences, {\bf 44}. Springer-Verlag, New York, (1983). 
%
%
\bibitem{Sa1979}
K. Saka, {\it Besov spaces and Sobolev spaces on a nilpotent Lie group}, 
Tohoku Math. J. {\bf 31} (1979), 383--437.
%
\bibitem{Saw}
Y. Sawano, {\it Theory of Besov spaces}, 
Developments in Mathematics, {\bf 56}. Springer, Singapore, (2018).
%
\bibitem{Stein}
E. M. Stein,  
{\it Harmonic Analysis: Real-Variable Methods, Orthogonality, and Oscillatory Integrals,} 
Princeton University Press, Princeton, N. J.,  (1993).
%
\bibitem{Tsu87}
Y. Tsutsumi,  
{\it $L^2$-solutions for nonlinear Schr\"odinger equations and nonlinear groups,} 
Funkcial. Ekvac. {\bf 30} (1987), no. 1, 115--125.
%
\bibitem{UW12}
H. Uchizono and T. Wada, {\it Continuous dependence for nonlinear Schr\"odinger equation in $H^s$}, 
J. Math. Sci. Univ. Tokyo {\bf 19} (2012), no. 1, 57--68. 
%
\bibitem{Var2} 
N. Th. Varopoulos, L. Saloff-Coste and T. Coulhon, 
{ \it Analysis and geometry on groups,} 
Cambridge Tracts in Mathematics, {\bf 100}. Cambridge University Press, Cambridge, (1992). 
%
\bibitem{Ya87}
K. Yajima,  
{\it Existence of solutions for Schr\"odinger evolution equations,} 
Comm. Math. Phys. {\bf 110} (1987), 415--426.
%
\end{thebibliography}
\end{document}